\pgfplotsset{compat=1.12}
\newcommand{\N}{\mathbb{N}}
\newcommand{\Z}{\mathbb{Z}} 
\newcommand{\Q}{\mathbb{Q}} 
\newcommand{\NN}{\mathbb{N}} 
\newcommand{\ZZ}{\mathbb{Z}}
\renewcommand{\epsilon}{\varepsilon} 
\renewcommand{\phi}{\varphi}
\renewcommand{\emph}{\textbf}
\newcommand{\BS}{BS(1,m)}
\newcommand{\BStwo}{BS(1,m^2)}
\DeclareMathOperator{\diam}{diam}
\DeclareMathOperator{\cay}{Cay}
\DeclareMathOperator{\ord}{ord}
\DeclareMathOperator{\lcm}{lcm}
\newcommand{\mynote}[3]{
    \fbox{\bfseries\sffamily\scriptsize#1}
    {\small$\blacktriangleright$\textsf{\textit{\color{#3}{#2}}}$\blacktriangleleft$}}}
\newcommand{\mynote}[3]{}}
\begin{abstract}
  For $0<\alpha\le 1$, we say that a sequence $(X_k)_{k>0}$ of $d$-regular graphs has property $D_\alpha$ if
  there exists a constant $C>0$ such that $\diam(X_k)\ge C\cdot|X_k|^\alpha$. We investigate property
  $D_\alpha$ for arithmetic box spaces of the solvable Baumslag-Solitar groups $BS(1,m)$ (with $m\geq 2$):
  those are box spaces obtained by embedding $BS(1,m)$ into the upper triangular matrices in $GL_2(\Z[1/m])$
  and intersecting with a family $M_{N_k}$ of congruence subgroups of $GL_2(\Z[1/m])$, where the levels $N_k$
  are coprime with $m$ and $N_k|N_{k+1}$. We prove:
  \begin{itemize}
  \item if an arithmetic box space has $D_\alpha$, then $\alpha\le\frac{1}{2}$~;
  \item if the family $(N_k)_k$ of levels is supported on finitely many primes, the corresponding arithmetic
    box space has $D_{1/2}$~;
  \item if the family $(N_k)_k$ of levels is supported on a family of primes with positive analytic primitive
    density, then the corresponding arithmetic box space does not have $D_\alpha$, for every $\alpha>0$.
\end{itemize}
Moreover, we prove that if we embed $BS(1,m)$ in the group of invertible upper-triangular matrices
$T_n(\Z[1/m])$, then every finite index subgroup of the embedding contains a congruence subgroup. This is a
version of the \textit{congruence subgroup property} (CSP).
\end{abstract}
\begin{document}

\title{On arithmetic properties of solvable Baumslag-Solitar groups}

\author{Laurent Hayez}
\affiliation{%
  \institution{Université de Neuchâtel}
  \city{Neuchâtel}
  \country{Switzerland}
}
\email{laurent.hayez@unine.ch}
\authornote{Supported by grant 200021\_188578 of the Swiss National Fund for Scientific Research}

\author{Tom Kaiser}
\affiliation{%
  \institution{Université de Neuchâtel}
  \city{Neuchâtel}
  \country{Switzerland}
}
\email{tomkaiser456@gmail.com}

\author{Alain Valette}
\affiliation{%
  \institution{Université de Neuchâtel}
  \city{Neuchâtel}
  \country{Switzerland}
}
\email{alain.valette@unine.ch}

\maketitle

\section{Introduction}
\label{sec:introduction}

Let $G$ be a finitely generated, residually finite group. If $(H_k)_{k>0}$ is a decreasing sequence of finite
index normal subgroups of $G$, with trivial intersection, and $S$ is a finite generating set of $G$, then the {\it
  box space} $\square_{(H_k)}G$ is the disjoint union of finite Cayley graphs
$$\square_{(H_k)}G=\coprod_{k>0} \cay (G/H_k,S);$$
here by abuse of notation we identify $S$ with its image in $G/H_k$. Changing the generating set $S$ does not
change the coarse geometry of the box space\footnote{In the sense that the two families of graphs are
  quasi-isometrically equivalent, by a family of quasi-isometries with uniform constants.} so we omit $S$ from the notation.

In the dictionary between group-theoretical properties of $G$ and metric properties of $\square_{(H_k)}G$ (see
e.g. \cite{BourAna}), it is natural to look at the behaviour of the diameter of the Cayley graphs $\cay(G/H_k,S)$. Let $0<\alpha\leq 1$. The box space $\square_{(H_k)}G$ satisfies {\it property $D_\alpha$} if there is some constant $C>0$ such that for every $k>0$:
\begin{equation}
  \label{eq:property_D_alpha}
  \diam(\cay(G/H_k,S))\geq C|G/H_k|^\alpha.
\end{equation}

Note that property $D_\alpha$ is a coarse geometry invariant of the box space. The following is known. 

\begin{theorem} 
  \label{thm:property_D_alpha_and_cyclic_groups}
  Let $G$ be a finitely generated, residually finite group.
\begin{enumerate}
\item (see Cor. 1.7 and Lemma 5.1 in \cite{BreuillardTointon}) If some box space of $G$ has property $D_\alpha$, for some $\alpha>0$, then $G$ virtually maps onto $\Z$.
\item (see Theorem 3 in \cite{khukhro2017expanders}) If $G$ maps onto $\Z$, then for every $0<\alpha<1$, there exists a box space of $G$ with property $D_\alpha$.
\item (see Proposition 5 in \cite{khukhro2017expanders}) The group $G$ is virtually cyclic if and only if some (hence any) box space of $G$ has property $D_1$.
\end{enumerate}
\hfill$\square$
\end{theorem}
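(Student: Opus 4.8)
The plan is to prove the three items essentially independently (and assuming, as one should, that $G$ is infinite, so that the quotients $G/H_k$ genuinely grow): part~(2) by an explicit construction of a box space, the easy implication of part~(3) by a growth estimate, and part~(1) together with the hard implication of part~(3) by quoting the structure theory of finite groups of large diameter, which is the one genuinely deep input.

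For part~(2), fix a surjection $\phi\colon G\twoheadrightarrow\Z$ and a finite generating set $S$, and put $M:=\max_{s\in S}|\phi(s)|\ge 1$. By residual finiteness choose a decreasing chain $K_1\supseteq K_2\supseteq\cdots$ of finite-index normal subgroups of $G$ with $\bigcap_k K_k=\{1\}$. Given $0<\alpha<1$, set $H_k:=\phi^{-1}(n_k\Z)\cap K_k$ for a divisibility chain $n_1\mid n_2\mid\cdots$ of positive integers to be specified; each $H_k$ is then normal of finite index, the $H_k$ decrease, and $\bigcap_k H_k\subseteq\bigcap_k K_k=\{1\}$. Since $G/H_k$ surjects onto $\Z/n_k\Z$ and the induced map of Cayley graphs is surjective and $1$-Lipschitz, $\diam(\cay(G/H_k,S))\ge\diam(\cay(\Z/n_k\Z,\phi(S)))\ge\tfrac{n_k-1}{2M}$, whereas $|G/H_k|\le n_k\cdot|G/K_k|$. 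As $\alpha<1$, once $K_k$ (hence $|G/K_k|$) is fixed one can pick $n_k$ a large enough multiple of $n_{k-1}$ with $\tfrac{n_k-1}{2M}\ge\bigl(n_k\,|G/K_k|\bigr)^{\alpha}$, which yields $D_\alpha$; and $n_k\to\infty$ makes the box space genuine. The one delicate point is the order of quantifiers: the levels $n_k$ are chosen last and far larger than the correcting quotients $G/K_k$, so the linear-in-$n_k$ lower bound on the diameter still dominates.

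For the easy implication of part~(3), if $G$ is infinite virtually cyclic it contains $\Z$ with finite index, hence has linear growth $|B_{G,S}(r)|\le C_0\,r$; since $G\to G/H_k$ sends $B_{G,S}(r)$ onto $B_{G/H_k,S}(r)$, taking $r=\diam(\cay(G/H_k,S))$ gives $|G/H_k|\le C_0\,\diam(\cay(G/H_k,S))$, i.e.\ $D_1$. For the converse of part~(3) and for part~(1), the input I would quote is a consequence of the Breuillard--Green--Tao structure theorem, in the form packaged by Breuillard--Tointon: for a fixed generating-set size $d$ and a fixed $\alpha\in(0,1]$ there are constants $K,\delta>0$ such that every $d$-generated finite group $Q$ of sufficiently large order with $\diam(Q)\ge C|Q|^{\alpha}$ has a subgroup $Q'$ of index $\le K$ admitting a cyclic quotient of order $\ge\delta|Q|^{\alpha}$.

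Granting this, part~(1) is a pull-back and pigeonhole argument: writing $G_k=G/H_k$, take $Q_k'\le G_k$ of index $\le K$ with a cyclic quotient of order $\ge\delta|G_k|^{\alpha}\to\infty$, and let $L_k\le G$ be its full preimage, of index $\le K$; since a finitely generated group has only finitely many subgroups of each finite index, a fixed finite-index $L\le G$ equals $L_k$ for infinitely many $k$, and then $L$, hence its abelianization $L^{\mathrm{ab}}$, surjects onto cyclic groups of unbounded order, so $L^{\mathrm{ab}}$ is infinite and $L\twoheadrightarrow\Z$; thus $G$ virtually maps onto $\Z$. For the hard implication of part~(3) I would run the same argument with $\alpha=1$: the cyclic quotient then has order $\ge\delta|G_k|\ge\delta|Q_k'|$, so $Q_k'$, hence $G_k$, contains a cyclic subgroup of index bounded by some $K'$; passing to a fixed finite-index $C\le G$ as before, the quotients $C/(C\cap H_k)$ are cyclic for infinitely many $k$, and $\bigcap_k H_k=\{1\}$ then forces $[C,C]=\{1\}$, so $C\cong\Z^{r}\times T$ with $T$ finite. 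To exclude $r\ge 2$ one uses $D_1$ once more: the induced sublattices $M_k\le\Z^{r}$ are of finite index, decreasing, with trivial intersection, so all successive minima $\lambda_1(M_k)\le\cdots\le\lambda_r(M_k)$ tend to infinity, while with respect to the (fixed) word metric coming from $S$ one has $\diam(\cay(G_k,S))$ comparable to $\lambda_r(M_k)$ and $|G_k|$ comparable, by Minkowski's second theorem, to $\prod_i\lambda_i(M_k)$; so $D_1$ would force $\prod_{i<r}\lambda_i(M_k)$ to stay bounded, which is impossible for $r\ge 2$. Hence $\Z$ (or the trivial group) has finite index in $G$, i.e.\ $G$ is virtually cyclic. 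The only genuinely hard ingredient is the quoted finite-group structure theorem, and I expect pinning down its precise form --- rather than the surrounding pull-back, pigeonhole, and lattice-geometry steps --- to be the main obstacle.
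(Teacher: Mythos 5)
This statement is a background theorem in the paper: it carries no proof at all, only pointers to \cite{BreuillardTointon} (Cor.~1.7 and Lemma~5.1) for part (1) and to \cite{khukhro2017expanders} (Theorem~3 and Proposition~5) for parts (2) and (3). So the only meaningful comparison is with those cited sources, and against that benchmark your reconstruction is essentially correct. Your proof of (2) (intersecting a residual chain $K_k$ with $\phi^{-1}(n_k\Z)$ and choosing $n_k$ last, so that the linear-in-$n_k$ diameter of the cyclic quotient beats $(n_k|G/K_k|)^\alpha$) is exactly the construction of Theorem~3 of \cite{khukhro2017expanders}, and your easy direction of (3) via linear growth is the standard one. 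For (1) you correctly identify that the whole content sits in the Breuillard--Green--Tao/Breuillard--Tointon structure theorem for finite groups of large diameter, quote it as a black box in an essentially accurate form (bounded-index subgroup with a cyclic quotient of order comparable to the diameter), and the surrounding pull-back/pigeonhole argument (finitely many subgroups of each index in a finitely generated group, hence a fixed $L$ with arbitrarily large cyclic quotients, hence $L^{\mathrm{ab}}$ infinite) is sound. Your hard direction of (3), running the same machinery at $\alpha=1$ and then excluding $\Z^r$ with $r\ge 2$ by successive minima and Minkowski's second theorem, also works, though it is heavier than the more elementary growth-type argument in Proposition~5 of \cite{khukhro2017expanders}; what it buys is uniformity with part (1), at the cost of invoking the deep structure theorem where the original avoids it. Two small caveats, both of which you already flag or which are harmless: one must assume $G$ infinite (otherwise $D_\alpha$ holds vacuously for the eventually constant quotients while $G$ does not virtually surject onto $\Z$), and the uniform comparability constants in the lattice step (word-metric diameter of $\Z^r/M_k$ versus $\lambda_r(M_k)$, and $[\Z^r:M_k]$ versus $\prod_i\lambda_i(M_k)$) should be noted to depend only on $r$ and the fixed generating set, which they do.
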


This paper considers the \textit{Baumslag-Solitar groups} $BS(n,m)\;(m,n>0)$ with presentation
$$BS(n,m)=\langle a,t|ta^nt^{-1}=t^m\rangle.$$
They all map onto $\Z$, by $a\mapsto 0,t\mapsto 1$. On the other hand they are known to be residually finite
if and only if $n=1$ or $n=m$: see Theorem C in \cite{Meskin}. It turns out that the solvable Baumslag-Solitar groups $BS(1,m)$, with $m\geq 2$, have
interesting box spaces. Indeed it is well-known that $BS(1,m)$ may be viewed as a semi-direct product
$$BS(1,m)=\Z[1/m]\rtimes\Z$$
where the factor $\Z$ corresponds to the subgroup $\langle t \rangle$ acting by powers of $m$. We may identify this semi-direct product with the following subgroup $G_m$ of
upper triangular matrices in $GL_2(\Z[1/m])$:
$$G_m=\left\{\left(\begin{array}{cc}m^k & r \\0 & 1\end{array}\right):k\in\Z,\;r\in \Z[1/m]\right\}.$$
The isomorphism is obtained by mapping
$a$ to $A = \left(\begin{array}{cc}1 & 1 \\0 & 1\end{array}\right)$ and $t$ to $ T = \left(\begin{array}{cc}m &
    0 \\0 & 1\end{array}\right)$. The associated embedding of $BS(1,m)$ into $GL_2(\Z[ 1/m])$ is called the {\it
  standard embedding}.

\medskip
In $GL_n(\Z[1/m])$ we may define congruence subgroups. Let $N>0$ be coprime with $m$. The {\it
  principal congruence subgroup of level $N$} is the kernel $M_N$ of the reduction modulo $N$:
$$M_N=\ker\left[GL_n(\Z[1/m])\rightarrow GL_n(\Z/N\Z)\right].$$

\begin{definition} If $G$ is any subgroup of $GL_n(\Z[1/m])$, and $N> 0$ is coprime to $m$, then the \textit{congruence subgroup} $G(N)$ in $G$ is
$$G(N):=G\cap M_N.$$
\end{definition}

For a sequence of integers such
that each one divides the next one, one obtains a sequence of nested congruence subgroups, and thus a box space of $BS(1,m)$. Such box spaces deserve to be called {\it
  arithmetic box spaces}. We will study property $D_\alpha$ for the arithmetic box spaces of $\BS$ through the
standard embedding. From Theorem \ref{thm:property_D_alpha_and_cyclic_groups}, we know that for every $0 <
\alpha < 1$, there exists a box space of $\BS$ with property $D_\alpha$, but what about arithmetic box spaces? 
We will prove that box spaces with $D_\alpha$, for $\alpha>\frac{1}{2}$, can be distinguished from arithmetic box spaces by coarse geometry. More precisely:

\begin{theorem}
  \label{thm:properties_arithmetic_box_spaces}
  For any $m \geq 2$, the following statements are true~:
  \begin{enumerate}
  \item if an arithmetic box space $\square_{(G_m(N_k))_k} G_m$
    has property $D_\alpha$, then $\alpha \leq \frac{1}{2}$,
    
  \item there exists an arithmetic box space with property $D_{1/2}$,
    
  \item there exists an arithmetic box space of
    $G_m$ without property $D_\alpha$ for any $\alpha \in ]0, 1/2]$.
  \end{enumerate}
\end{theorem}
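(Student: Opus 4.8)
The plan is to reduce all three parts to one geometric statement about the finite quotients $G_m/G_m(N)$, together with elementary estimates on the multiplicative order $d_N:=\ord_N(m)$ of $m$ modulo $N$. First I would identify the quotients: writing $G_m=\Z[1/m]\rtimes\Z$ with the $\Z$–factor acting by multiplication by $m$, one checks that $G_m(N)=N\Z[1/m]\rtimes d_N\Z$, so that, since $\gcd(m,N)=1$ forces $\Z[1/m]/N\Z[1/m]\cong\Z/N\Z$, there is a canonical isomorphism $G_m/G_m(N)\cong(\Z/N\Z)\rtimes(\Z/d_N\Z)$ in which the generator of the $\Z/d_N\Z$–factor acts on $\Z/N\Z$ by multiplication by $m$. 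In particular $|G_m/G_m(N)|=N\,d_N$, and I would record the two crude bounds $\log_m(N+1)\le d_N\le\varphi(N)<N$ (the lower one because $N\mid m^{d_N}-1$).

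The technical heart is the estimate $\diam\big(\cay(G_m/G_m(N),S)\big)\asymp d_N$, with constants depending only on $m$. Write $\bar a,\bar t$ for the images of the standard generators, so that $\bar a$ generates the $\Z/N\Z$–factor, $\bar t$ the $\Z/d_N\Z$–factor, and $\bar t^{\,i}\bar a^{\,c}\bar t^{\,-i}$ represents the element $(cm^{i},0)$. For the upper bound, every residue $x$ mod $N$ has a base‑$m$ expansion $x\equiv\sum_{i=0}^{d_N-1}c_im^i$ with digits $c_i\in\{0,\dots,m-1\}$ (because $m^{d_N}>N$, so a representative of $x$ lies in $[0,m^{d_N})$); telescoping the product $\prod_i\bar t^{\,i}\bar a^{\,c_i}\bar t^{\,-i}$ yields $(x,0)=\bar a^{\,c_0}\bar t\,\bar a^{\,c_1}\bar t\cdots\bar t\,\bar a^{\,c_{d_N-1}}\bar t^{\,-(d_N-1)}$, a word of length at most $(m-1)d_N+2(d_N-1)$, and a further $\bar t^{\,j}$ reaches any $(x,j)$; hence $\diam\le(m+2)d_N$. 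For the lower bound, the homomorphism $G_m/G_m(N)\to\Z/d_N\Z$, $(x,j)\mapsto j$, kills $\bar a$ and sends $\bar t\mapsto 1$, so any word of length $\ell$ representing $\bar t^{\,\lfloor d_N/2\rfloor}$ has signed $\bar t$–exponent $\equiv\lfloor d_N/2\rfloor\pmod{d_N}$ and of absolute value $\le\ell$, giving $\ell\ge\lfloor d_N/2\rfloor$.

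Given this, the three parts follow by choosing the levels. Property $D_\alpha$ along $(N_k)_k$ (necessarily $N_k\to\infty$) becomes, via $|G_m/G_m(N_k)|=N_kd_{N_k}$ and $\diam\asymp d_{N_k}$, the requirement $d_{N_k}\gtrsim N_k^{\alpha/(1-\alpha)}$ for $\alpha<1$ (and $\alpha=1$ is excluded since $BS(1,m)$ is not virtually cyclic, by Theorem~\ref{thm:property_D_alpha_and_cyclic_groups}(3)); since $d_{N_k}<N_k$ this forces $\alpha/(1-\alpha)\le1$, i.e.\ $\alpha\le\tfrac12$, proving (1). For (2), take $N_k=p^k$ with $p$ an odd prime not dividing $m$: the classical formula $\ord_{p^k}(m)=\ord_p(m)\cdot p^{\,k-s}$ for $k\ge s$, where $p^s\parallel m^{\ord_p(m)}-1$, gives $d_{N_k}\asymp p^k=N_k$, whence $\diam\asymp N_k\asymp|G_m/G_m(N_k)|^{1/2}$ and $D_{1/2}$ holds. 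For (3), let $N_k$ be the product of the distinct primes dividing $\prod_{j=1}^{k}(m^j-1)$ (these are all coprime to $m$, and $N_k\mid N_{k+1}$): every prime factor $p$ of $N_k$ has $\ord_p(m)\le k$, so $d_{N_k}\mid\lcm(1,\dots,k)$ and $\log d_{N_k}\le\log\lcm(1,\dots,k)\sim k$, while Bang–Zsygmondy gives, for all but finitely many $j\le k$, a primitive prime divisor $q_j\equiv1\pmod j$ of $m^j-1$, so $N_k\ge\prod_j q_j\gg k!$ and $\log N_k\gg k\log k$; thus $d_{N_k}=N_k^{o(1)}$, so $\diam\asymp d_{N_k}=N_k^{o(1)}$ while $|G_m/G_m(N_k)|\ge N_k$, ruling out $D_\alpha$ for every $\alpha>0$, a fortiori for $\alpha\in(0,\tfrac12]$.

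The main obstacle is the two–sided diameter estimate $\diam\asymp\ord_N(m)$: once it is available, parts (1)–(3) reduce to arranging that $\ord_{N_k}(m)$ is as large as possible (part 2, i.e.\ $m$ close to a primitive root modulo $N_k$) or as small as possible (part 3) relative to $N_k$. A secondary point worth isolating is the unconditional lower bound $\log N_k\gg k\log k$ in part (3), which is precisely where the existence of primitive prime divisors of $m^j-1$ is used.
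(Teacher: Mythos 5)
Your proof is correct and follows the same overall architecture as the paper: identify $G_m/G_m(N)\cong \Z/N\Z\rtimes_m\Z/\ord_m(N)\Z$, establish the two-sided estimate $\diam\asymp\ord_m(N)$, reduce $D_\alpha$ to the condition $\ord_m(N_k)=\Omega\bigl(N_k^{\alpha/(1-\alpha)}\bigr)$, and then derive (1) from $\ord_m(N)<N$ and (2), (3) from explicit sequences of levels. The differences are in the details, and they are worth noting. For the diameter upper bound you give a direct, self-contained argument via base-$m$ expansions of residues mod $N$; the paper instead quotes the Burillo--Elder word-length estimate for $BS(1,m)$ and pushes it down to the quotient. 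Your route is more elementary and avoids an external citation; the paper's is shorter. For (2) you take $N_k=p^k$ for an odd prime $p\nmid m$ and invoke the lifting-the-exponent formula $\ord_m(p^k)=\ord_m(p)\,p^{k-s}$, whereas the paper takes $N_k=(m^2-1)^k$ and uses its own Lemma on $\ord_m(N^k)$; both give $\ord_m(N_k)=\Omega(N_k)$ and hence $D_{1/2}$. The most substantial divergence is in (3): the paper's choice $N_k=m^{2^k}-1$ makes the computation trivial ($\ord_m(N_k)=2^k$ while $N_k$ grows doubly exponentially), whereas your construction (squarefree products of the primes dividing $\prod_{j\le k}(m^j-1)$) requires Bang--Zsygmondy to get $\log N_k\gg k\log k$ and a bound on $\lcm(1,\dots,k)$ to control the order. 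Your argument is valid but uses considerably heavier number-theoretic input than necessary; the payoff is that it illustrates the density phenomenon of the paper's Theorem~\ref{thm:density_results}(2) (many small prime factors force small relative order), while the paper's example is the minimal-effort witness.
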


If $\square_{(H_k)}G,\,\square_{(H_k')}G$ are two box spaces of the same residually finite group $G$, we say that $\square_{(H_k')}G$ covers $\square_{(H_k)}G$ if $H_k'\subset H_k$ for every $k>0$. In this case $\cay(G/H_k',S)$ is a Galois covering of $\cay(G/H_k,S)$. 

The following proposition bridges Theorem \ref{thm:properties_arithmetic_box_spaces} with part (2) of Theorem \ref{thm:property_D_alpha_and_cyclic_groups}.

\begin{proposition} Fix $\alpha<1$. Any arithmetic box space of $G_m$ is covered by some box space with $D_\alpha$.
\end{proposition}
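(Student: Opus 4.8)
The plan is to take an arithmetic box space $\square_{(G_m(N_k))_k}G_m$, and exhibit a box space of $G_m$ that simultaneously refines it (in the covering sense, so $H_k' \subset G_m(N_k)$) and has property $D_\alpha$. The natural candidate comes from part (2) of Theorem~\ref{thm:property_D_alpha_and_cyclic_groups}: since $G_m = BS(1,m)$ maps onto $\Z$, there is a box space $\square_{(K_j)_j}G_m$ with property $D_\alpha$, obtained in \cite{khukhro2017expanders} from the surjection $G_m \twoheadrightarrow \Z$ by pulling back a suitably sparse sequence of finite-index subgroups of $\Z$. The key point is that such constructions have flexibility: the sequence of levels witnessing $D_\alpha$ can be chosen to grow as fast as we like, so we can interleave it with the arithmetic filtration.

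The main steps I would carry out are as follows. First, recall the precise form of the $D_\alpha$ box space from part (2): it is built from the kernel $L := \ker(G_m \to \Z)$ (here $L = \Z[1/m]$) and a sequence $q_1 \mid q_2 \mid \cdots$ of positive integers, with $K_j$ being the preimage in $G_m$ of $q_j\Z$; choosing the $q_j$ to grow fast enough (e.g. $q_j \ge 2^{j}$ or faster) yields $\diam(\cay(G_m/K_j)) \gtrsim |G_m/K_j|^\alpha$ for any prescribed $\alpha < 1$. Second, I would intersect this with the arithmetic filtration: set $H_k := G_m(N_k) \cap K_{j(k)}$ for a rapidly increasing function $j(k)$ to be chosen. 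These are finite-index normal subgroups of $G_m$ (intersections of such), they are nested since both $(G_m(N_k))_k$ and $(K_j)_j$ are nested and $j$ is increasing, and $\bigcap_k H_k = \{1\}$ because already $\bigcap_k G_m(N_k) = \{1\}$ (the original arithmetic box space is a genuine box space). By construction $H_k \subset G_m(N_k)$, so $\square_{(H_k)_k}G_m$ covers the given arithmetic box space. Third, I must check $\square_{(H_k)_k}G_m$ has $D_\alpha$. Since $H_k \subset K_{j(k)}$, the graph $\cay(G_m/K_{j(k)})$ is a quotient of $\cay(G_m/H_k)$, hence $\diam(\cay(G_m/H_k)) \ge \diam(\cay(G_m/K_{j(k)})) \gtrsim |G_m/K_{j(k)}|^\alpha$; one then only needs $|G_m/K_{j(k)}|^\alpha \gtrsim |G_m/H_k|^{\alpha'}$ for some $\alpha'$ close enough to $\alpha$, equivalently that the index $[G_m : K_{j(k)}]$ is not too much smaller than $[G_m : H_k] \le [G_m : G_m(N_k)]\cdot[G_m:K_{j(k)}]$ --- which is arranged by taking $j(k)$ large enough that $[K_{j(k)} : \cdot]$ dominates, i.e. choosing $q_{j(k)}$ enormous compared to $N_k$.

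Concretely, the quantitative bookkeeping is: $[G_m : G_m(N_k)]$ is bounded by $|GL_2(\Z/N_k\Z)| \le N_k^4$, while $[G_m : K_j] \asymp q_j^2$ (the "area" of a $q_j \times q_j$-type box in $\Z[1/m]\rtimes\Z$, coming from the $\Z$-direction of size $q_j$ and the $L$-direction quotient of comparable size). So if I pick $q_{j(k)} \ge N_k^{100}$, say, then $[G_m : H_k] \le N_k^4 \cdot q_{j(k)}^2 \le q_{j(k)}^{2+\epsilon}$, so a diameter lower bound of order $q_{j(k)}^{2\alpha_0}$ (valid for $\alpha_0$ as close to $1$ as we wish in the \cite{khukhro2017expanders} construction) gives $\diam \gtrsim [G_m:H_k]^{\alpha_0(2)/(2+\epsilon)}$, and this exponent can be pushed above any target $\alpha < 1$. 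The main obstacle --- and the only genuinely delicate point --- is extracting from \cite{khukhro2017expanders} the quantitative statement in the form I need: that for \emph{every} $\alpha < 1$ the witnessing level sequence $(q_j)$ can be taken to grow arbitrarily fast (so that the diameter lower bound survives after multiplying indices by the polynomially-bounded arithmetic factors). I expect this is immediate from their proof (their $q_j$ are essentially free parameters and the diameter is of order $q_j$ while the cardinality is of order $q_j^2$), but it requires citing the construction at the right level of precision rather than just the black-box existence statement; alternatively one can reprove the needed lower bound directly, since $\cay(G_m/K_j)$ retracts onto a cycle of length $q_j$ via the map to $\Z/q_j\Z$.
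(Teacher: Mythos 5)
Your construction is essentially the paper's: the authors set $M_k = N_k\Z[1/m]\rtimes n_k\Z$ with $n_k=\ord_m(N_k)\cdot N_k^D$ and $\frac{D}{D+1}\ge\alpha$, which is exactly your $G_m(N_k)\cap\pi^{-1}(n_k\Z)$ for the projection $\pi\colon G_m\to\Z$, and they obtain the diameter lower bound directly from the cyclic quotient $\Z/n_k\Z$ rather than by quoting \cite{khukhro2017expanders} quantitatively --- precisely the ``alternative'' you offer in your last sentence, so the delicate citation issue you worry about evaporates. The only slip is the bookkeeping $[G_m:\pi^{-1}(q_j\Z)]\asymp q_j^2$: since the kernel of $\pi$ is contained in this preimage, the index is just $q_j$, which only makes the required inequality easier.
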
\label{covering}
\medskip
In addition, 
we study how property $D_\alpha$ for an arithmetic box space depends on the prime factors of the $N$'s in the sequence of congruence subgroups $(M_N)_N$. In fact, if we denote by $D'(P)$ the analytic density of the prime factors (see Section \ref{densityresults}), we prove the following.
\begin{theorem}
  \label{thm:density_results_intro}
  Let $\square_{(G_m(N_k))_k}G_m$ be an arithmetic box space, and let $P$ be the set of prime factors of
  the sequence $(N_k)_k$.
  \begin{enumerate}
  \item If $|P| < + \infty$, then $\square_{(G_m(N_k))_k}G_m$ has $D_{1/2}$~;
  \item If $D'(P) > 0$, then $\square_{(G_m(N_k))_k}G_m$ does not have $D_\alpha$, for every $\alpha>0$.
  \end{enumerate}
\end{theorem}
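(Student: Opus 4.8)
The plan is to reduce both statements to the growth of the multiplicative order $\ord_{N_k}(m)$, and then supply the arithmetic input separately for the two parts. First I would record the structure of the finite quotients: reduction modulo $N$ identifies $G_m/G_m(N)$ with $\Z/N\Z\rtimes\Z/\ord_N(m)\Z$, the cyclic factor acting by multiplication by $m$, with $a\mapsto(1,0)$ and $t\mapsto(0,1)$, so that $|G_m/G_m(N)|=N\cdot\ord_N(m)$. The crucial estimate is
\[
  \tfrac12\,\ord_N(m)\ \le\ \diam\big(\cay(G_m/G_m(N))\big)\ \le\ (m+1)\,\ord_N(m).
\]
The lower bound comes from the quotient homomorphism onto $\Z/\ord_N(m)\Z$ killing $a$ and sending $t\mapsto\pm1$: its image Cayley graph is the $\ord_N(m)$-cycle, and passing to a Cayley-graph quotient cannot increase the diameter. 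For the upper bound, $N\mid m^{\ord_N(m)}-1$ forces $N<m^{\ord_N(m)}$, so every $r\in\Z/N\Z$ has a base-$m$ expansion $r\equiv\sum_{e<\ord_N(m)}c_e m^e$ with $0\le c_e<m$; the word $a^{c_0}t\,a^{c_1}t\cdots a^{c_{\ord_N(m)-1}}t$ then represents $(r,0)$ with length $\le m\cdot\ord_N(m)$, and one further power $t^s$ with $|s|\le\ord_N(m)$ reaches $(r,s)$. Consequently property $D_\alpha$ for the box space is equivalent to a uniform bound $\ord_{N_k}(m)\gg N_k^{\alpha/(1-\alpha)}$; it holds for some $\alpha>0$ iff $\liminf_k \tfrac{\log\ord_{N_k}(m)}{\log N_k}>0$, and it fails for every $\alpha>0$ iff this $\liminf$ equals $0$.

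For part (1), assume $P$ finite. For each $p\in P$ set $w_p:=v_p\!\big(m^{\ord_p(m)}-1\big)$, a constant depending only on $p$ and $m$ (with the usual slightly different normalisation at $p=2$). Then $\ord_{p^a}(m)\ge p^{\,a-w_p}$ for every $a\ge1$, and since $\ord_{p^{v_p(N)}}(m)\mid\ord_N(m)$ we get $v_p(\ord_N(m))\ge v_p(N)-w_p$ for each $p\mid N$, hence $\ord_N(m)\ge\prod_{p\mid N}p^{\,v_p(N)-w_p}=N/W$ with $W:=\prod_{p\in P}p^{\,w_p}$. Using $\ord_{N_k}(m)\le N_k$, this yields $\diam(\cay(G_m/G_m(N_k)))\ge\tfrac12\ord_{N_k}(m)\ge\tfrac{1}{2W}N_k\ge\tfrac{1}{2W}\lvert G_m/G_m(N_k)\rvert^{1/2}$, i.e. $D_{1/2}$.

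For part (2), by the reduction it suffices to produce indices $k_j\to\infty$ along which $\ord_{N_{k_j}}(m)=N_{k_j}^{\,o(1)}$. This is where the hypothesis $D'(P)>0$ enters, through Section~\ref{densityresults}: positive primitive density produces a subsequence of levels which are (essentially) supported on primes of order at most $E_j$ for some $E_j=o(\log N_{k_j})$, so that $\ord_{N_{k_j}}(m)$ divides a bounded multiple of $\lcm(1,\dots,E_j)=e^{(1+o(1))E_j}=N_{k_j}^{\,o(1)}$. Combined with the diameter estimate, $\diam(\cay(G_m/G_m(N_{k_j})))\asymp_m\ord_{N_{k_j}}(m)=\lvert G_m/G_m(N_{k_j})\rvert^{o(1)}$, and since the cardinalities diverge this excludes $D_\alpha$ for every $\alpha>0$.

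The main obstacle is part (2): the diameter–order reduction and the finite case are essentially bookkeeping, but making the analytic density hypothesis interact with the a priori unconstrained prime-power exponents in the levels $N_k$ — that is, showing that positive primitive density forces infinitely many arithmetically tame levels, and pinning down the precise notion of density for which this implication holds — is the real content, carried out in Section~\ref{densityresults}.
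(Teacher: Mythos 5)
Your reduction and your part (1) are correct and essentially the paper's route. The identification $G_m/G_m(N)\simeq\Z/N\Z\rtimes_m\Z/\ord_m(N)\Z$ with $|G_m/G_m(N)|=N\cdot\ord_m(N)$, the two-sided estimate $\diam=\Theta(\ord_m(N))$ (your base-$m$ expansion gives a cleaner upper bound than the paper's appeal to Proposition~\ref{prop:burillo-elder}, but the outcome is the same as Lemma~\ref{lemma:diameter-bounds}), and the equivalence of $D_\alpha$ with $\ord_m(N_k)=\Omega(N_k^{\alpha/(1-\alpha)})$ all match Proposition~\ref{prop:charactD_alpha}. For part (1), your bound $\ord_m(N)\ge N/W$ with $W=\prod_{p\in P}p^{w_p}$, obtained from the standard order-lifting fact $\ord_m(p^a)\ge p^{a-w_p}$ and divisibility of orders, is a correct substitute for Lemma~\ref{lem:lowerboundonratio}, which reaches the same conclusion $\ord_m(N)/N\ge C(m,P)$ via the $\lcm/\gcd$ formula and the function $\eta_N$; both yield $D_{1/2}$.

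Part (2) has a genuine gap. You correctly observe that excluding $D_\alpha$ for every $\alpha>0$ amounts to finding a subsequence with $\ord_{N_{k_j}}(m)=N_{k_j}^{o(1)}$, but the mechanism you propose --- that $D'(P)>0$ forces the levels to be supported on primes $p$ with $\ord_p(m)\le E_j=o(\log N_{k_j})$, so that $\ord_{N_{k_j}}(m)$ divides a bounded multiple of $\lcm(1,\dots,E_j)$ --- is asserted rather than proved, and it does not follow from the hypothesis: positive analytic density constrains which primes occur in $P$, not the multiplicative orders $\ord_m(p)$, and for almost all primes $\ord_m(p)$ is large (e.g.\ exceeds $p^{1/2}$), so there is no reason a positive-density set of level-primes should have small orders. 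Deferring this step to ``Section~\ref{densityresults}'' is circular, since that section is precisely the proof being requested. The paper's actual input is different and elementary: $\ord_m(N)\le\phi(N)=N\prod_{p\mid N}(1-1/p)$, and $D'(P)>0$ forces $\sum_{p\in P}1/p=\infty$, hence $\prod_{p\in P}(1-1/p)=0$ and $\ord_m(N_k)=o(N_k)$. Be aware, though, that by your own (correct) reformulation this totient bound only falsifies $\ord_m(N_k)=\Omega(N_k)$, i.e.\ $D_{1/2}$, whereas ruling out every $D_\alpha$ requires the much stronger $N_k^{o(1)}$ statement; so the key arithmetic step of part (2) is missing from your sketch and is not supplied by simply importing the totient estimate.
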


Of course the choice of the standard embedding raises a natural question: how do the congruence subgroups in
$BS(1,m)$ depend on the choice of the embedding $\rho$? Since congruence subgroups have finite index, this is
related to a form of the {\it congruence subgroup property} (CSP), which we now recall.

\begin{definition} Let $G$ be a subgroup of $GL_n(\Z[1/m])$. We say that $G$ has the CSP if every finite index subgroup in $G$ contains $G(N)$ for some $N>0$.
\end{definition}

Let $T_n(\Z[1/m])$ denote the group of upper triangular matrices in $GL_n(\Z[1/m])$. We shall
prove:

\begin{theorem}
  \label{thm:CSP-upper-triangular-intro}
  For every embedding $\rho:BS(1,m)\rightarrow T_n(\Z[1/m])$, the group $\rho(BS(1,m))$
  has the CSP.
\end{theorem}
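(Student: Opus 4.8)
The plan is to reduce the statement to a statement about a single explicit embedding, namely the standard one. Recall that $BS(1,m)$ is metabelian: it sits in the short exact sequence $1 \to \Z[1/m] \to BS(1,m) \to \Z \to 1$. Given an arbitrary embedding $\rho : BS(1,m) \to T_n(\Z[1/m])$, the image $\rho(BS(1,m))$ is a solvable linear group, so by the Lie–Kolchin theorem (applied over the algebraic closure of the fraction field) it can be conjugated into the upper triangular matrices — but it already is upper triangular; what I really want is to understand the structure of $\rho$ on the normal abelian subgroup $\rho(\Z[1/m])$ and on the cyclic quotient. The element $\rho(a)$ generating (a finite-index subgroup of) the center-like normal part must be a unipotent upper-triangular matrix, since $a$ is an infinite-order element of the abelian normal subgroup $\Z[1/m]$ on which $t$ acts by multiplication by $m$; conjugation by $\rho(t)$ scales $\rho(a)$ in a way that forces the eigenvalues of $\rho(a)$ to be $1$ (the only algebraic numbers stable under repeated multiplication-compatible relations) and the eigenvalues of $\rho(t)$ to be powers of $m$ up to roots of unity. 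So after passing to a finite-index subgroup I may assume $\rho(a)$ is unipotent upper-triangular and $\rho(t)$ is upper-triangular with diagonal entries units in $\Z[1/m]$, i.e. $\pm m^{j}$.

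The key step is then an abstract CSP transfer lemma: if $\rho_1, \rho_2 : BS(1,m) \to GL_n(\Z[1/m])$ are two embeddings such that $\rho_2 = c \circ \rho_1$ for conjugation $c$ by an element $g \in GL_n(\Q)$ (or more generally by $g\in GL_n(\Z[1/N_0 m])$ for some fixed $N_0$), then $\rho_1$ has the CSP iff $\rho_2$ does. Indeed, conjugation by a fixed $g$ sends a congruence subgroup of level $N$ to a subgroup that contains the congruence subgroup of level $N/\gcd$ of everything appearing in the denominators of $g$ and $g^{-1}$; so the two congruence filtrations are cofinal, and "finite index $\supseteq$ some congruence subgroup" is preserved. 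Therefore it suffices to prove the CSP for one embedding in each conjugacy class, and the normal-form discussion of the previous paragraph shows there are only finitely many such classes up to the relevant conjugation — in fact one reduces to a block-triangular model where the unipotent $\rho(a)$ and the diagonal $\rho(t)$ generate a copy of $G_m$ sitting inside a $GL_2$-block, plus commuting data. So the heart of the matter is the CSP for the standard embedding $G_m \subset GL_2(\Z[1/m])$.

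For the standard embedding I would argue directly. Let $H \le G_m$ have finite index $d$. Since $G_m = \Z[1/m] \rtimes \Z$ with $\Z = \langle T\rangle$ acting by $r \mapsto m r$, the subgroup $H \cap \Z[1/m]$ is a finite-index, hence (being a subgroup of $\Z[1/m]$) nonzero subgroup; every nonzero subgroup of $\Z[1/m]$ that is stable under multiplication by $m$ and $m^{-1}$ — and $H\cap\Z[1/m]$ becomes stable after replacing $H$ by a bounded-index subgroup, since some power $T^{d!}$ lies in $H$ and conjugates $H\cap\Z[1/m]$ into itself — contains $N\Z[1/m]$ for $N = [\Z[1/m] : H\cap\Z[1/m]\cdot\Z]$ coprime to $m$. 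Translating $\{r : N \mid r \text{ in } \Z[1/m]/\Z\}$-type conditions back into matrix form, $N\Z[1/m]$ in the unipotent part together with the full $\langle T\rangle$ (or $\langle T^{d!}\rangle$, which itself contains the $T$-part of $G_m(N)$ after a further index adjustment) is exactly $G_m(N)$ up to finite index; chasing the constants gives $G_m(N') \subseteq H$ for a suitable $N'$ coprime to $m$. I expect the main obstacle to be bookkeeping: making the reductions "pass to a finite-index subgroup" compatible so that at the end one still lands inside $H$ rather than merely inside a finite-index overgroup, and handling the action of $\rho(t)$ on the full flag (not just a $2\times 2$ block) so that the unipotent radical's congruence structure is controlled — this is where the hypothesis that everything is upper triangular, rather than a general solvable linear group, is essential.
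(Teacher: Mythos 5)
Your proposal has two genuine gaps, and they are precisely the two points where the paper has to work hardest.

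First, the claim that the relation $tat^{-1}=a^m$ forces the eigenvalues of $\rho(a)$ to be $1$ is false when $m$ is odd. Since conjugation inside $T_n$ preserves the diagonal, one only gets $\Delta(\rho(a))^{m-1}={\bf 1}_n$, and the torsion of $D_n(\Z[1/m])$ is $\{\pm1\}^n$; for $m$ even this forces $\Delta(\rho(a))={\bf 1}_n$, but for $m$ odd the diagonal entries of $\rho(a)$ may be $-1$. This case really occurs: twist the standard embedding of $BS(1,3)$ by the sign character $a\mapsto -1$, $t\mapsto 1$ to get an injective $\rho'$ with $\Delta(\rho'(a))=-{\bf 1}_2$. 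You propose to fix this by passing to a finite-index subgroup (e.g.\ $\langle a^2,t\rangle$), but CSP for a finite-index subgroup $H\le G$ does not give CSP for $G$ unless you also show that $H$ contains a congruence subgroup of $G$ — otherwise you only conclude that finite-index subgroups of $G$ contain congruence subgroups \emph{of $H$}, which is weaker. Establishing that containment is a nontrivial number-theoretic step (the paper proves that any unit $s\in\Z[1/m]^\times$ has odd order modulo infinitely many $N$ coprime to $m$, and uses this to trap a congruence subgroup of $\rho(G_m)$ inside $\rho(\langle a^2,t\rangle)$); in general the paper instead covers $G_m$ by three subgroups $\langle a^2,t\rangle$, $\langle a^2,at\rangle$, $\langle a,t^2\rangle$ and uses a covering lemma for CSP. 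None of this is present, or replaceable by a one-line remark, in your sketch.

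Second, the reduction of a general (even unipotent-on-$a$) embedding into $T_n$ to the standard $2\times 2$ embedding ``plus commuting data'' via conjugation is not justified and I do not believe it holds. The image $\rho(\Z[1/m])$ is some rank-one subgroup of $U_n(\Z[1/m])$ spread over several superdiagonals, and the relevant difficulty is not the conjugacy class of the pair $(\rho(a),\rho(t))$ but how $\rho(\Z[1/m])$ sits inside its isolator in $U_n(\Z[1/m])$ (the paper must prove that the isolator quotient has finite exponent) and how congruence conditions detect $r$-th powers in $U_n$ and in $\Delta(\rho(G_m))\le D_n(\Z[1/m])$ (Formanek- and Chevalley-type lemmas). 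Your direct argument for the standard embedding in $GL_2$ is fine in spirit, but it uses that the unipotent part of $G_m$ is literally all of $N\Z[1/m]$ in a single matrix entry, which is exactly what fails for a general embedding into $T_n$. So the heart of the theorem is not reached by your reduction.
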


This result can be reformulated as follows. Taking the congruence subgroups $\rho(G_m)(N)$
as a neighbourhood basis for the identity gives a topology on $\rho(G_m)$ (namely the congruence
topology relative to $\rho$) and the completion $\overline{\rho(G_m)}$ with respect to this topology is a profinite group called the congruence completion of $\rho(G_m)$. As $G_m$ is residually finite, it also embeds in its profinite completion $\widehat{G_m}$
which maps onto $\overline{\rho(G_m)}$. The CSP for $\rho(G_m)$ says that the map $\widehat{G_m}\rightarrow\overline{\rho(G_m)}$ is an isomorphism.

Another immediate consequence of CSP is that, if $\rho:G_m\rightarrow T_{n}(\Z[1/m])$ is an embedding, then any box space of $G_m\simeq\rho(G_m)$ is covered by some arithmetic box space of $\rho(G_m)$.

\medskip
When $m=p^\ell$ is a prime power, we also propose another approach to CSP for $\BS$: we view $BS(1,p^\ell)$ as a subgroup of the affine group $\mathbb{G}(\Q)$, and this subgroup is commensurable to the group $\mathbb{G}(\mathcal{O}_S)$ of $S$-integer points, with $S=\{p,\infty\}$. We prove:

\begin{proposition} Let $\mathbb{H}$ be a $\Q$-algebraic subgroup of $GL_n$, and let $\rho:\mathbb{G}\rightarrow\mathbb{H}$ be a $\Q$-isomorphism. Then $\rho(BS(1,p^\ell))$ has CSP.
\end{proposition}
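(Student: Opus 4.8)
The plan is to reduce the statement to the congruence subgroup property for the standard embedding (Theorem~\ref{thm:CSP-upper-triangular-intro}), using that a $\Q$-isomorphism of $\mathbb{G}$ transports the congruence structure of its $S$-arithmetic subgroups. Since $\mathbb{G}$ is the affine group, $BS(1,p^\ell)$ is a finite-index subgroup of $\mathbb{G}(\mathcal{O}_S)$ with $S=\{p,\infty\}$, and the standard embedding $\iota\colon\mathbb{G}\hookrightarrow GL_2$ realizes $BS(1,p^\ell)$ inside $GL_2(\Z[1/p])$, where it has the CSP by Theorem~\ref{thm:CSP-upper-triangular-intro}. The isomorphism $\rho$ provides a second faithful $\Q$-representation of $\mathbb{G}$, namely $\rho\colon\mathbb{G}\to\mathbb{H}\subseteq GL_n$; I would argue that it carries the congruence structure, hence the CSP, from $\iota(BS(1,p^\ell))$ to $\rho(BS(1,p^\ell))$.

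In detail: first, since $\rho(BS(1,p^\ell))$ is finitely generated it lies in $GL_n(\Z[1/m])$ for some integer $m$ (a multiple of $p$), and one works with congruence subgroups relative to this ring. Second -- the key step -- one compares the congruence subgroups coming from $\iota$ with those coming from $\rho$: because $\Q[\mathbb{G}]$ is generated as a $\Q$-algebra by the matrix entries of any faithful representation together with the inverse of its determinant, each entry of $\rho(\gamma)$ is a polynomial with coefficients in $\Z[1/c]$, for a fixed integer $c$, in the entries of $\iota(\gamma)$ and $\det\iota(\gamma)^{-1}$, and symmetrically. Reducing these identities modulo a level $N$ prime to $c$ gives $\iota(\gamma)\equiv 1\pmod N\iff\rho(\gamma)\equiv 1\pmod N$ for all $\gamma$, so the two families of congruence subgroups coincide -- under the identification $\iota(BS(1,p^\ell))\cong BS(1,p^\ell)\cong\rho(BS(1,p^\ell))$ -- at every such level. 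Third, one invokes Theorem~\ref{thm:CSP-upper-triangular-intro}: every finite-index subgroup of $BS(1,p^\ell)$ contains a congruence subgroup for the standard embedding, hence one for $\rho$; this is the CSP for $\rho(BS(1,p^\ell))$.

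The main obstacle is the second step, namely making precise that the congruence topology is an invariant of the $S$-arithmetic group rather than of the embedding, and in particular controlling the finitely many primes dividing $c$ (or introduced in the denominators of $\rho$) so that passing between realizations does not change which finite-index subgroups are captured by congruence subgroups. The cleanest formulation is in terms of congruence completions: a $\Q$-isomorphism $\rho\colon\mathbb{G}\to\mathbb{H}$ induces an isomorphism of congruence completions compatible with profinite completions, so the congruence kernel -- whose triviality is exactly the CSP -- is preserved. An alternative, self-contained route sidesteps Theorem~\ref{thm:CSP-upper-triangular-intro}: using $\mathbb{G}(\mathcal{O}_S)=\mathcal{O}_S\rtimes\mathcal{O}_S^*$, one checks that $\mathbb{G}_a$ and $\mathbb{G}_m$ have the CSP for elementary reasons ($\mathcal{O}_S=\Z[1/p]$ and $\mathcal{O}_S^*\cong\Z/2\times\Z$, whose finite-index subgroups visibly contain congruence ones), and then controls the extension: for a finite-index $\Gamma\le\mathbb{G}(\mathcal{O}_S)$ the subgroup $\Gamma\cap\mathcal{O}_S$ and the image of $\Gamma$ in $\mathcal{O}_S^*$ are congruence subgroups there, and a number-theoretic argument (e.g.\ using primitive prime divisors of $p^r-1$ to fix the order of $p$ modulo the level) produces a single level $N$, coprime to $p$, with $\mathbb{G}(\mathcal{O}_S)(N)\subseteq\Gamma$; CSP for $\rho(BS(1,p^\ell))$ then follows as before by transport along $\rho$.
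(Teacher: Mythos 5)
Your argument is correct in substance and has the same skeleton as the paper's proof: establish CSP for one faithful realization of $\mathbb{G}$, then transport the congruence topology along the $\Q$-isomorphism $\rho$, using that the matrix entries of $\rho(\gamma)$ (resp.\ of $\rho^{-1}$) are polynomials over $\Z[1/c]$ in the entries of the other realization and the inverse of the determinant, so that the two families of congruence subgroups are cofinal in each other. The two ingredients are sourced differently, though. For the base case the paper deliberately does \emph{not} go through Theorem~\ref{thm:CSP-upper-triangular-intro}: it uses the classical fact (Raghunathan) that the $S$-arithmetic group $\mathbb{G}(\mathcal{O}_S)$, $S=\{p,\infty\}$, has CSP together with $[\mathbb{G}(\mathcal{O}_S):BS(1,p^\ell)]=2\ell<\infty$ --- the point of this subsection being to give a route to CSP independent of the $T_n$ machinery --- whereas your main route re-uses Theorem~\ref{thm:CSP-upper-triangular-intro} for the standard embedding (your sketched alternative via $\mathcal{O}_S\rtimes\mathcal{O}_S^\times$ is the one closer in spirit to the paper). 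For the transport step, the paper simply cites Lemma 3.1.1.(ii) in Chapter I of Margulis's book, which is precisely the statement you single out as the main obstacle; your polynomial-identity argument is the standard proof of that lemma. One caveat: your claim of an equivalence $\iota(\gamma)\equiv 1\pmod N\iff\rho(\gamma)\equiv 1\pmod N$ at the \emph{same} level is more than you need and more than you should assert without care about which ring the reduction takes place in; all that is required (and all Margulis's lemma provides) is that for every $N$ there exists $M$, which may be taken coprime to $m$ after absorbing the finitely many primes dividing $c$ into $m$, with $\rho(\gamma)\equiv 1\pmod M\Rightarrow\iota(\gamma)\equiv 1\pmod N$. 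With that weakening your proof goes through.
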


As for the structure of the paper, we will start in Section \ref{sec:some-properties-multiplicative-order} by
recalling some well-known facts about elementary number theory, especially concerning the
multiplicative order of $m$ in $\Z/N\Z$. 

We then study property $D_\alpha$ for solvable Baumslag-Solitar groups in Section
\ref{sec:prop-d_alpha-solv}. We begin with some metric aspects, where we estimate the diameter of the
arithmetic box spaces of $BS(1,m)$, which is what we use to prove
Theorem \ref{thm:properties_arithmetic_box_spaces} and Proposition \ref{covering} (see Theorem \ref{thm:theorem-principal-1}). To conclude
Section \ref{sec:prop-d_alpha-solv}, we investigate the role of prime numbers in property $D_\alpha$ and we
prove Theorem \ref{thm:density_results_intro} (see Theorem \ref{thm:density_results}).

Finally, in Section \ref{sec:csp-bs1-m}, we prove Theorem \ref{thm:CSP-upper-triangular-intro}. 

\medskip
We will use Landau's notations $O,o,\Omega,\Theta$.


\section{Some elementary number theory}
\label{sec:some-properties-multiplicative-order}

We gather some technical lemmas. First we recall some facts about greatest common divisors and lowest common multiples, which will be
respectively denoted by $\gcd$ and $\lcm$.

\begin{proposition}
  \label{prop:properties_gcd_lcm}
  Let $\mu, a_1, \ldots , a_n \in \N^\ast$,
  \begin{enumerate}
  \item $\gcd(\mu \cdot a_1, \mu \cdot a_2) = \mu \cdot  \gcd(a_1, a_2)$,
  \item $\gcd(a_1, \ldots , a_n) = \gcd(\gcd(a_1, \ldots , a_{n-1}), a_n)$,
  \item $\gcd(a_1, \gcd(a_2, a_3)) = \gcd(\gcd(a_1,a_2), a_3)$,
  \item\label{item:4} $\gcd(a_1, a_2) \cdot \lcm(a_1, a_2) = a_1 \cdot a_2$,
  \item $\lcm(a_1, ..., a_n) = \lcm(\lcm(a_1, \ldots , a_{n-1}), a_n)$.
  \hfill$\square$
  \end{enumerate}
\end{proposition}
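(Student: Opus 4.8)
The plan is to reduce all five identities to elementary properties of $\min$ and $\max$ by passing to prime factorizations. For a prime $p$ and $a\in\N^\ast$, write $v_p(a)$ for the exponent of $p$ in the factorization of $a$; recall that $a$ is uniquely determined by the finitely supported family $(v_p(a))_p$, and that for $a,b\in\N^\ast$ one has $v_p(\gcd(a,b))=\min(v_p(a),v_p(b))$ and $v_p(\lcm(a,b))=\max(v_p(a),v_p(b))$ for every prime $p$ — this is immediate from the characterization of $\gcd(a,b)$ as the positive generator of the ideal $a\Z+b\Z$ and of $\lcm(a,b)$ as the positive generator of $a\Z\cap b\Z$, and may alternatively be taken as a definition. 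For $n$ arguments, I define $v_p(\gcd(a_1,\dots,a_n))=\min_i v_p(a_i)$ and $v_p(\lcm(a_1,\dots,a_n))=\max_i v_p(a_i)$.

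With this dictionary in place, each claim becomes a primewise statement about $\min$ and $\max$. For (1), fix a prime $p$ and put $e=v_p(\mu)$; then $v_p(\mu a_i)=e+v_p(a_i)$, and the identity $\min(e+x,e+y)=e+\min(x,y)$ yields $v_p(\gcd(\mu a_1,\mu a_2))=e+\min(v_p(a_1),v_p(a_2))=v_p(\mu\cdot\gcd(a_1,a_2))$; as this holds for all $p$, the two integers coincide. Items (2), (3) and (5) are nothing but the associativity of the binary operations $\min$ and $\max$, namely $\min(\min(x,y),z)=\min(x,y,z)$ and the mirror statement for $\max$, read off coordinatewise. For (4), the elementary identity $\min(x,y)+\max(x,y)=x+y$ gives, for every prime $p$,
\[
v_p(\gcd(a_1,a_2))+v_p(\lcm(a_1,a_2))=v_p(a_1)+v_p(a_2)=v_p(a_1a_2),
\]
and since the left-hand side is exactly $v_p\bigl(\gcd(a_1,a_2)\cdot\lcm(a_1,a_2)\bigr)$, I conclude $\gcd(a_1,a_2)\cdot\lcm(a_1,a_2)=a_1a_2$.

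Nothing here is genuinely hard: the whole proposition is a bookkeeping exercise once the valuation formulas are available. The only point meriting a line of care is the equivalence between the valuation description of $\gcd$ and $\lcm$ and their ideal-theoretic definitions; and if one wishes to avoid invoking unique factorization, item (4) can instead be proved directly from Bézout's identity by verifying that $a_1a_2/\gcd(a_1,a_2)$ is a common multiple of $a_1$ and $a_2$ that divides every common multiple. In the write-up I would simply cite the valuation formulas as standard and keep the argument to the few lines above.
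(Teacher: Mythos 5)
Your proof is correct. The paper states these five identities as standard facts and offers no proof of its own (the proposition is simply closed with a $\square$), so there is no argument to compare against; your reduction to primewise $\min$/$\max$ identities via $p$-adic valuations is the standard justification and covers all five items completely.
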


The following lemma generalises Proposition \ref{prop:properties_gcd_lcm}.(\ref{item:4}).

\begin{lemma}
  \label{lem:lcm=prod_divided_by_gcd}
  Let $a_1, \ldots , a_n \in \N$, then
  \begin{equation}
    \label{eq:lcm=prod_divided_by_gcd}
    \lcm(a_1, \ldots , a_n) = \frac{a_1 \cdots a_n}{\gcd(a_1 \cdots a_{n-1}, a_1 \cdots a_{n-2}a_n, \ldots ,
      a_2 \cdots a_{n})}.
  \end{equation}
\end{lemma}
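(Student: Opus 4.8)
The plan is to prove the identity by comparing $p$-adic valuations on both sides, one prime at a time. Both sides of \eqref{eq:lcm=prod_divided_by_gcd} are positive integers once we assume, as we may for all the applications in this paper, that every $a_i$ is $\geq 1$; so it is enough to check that for each prime $p$ the two sides have the same exponent $v_p$, using the elementary rules $v_p(\lcm(b_1,\dots,b_n))=\max_i v_p(b_i)$, $v_p(\gcd(b_1,\dots,b_n))=\min_i v_p(b_i)$ and $v_p(b_1\cdots b_n)=\sum_i v_p(b_i)$.

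First I would fix a prime $p$, set $e_i:=v_p(a_i)$ and $\sigma:=\sum_{i=1}^n e_i$. The numerator $a_1\cdots a_n$ then has $v_p$ equal to $\sigma$. The denominator on the right of \eqref{eq:lcm=prod_divided_by_gcd} is the gcd of the $n$ products obtained by omitting exactly one of the factors; the product omitting $a_i$ has valuation $\sigma-e_i$, so the denominator has valuation $\min_{1\le i\le n}(\sigma-e_i)=\sigma-\max_i e_i$. Hence the right-hand side has valuation $\sigma-(\sigma-\max_i e_i)=\max_i e_i=v_p(\lcm(a_1,\dots,a_n))$, and since $p$ was arbitrary the identity follows.

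I do not expect a genuine obstacle here; the only points that need care are purely bookkeeping: interpreting the denominator correctly (it is the gcd ranging over all $\binom{n}{n-1}=n$ products of $n-1$ of the $a_i$'s, not a single nested $\gcd$), and noting that the statement is only meaningful when at most one of the $a_i$ vanishes — if two or more are $0$ the right-hand side degenerates to $0/0$, so implicitly one reads $a_1,\dots,a_n\in\N^\ast$ as in Proposition~\ref{prop:properties_gcd_lcm}.

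If one instead prefers a proof from the stated propositions only, I would induct on $n$: the base case $n=2$ is exactly Proposition~\ref{prop:properties_gcd_lcm}.(\ref{item:4}); for the step, write $\lcm(a_1,\dots,a_n)=\lcm(\lcm(a_1,\dots,a_{n-1}),a_n)$ by Proposition~\ref{prop:properties_gcd_lcm}.(5), apply item~(\ref{item:4}) and the induction hypothesis, and then use item~(1) to pull the common factor $D:=\gcd(a_1\cdots a_{n-2},\dots,a_2\cdots a_{n-1})$ back inside a gcd, rewriting $\gcd\!\big(a_1\cdots a_{n-1},\,D\,a_n\big)$ via commutativity and associativity of $\gcd$ as the gcd of all $n$ products of $n-1$ of $a_1,\dots,a_n$. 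This last rewriting is the one delicate computation on that route.
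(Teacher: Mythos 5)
Your primary argument (comparing $p$-adic valuations) is correct and complete: for a fixed prime $p$, with $e_i=v_p(a_i)$ and $\sigma=\sum_i e_i$, the numerator has valuation $\sigma$, the denominator --- being the gcd of the $n$ products each omitting one $a_i$ --- has valuation $\min_i(\sigma-e_i)=\sigma-\max_i e_i$, so the quotient has valuation $\max_i e_i=v_p(\lcm(a_1,\dots,a_n))$. This is a genuinely different route from the paper, which instead proceeds by induction on $n$: it writes $\lcm(a_1,\dots,a_{n+1})=\lcm(\lcm(a_1,\dots,a_n),a_{n+1})$, applies the two-variable identity of Proposition~\ref{prop:properties_gcd_lcm}.(\ref{item:4}) together with the induction hypothesis, and then uses the distributivity $\gcd(\mu a,\mu b)=\mu\gcd(a,b)$ and associativity of $\gcd$ to absorb the factor $\gcd(\Pi_n)$ and recover the gcd of all $n+1$ products of $n$ of the $a_i$'s --- exactly the ``one delicate computation'' you flag in your alternative sketch, which is essentially the paper's proof. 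The valuation argument is shorter and makes the combinatorial content (max versus sum minus min) transparent, at the cost of invoking unique factorization rather than staying inside the small toolkit of Proposition~\ref{prop:properties_gcd_lcm}; the paper's induction is self-contained given that toolkit but requires the careful gcd bookkeeping in the last two lines of its computation. Your remark that the statement should be read with $a_i\in\N^\ast$ (to avoid a $0/0$ denominator when two or more $a_i$ vanish) is a fair and correct observation about the statement as printed.
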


\begin{proof}
  We use induction to show that Eq.~(\ref{eq:lcm=prod_divided_by_gcd}) is valid. If $n = 1$, the
  formula holds. Thus assume that the formula is true for $n \in \N$, and denote by $\Pi_n$ the set
  $\{a_1 \cdots a_{n-1}, a_1 \cdots a_{n-2}a_n, \ldots , a_2 \cdots a_{n}\}$. Using that 
  \[ \gcd \left( \lcm(a_1, \ldots , a_n), a_{n+1} \right) = \gcd \left( \frac{a_1 \cdots a_n}{\gcd(\Pi_n)},
      a_{n+1} \right), \]
  we obtain by a direct computation that
  \begin{align*}
    \lcm(a_1, \ldots , a_{n+1})
    &= \lcm(\lcm(a_1, \ldots , a_n), a_{n+1})\\
    &= \frac{\lcm(a_1, \ldots , a_n) \cdot a_{n+1}}{\gcd \left( \lcm(a_1, \ldots , a_n), a_{n+1} \right)}\\
    &= \frac{a_1 \cdots a_n \cdot a_{n+1}}{\gcd(\Pi_n) \gcd \left( \frac{a_1 \cdots a_n}{\gcd(\Pi_n)},
      a_{n+1} \right)}\\
    &= \frac{a_1 \cdots a_{n+1}}{\gcd(a_1 \cdots a_{n}, \gcd(\Pi_n) \cdot a_{n+1})}\\
    &= \frac{a_1 \cdots a_{n+1}}{\gcd(a_1 \cdots a_n, \gcd(a_1 a_3 \cdots a_n a_{n+1}, \ldots , a_2 \cdots
      a_{n} a_{n+1}))}\\
    &= \frac{a_1 \cdots a_{n+1}}{\gcd(a_1 \cdots a_n, a_1 a_3 \cdots a_n a_{n+1}, \ldots, a_2 \cdots
      a_{n} a_{n+1})}. \qedhere
  \end{align*}

\end{proof}

We now recall the ideal structure of the ring $\Z[1/m]$.
\begin{lemma}\label{prop:reduction_mod_N_is_nice}
Let $I$ be a proper subgroup in $\Z[1/m]$. TFAE:
\begin{enumerate}
    \item $I$ is an ideal in $\Z[1/m]$;
    \item There exists $N>1$ such that $I=N\Z[1/m]$.
\end{enumerate}
Moreover $\Z[1/m]/N\Z[1/m]\simeq\Z/N\Z$
\end{lemma}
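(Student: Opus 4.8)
The plan is to establish the equivalence by its two obvious implications, using only the localisation structure of $\Z[1/m]$ (not the heavier fact that it is a PID), and then to read off the quotient isomorphism from the first isomorphism theorem. The implication $(2)\Rightarrow(1)$ needs no argument, since $N\Z[1/m]$ is by construction the principal ideal generated by $N$. For $(1)\Rightarrow(2)$, the key point is that an ideal of $\Z[1/m]$ is determined by its trace on $\Z$: put $N\Z:=I\cap\Z$, which is an ideal of $\Z$ because $I$ is an additive subgroup stable under multiplication by integers, so $N\ge 0$ is well defined. Then $I=N\Z[1/m]$: the inclusion $\supseteq$ holds because $N\in I$ and $I$ absorbs multiplication by $\Z[1/m]$; for $\subseteq$, write an arbitrary $x\in I$ as $x=a/m^k$ with $a\in\Z$, $k\ge 0$, observe that $a=m^k x\in I\cap\Z=N\Z$, and conclude $x=(a/N)\cdot N\in N\Z[1/m]$. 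Since $I$ is proper and nonzero (the ideal $\{0\}$ being a trivial exception to clause $(2)$ that one should either exclude or flag), we get $N\ne 0,1$, hence $N>1$; moreover one may harmlessly replace $N$ by the product of the prime powers in its factorisation that are coprime to $m$, since the remaining prime factors of $N$ are units of $\Z[1/m]$ and drop out.

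For the final assertion the one thing to watch is that it genuinely needs $\gcd(N,m)=1$ — otherwise one only gets $\Z[1/m]/N\Z[1/m]\cong\Z/N_0\Z$ with $N_0$ the coprime-to-$m$ part of $N$ — which is exactly the normalisation made above and matches the running hypothesis on levels in the paper. Assuming $\gcd(N,m)=1$, I would consider the composite ring homomorphism $\Z\to\Z[1/m]\to\Z[1/m]/N\Z[1/m]$, the first map being the inclusion and the second the quotient. Its kernel is $\Z\cap N\Z[1/m]=N\Z$ (again using $\gcd(N,m)=1$, which forces any integer lying in $N\Z[1/m]$ to be divisible by $N$), and it is surjective because $m$ is invertible modulo $N$, so one can solve $cm^k\equiv a\pmod N$ for $c\in\Z$, giving $a/m^k\equiv c$ modulo $N\Z[1/m]$. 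The first isomorphism theorem then yields $\Z[1/m]/N\Z[1/m]\cong\Z/N\Z$.

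There is no genuinely difficult step here. The points that demand care are the bookkeeping that an ideal generator can be taken to be a positive integer (clearing denominators, which are units in $\Z[1/m]$), the normalisation that makes $N$ coprime to $m$ so that the quotient comes out as $\Z/N\Z$ rather than as a proper quotient of it, and the degenerate ideal $I=\{0\}$, which is the lone exception to clause $(2)$ as literally stated.
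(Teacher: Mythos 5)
Your proof is correct, and it takes a more self-contained route than the paper on both halves of the statement. For the classification of ideals, the paper simply invokes a general localisation theorem (Proposition 2 in section 11.3 of Cohn) giving the bijection $I\mapsto I\cap\Z$ between ideals of $\Z[1/m]$ and ideals $J\subset\Z$ with $m$ not a zero-divisor in $\Z/J$; you instead prove the relevant instance directly by setting $N\Z=I\cap\Z$ and clearing denominators, which is elementary and arguably preferable here. (One small remark: with that definition $N$ is \emph{automatically} coprime to $m$ --- if a prime $p$ divided both $N$ and $m$ then $N/p\in I\cap\Z$ would contradict minimality --- so your final normalisation step is harmless but redundant.) For the quotient, the paper uses the second isomorphism theorem via $\Z+N\Z[1/m]=\Z[1/m]$, while you apply the first isomorphism theorem to the composite $\Z\to\Z[1/m]/N\Z[1/m]$; these are the same computation in different clothing, and your explicit surjectivity check (solving $cm^k\equiv a\pmod N$) is exactly the verification that $\Z+N\Z[1/m]=\Z[1/m]$, which the paper leaves as an observation. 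You also correctly flag two imprecisions that the paper glosses over: the zero ideal is a proper subgroup and an ideal but is not of the form $N\Z[1/m]$ with $N>1$, and the identity $\Z\cap N\Z[1/m]=N\Z$ used in the ``Moreover'' part genuinely requires $\gcd(N,m)=1$ (otherwise the quotient is $\Z/N_0\Z$ for the coprime-to-$m$ part $N_0$ of $N$); both caveats are consistent with the paper's running conventions on levels but deserve the mention you give them.
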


\begin{proof} The non-trivial direction follows from general results about localisations of rings, viewing $\Z[1/m]$ as the localisation of $\Z$ with respect to powers of $m$: the map $I\rightarrow I\cap\Z$ provides a bijection between ideals of $\Z[1/m]$ and ideals $J$ in $\Z$ such that $m$ is not a zero-divisor in $\Z/J$ (see Proposition 2 in section 11.3 of \cite{Cohn}).
Finally observing that $\Z+N\Z[1/m]=\Z[1/m]$, we have by a classical isomorphism theorem:
\[ \Z[1/m] / N\Z[1/m] = ( \Z + N\Z[1/m] ) / N\Z[1/m] \simeq \Z / ( \Z \cap N\Z[1/m] ) = \Z / N\Z.\]
\end{proof}

Lemma \ref{prop:reduction_mod_N_is_nice} allows us to work with $\Z/N\Z$, which has a familiar ring
structure. We will write $\Z/N\Z^\times$ for the multiplicative group of $\Z/N\Z$. We denote by $\ord_m(N)$ the multiplicative order of $m$ in
$\Z/N\Z^\times$. We define the following function.

\begin{definition}\label{definitionnu}
   Let $m, N \in \N$ be such that $\gcd(m,N) = 1$. 
    Write $m^{\ord_m(N)} = \mu N + 1$ for some $\mu \in \N$ and let the function
    $\eta_N \colon \N^\ast \to \N$ be defined by
    \[ \eta_N(k) = \begin{cases} 1 & \text{if } k = 1\\ \frac{N^{k-1}}{\gcd(\mu, N)} & \text{if }k \geq
        2. \end{cases}  \]
\end{definition}

  \begin{lemma}
    \label{lem:orders}
        Let $m, N \in \N$ be such that $\gcd(m,N) = 1$. Write $m^{\ord_m(N)} = \mu N + 1$ for some $\mu \in \N$.  
      Then $\ord_m(N^2) = \ord_m(N) \cdot\frac{N}{\gcd(\mu, N)}$, and more generally
    \begin{equation}
      \label{eq:orders}
      \ord_m(N^k) = \ord_m(N) \cdot \eta_N(k), \quad \forall k \geq 1.
    \end{equation}
  \end{lemma}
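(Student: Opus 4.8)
The plan is to track how the multiplicative order grows one power of $N$ at a time, the engine being a single binomial expansion.

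First I would isolate the inductive step. Fix $j\geq 1$, put $d_j=\ord_m(N^j)$, and write $m^{d_j}=1+\mu_jN^j$ with $\mu_j\in\N$ (so $\mu_1=\mu$). A power of $m$ congruent to $1$ modulo $N^{j+1}$ is in particular congruent to $1$ modulo $N^j$, so $d_{j+1}:=\ord_m(N^{j+1})$ is a multiple of $d_j$; write $d_{j+1}=d_jt$ with $t\geq 1$ minimal. Expanding,
\[
m^{d_jt}=(1+\mu_jN^j)^t=1+t\mu_jN^j+\binom{t}{2}\mu_j^2N^{2j}+\cdots,
\]
and since $2j\geq j+1$ for $j\geq 1$, every term past the linear one is divisible by $N^{j+1}$. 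Hence $m^{d_jt}\equiv 1+t\mu_jN^j\pmod{N^{j+1}}$, so $N^{j+1}\mid m^{d_jt}-1$ exactly when $N\mid t\mu_j$, that is, when $\frac{N}{\gcd(\mu_j,N)}\mid t$. The least such $t$ is $\frac{N}{\gcd(\mu_j,N)}$, which yields
\[
\ord_m(N^{j+1})=\ord_m(N^j)\cdot\frac{N}{\gcd(\mu_j,N)}\qquad(j\geq 1).
\]
For $j=1$ this is exactly the first assertion $\ord_m(N^2)=\ord_m(N)\cdot\frac{N}{\gcd(\mu,N)}$.

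Next I would iterate. Telescoping gives
\[
\ord_m(N^k)=\ord_m(N)\cdot\prod_{j=1}^{k-1}\frac{N}{\gcd(\mu_j,N)}=\ord_m(N)\cdot\frac{N^{k-1}}{\gcd(\mu_1,N)\cdots\gcd(\mu_{k-1},N)},
\]
so, comparing with $\eta_N(k)=\frac{N^{k-1}}{\gcd(\mu,N)}$, the claim reduces to showing $\gcd(\mu_j,N)=1$ for every $j\geq 2$. For that I would feed the same binomial expansion back in: with $s_j=\frac{N}{\gcd(\mu_j,N)}$ we have $m^{d_{j+1}}=(1+\mu_jN^j)^{s_j}$, and dividing $m^{d_{j+1}}-1$ by $N^{j+1}$ one reads off $\mu_{j+1}=\frac{\mu_j}{\gcd(\mu_j,N)}+\binom{s_j}{2}\mu_j^2N^{j-1}+\cdots$, where for $j\geq 2$ every correction term carries a positive power of $N$; hence $\mu_{j+1}\equiv\frac{\mu_j}{\gcd(\mu_j,N)}\pmod N$. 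Once $\gcd(\mu_2,N)=1$ is checked, an induction then gives $\gcd(\mu_j,N)=1$ for all $j\geq 2$, and the formula $\ord_m(N^k)=\ord_m(N)\cdot\eta_N(k)$ drops out.

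The step I expect to be the real obstacle is this last one: showing that the carry coming from the quadratic and higher binomial terms does not destroy the coprimality with $N$ of the successive $\mu_j$ — equivalently, pinning down the exact $N$-adic valuation of $m^{\ord_m(N^j)}-1$ as $j$ grows. This is where the arithmetic of $N$ and the precise value of $\mu=\mu_1$ genuinely enter; the remaining ingredients (the order at a higher power of $N$ being a multiple of the order at a lower one, and the minimality of $t$) are routine.
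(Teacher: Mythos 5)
Your one-step recursion is correct, and for $k=2$ it coincides with the paper's computation: writing $m^{d_j}=1+\mu_jN^j$ with $d_j=\ord_m(N^j)$, the binomial expansion modulo $N^{j+1}$ does give $\ord_m(N^{j+1})=\ord_m(N^j)\cdot\frac{N}{\gcd(\mu_j,N)}$, and telescoping yields
\[
\ord_m(N^k)=\ord_m(N)\cdot\frac{N^{k-1}}{\gcd(\mu_1,N)\cdots\gcd(\mu_{k-1},N)}.
\]
The gap is exactly the step you flagged and left unchecked: the lemma's formula requires $\gcd(\mu_j,N)=1$ for every $j\ge 2$, and this is false. Take $m=3$, $N=2$: then $\ord_3(2)=1$ and $\mu=\mu_1=1$, so the $k=2$ case correctly gives $\ord_3(4)=2$; but $3^2=1+2\cdot 4$, so $\mu_2=2$ and $\gcd(\mu_2,N)=2$. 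Your recursion then (correctly) gives $\ord_3(8)=2$, whereas \eqref{eq:orders} predicts $\ord_3(2^3)=\ord_3(2)\cdot\eta_2(3)=4$. The same happens for an odd prime $N$: with $m=28$ and $N=3$ one has $\mu=9$, and \eqref{eq:orders} predicts $\ord_{28}(27)=3$, while $28\equiv 1\pmod{27}$. So the obstacle you isolated cannot be overcome: Lemma \ref{lem:orders} as stated fails for $k\ge 3$. The source of the trouble is visible in your own congruence for $\mu_{j+1}$: at $j=1$ the quadratic term $\binom{s_1}{2}\mu_1^2N^{j-1}=\binom{s_1}{2}\mu_1^2$ carries no factor of $N$, which is precisely what allows $\mu_2$ to acquire common factors with $N$ that $\mu_1$ did not have.

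For what it is worth, the paper's own proof establishes $k=2$ by the identical computation and then asserts that ``the same arguments can be applied'' for general $k$; that sentence is exactly the induction step you declined to take on faith, and it is where the error lives. The correct general statement is your telescoped identity (equivalently, the classical lifting-the-exponent description: the growth of $\ord_m(N^k)$ is governed by the full $N$-adic valuation of $m^{\ord_m(N)}-1$, of which $\gcd(\mu,N)$ detects only one extra factor of $N$). Since the lemma is only used downstream, via Lemma \ref{lem:function_eta} and Lemma \ref{lem:lowerboundonratio}, to produce a lower bound $\ord_m(p^\beta)\ge\ord_m(p)\,p^{\beta-c(m,p)}$, your identity together with the observation that $\gcd(\mu_j,N)>1$ can happen for only boundedly many $j$ (for a fixed $m$ and $N$) recovers what the paper actually needs; but as a proof of the stated equality, the proposal cannot be completed, because the equality is not true.
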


  \begin{proof} The case $k = 1$ being obvious, let us consider $k = 2$, and set $\beta=\ord_m(N)$. We show that the smallest positive integer $\lambda$ that satisfies $m^\lambda \equiv 1 \pmod{N^2}$ is
    $\lambda = \beta \frac{N}{\gcd(\mu, N)}$. Note that $\beta \mid \lambda$~: indeed if
    $m^\lambda \equiv 1 \pmod{N^2}$, then $m^\lambda \equiv 1 \pmod N$ so that $\beta$ must divide $\lambda$
    (see \cite[Cor. 2 p. 79]{gallian2012contemporary}). Thus $\lambda = \beta \tilde{\lambda}$ for some
    $\tilde{\lambda} \in \N$ and we only have to show that $\tilde{\lambda} = \frac{N}{\gcd(\mu, N)}$.  We
    have that
    \begin{align*}
      (m^\beta)^{\tilde{\lambda}}
      &= (\mu N + 1)^{\tilde{\lambda}}\\
      &= \sum_{i = 0}^{\tilde{\lambda}} \binom{\tilde{\lambda}}{i} (\mu N)^i\\
      &\equiv 1 + \tilde{\lambda} \mu N \pmod{N^2}.
    \end{align*}
    The last line shows that the smallest $\tilde{\lambda}$ we can take to have $m^{\beta \tilde{\lambda}}
    \equiv 1 \pmod{N^2}$ must be $\tilde{\lambda} = \frac{N}{\gcd(\mu, N)}$, thus demonstrating that $\ord_m(N^2) =
    \beta \cdot \frac{N}{\gcd(\mu, N)}$.

    The same arguments can be applied to show that $\ord_m(N^k) = \beta \frac{N^{k-1}}{\gcd(\mu, N)}$ for $k
    \geq 2$.
  \end{proof}

  \begin{lemma}
    \label{lem:function_eta}
    Let $k, N \in \N^\ast$. Then $\eta_N(k) \geq N^{k-2}$.
  \end{lemma}

  \begin{proof}
    If $k = 1$, then $1 \geq N^{-1}$. If $k \geq 2$, observe that $\gcd(\mu, N) \leq N$ implies 
    \[ \frac{N^{k-1}}{\gcd(\mu, N)} \geq N^{k-2}. \]
  \end{proof}

  Denote by $\mathcal{P} \subset \N$ the set of prime numbers. The following lemma gives us a formula to
  compute the order of $m$ in $\Z/N\Z$ for any $N \in \N$. It is an immediate consequence of the Chinese
  remainder theorem.

\begin{lemma}
  \label{lem:order=lcm}
  For every $N \in \N$, which we write $N = p_1^{\beta_1}p_2^{\beta_2} \cdots p_n^{\beta_n}$ with $p_i \in
  \mathcal{P}$ and $\beta_i \in \N$ for every $i \in \{1, \ldots , n\}$, we have
  \begin{equation}
    \label{eq:order=lcm}
    \ord_m(N) = \ord_m\left(p_1^{\beta_1}p_2^{\beta_2} \cdots p_n^{\beta_n}\right) = \lcm\left(\ord_m\left(p_1^{\beta_1}\right),
    \ord_m\left(p_2^{\beta_2}\right), \ldots , \ord_m\left(p_n^{\beta_n}\right)\right).
  \end{equation}

\end{lemma}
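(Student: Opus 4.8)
The plan is to reduce the statement to a single-prime-power computation and then invoke the Chinese remainder theorem. Concretely, write $N = p_1^{\beta_1} \cdots p_n^{\beta_n}$ as in the hypothesis. Since $\gcd(m, N) = 1$, we have $\gcd(m, p_i^{\beta_i}) = 1$ for each $i$. The key elementary fact is that for coprime moduli $M_1, \ldots, M_n$, the congruence $m^\lambda \equiv 1 \pmod{M_1 \cdots M_n}$ holds if and only if $m^\lambda \equiv 1 \pmod{M_i}$ for every $i$ (this is just the ring isomorphism $\Z/(M_1\cdots M_n)\Z \simeq \prod_i \Z/M_i\Z$ from the Chinese remainder theorem, applied to the unit $m$). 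Taking $M_i = p_i^{\beta_i}$, this says $m^\lambda \equiv 1 \pmod N$ iff $\ord_m(p_i^{\beta_i}) \mid \lambda$ for every $i$.

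Next I would translate the "if and only if" into the claimed lcm formula. The order $\ord_m(N)$ is by definition the least positive $\lambda$ with $m^\lambda \equiv 1 \pmod N$; by the previous paragraph this is the least positive $\lambda$ divisible by each $\ord_m(p_i^{\beta_i})$, which is exactly $\lcm\!\left(\ord_m(p_1^{\beta_1}), \ldots, \ord_m(p_n^{\beta_n})\right)$. That gives Eq.~(\ref{eq:order=lcm}) directly. For the bookkeeping one can proceed by induction on $n$ using Proposition~\ref{prop:properties_gcd_lcm}.(5), reducing to the two-factor case $\ord_m(M_1 M_2) = \lcm(\ord_m(M_1), \ord_m(M_2))$ whenever $\gcd(M_1, M_2) = 1$, but the direct argument via the universal property of the lcm as stated above is cleaner and avoids any calculation.

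There is essentially no main obstacle here: the statement is, as the paper says, an immediate consequence of the Chinese remainder theorem, and the only point requiring a word of care is the observation that an element of a finite product of rings is $1$ precisely when each of its components is $1$, so that the "order in the product" is the lcm of the "orders in the factors." I would make sure to state explicitly that coprimality of the $p_i^{\beta_i}$ is what licenses the use of CRT, and that no hypothesis on the $\beta_i$ (beyond being in $\N$) is needed. One could optionally remark that this reduces the computation of $\ord_m(N)$ to the prime-power case, which is handled by Lemma~\ref{lem:orders} together with $\ord_m(p^\beta) = \ord_m(p)\cdot\eta_p(\beta)$.
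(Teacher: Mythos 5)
Your proposal is correct and matches the paper's intent exactly: the paper offers no written proof beyond the remark that the lemma is an immediate consequence of the Chinese remainder theorem, and your argument (CRT splits $m^\lambda \equiv 1 \pmod N$ into the system of congruences modulo each $p_i^{\beta_i}$, so the least such $\lambda$ is the lcm of the individual orders) is precisely the intended justification. The only hypothesis you rightly flag is $\gcd(m,N)=1$, which the paper assumes throughout this context.
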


\begin{lemma}
  \label{lem:lowerboundonratio}
  Let $P$ be a finite set of primes, not dividing $m$. There exists a constant $C(m,P)>0$ such that, for every integer $N$ with all prime factors in $P$, we have:
  $$\frac{\ord_m(N)}{N}\geq C(m,P).$$
\end{lemma}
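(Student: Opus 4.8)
The plan is to reduce to prime powers via Lemma~\ref{lem:order=lcm}, control $\ord_m$ of each prime power with Lemma~\ref{lem:orders}, and then recombine using \emph{coprimality} rather than size. Write the prime factorisation of $N$ as $N = q_1^{\gamma_1}\cdots q_n^{\gamma_n}$ with the $q_i \in P$ pairwise distinct and $\gamma_i \geq 1$. By Lemma~\ref{lem:order=lcm}, $\ord_m(N) = \lcm\bigl(\ord_m(q_1^{\gamma_1}),\dots,\ord_m(q_n^{\gamma_n})\bigr)$. The naive attempt — bound each $\ord_m(q_i^{\gamma_i})$ below by $q_i^{\gamma_i-2}$ via Lemma~\ref{lem:function_eta} and multiply — does not work, since the $\lcm$ can be far smaller than the product; this is the one real obstacle, and circumventing it is the heart of the argument.

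The fix is to observe a \emph{divisibility} statement: for each $i$, the prime power $q_i^{\max(\gamma_i-2,\,0)}$ divides $\ord_m(q_i^{\gamma_i})$. Indeed, since $q_i \in P$ does not divide $m$ we may apply Lemma~\ref{lem:orders} with $N = q_i$, writing $m^{\ord_m(q_i)} = \mu_i q_i + 1$; then $\ord_m(q_i^{\gamma_i}) = \ord_m(q_i)\cdot \eta_{q_i}(\gamma_i)$, and for $\gamma_i \geq 2$ we have $\eta_{q_i}(\gamma_i) = q_i^{\gamma_i-1}/\gcd(\mu_i,q_i)$. Because $q_i$ is prime, $\gcd(\mu_i,q_i) \in \{1, q_i\}$, so $\eta_{q_i}(\gamma_i)$ is either $q_i^{\gamma_i-1}$ or $q_i^{\gamma_i-2}$ — in both cases a multiple of $q_i^{\gamma_i-2}$. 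For $\gamma_i = 1$ the assertion is vacuous.

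Now the numbers $q_i^{\max(\gamma_i-2,\,0)}$ are pairwise coprime (distinct primes), each divides $\ord_m(q_i^{\gamma_i})$, hence their product divides the $\lcm$, i.e. divides $\ord_m(N)$. Therefore
\[
  \ord_m(N) \;\geq\; \prod_{i=1}^{n} q_i^{\max(\gamma_i-2,\,0)} \;=\; \frac{N}{\prod_{i=1}^{n} q_i^{\min(\gamma_i,\,2)}} \;\geq\; \frac{N}{\prod_{p\in P} p^{2}},
\]
so the conclusion holds with $C(m,P) = \bigl(\prod_{p\in P} p^{2}\bigr)^{-1}$ (in fact independently of $m$). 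The only bookkeeping subtlety is the exponents $\gamma_i = 1$, where $\eta$ contributes nothing; this merely costs the bounded factor $\prod_{p\in P} p^2$, which is harmless because $P$ is finite.
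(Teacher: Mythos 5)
Your proof is correct, and it takes a genuinely different route from the paper's. The paper expands $\ord_m(N)=\lcm\bigl(\ord_m(p_1^{\beta_1}),\dots,\ord_m(p_k^{\beta_k})\bigr)$ via the explicit product-over-gcd formula of Lemma~\ref{lem:lcm=prod_divided_by_gcd}, bounds each $\eta_{p_i}(\beta_i)$ below by $p_i^{\beta_i-2}$ using Lemma~\ref{lem:function_eta}, and absorbs the resulting denominator $\gcd(\Pi_k)$ into the constant. You instead handle the lcm-versus-product issue by divisibility: since each $q_i$ is prime, $\gcd(\mu_i,q_i)\in\{1,q_i\}$, so $q_i^{\max(\gamma_i-2,\,0)}$ divides $\ord_m(q_i^{\gamma_i})$; these prime powers are pairwise coprime, so their product divides the lcm. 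This buys you two things. First, the argument is shorter and yields the explicit constant $C=\bigl(\prod_{p\in P}p^{2}\bigr)^{-1}$, independent of $m$. Second, it is tighter on a point the paper treats loosely: the quantity $\gcd(\Pi_k)$ appearing in Eq.~(\ref{eq:part1-proof}) depends on the exponents $\beta_i$ and not only on the subset $\{p_1,\dots,p_k\}$ of $P$, so taking ``the minimum over all subsets of $P$'' does not immediately produce a constant uniform in $N$ without a further (true but unstated) bound on $\gcd(\Pi_k)$; your coprimality argument sidesteps this entirely. The only ingredient you use beyond the paper's is the primality of the bases when invoking Lemma~\ref{lem:orders}, which is exactly what forces $\gcd(\mu_i,q_i)\in\{1,q_i\}$ and is available since you apply that lemma to prime moduli only.
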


\begin{proof}
  Write $N= p_1^{\beta_1} \ldots p_{k}^{\beta_{k}}$, with $p_i \in P$, all different, $\beta_i > 0$ and $\eta_{p_i}$ defined as in Definition \ref{definitionnu}. In addition, we define the set 
  \begin{align*}
      \Pi_k := \{ &\ord_m(p_1)\eta_{p_1}(\beta_1) \cdots \ord_m(p_{k - 1}) \eta_{p_{k -1}}(\beta_{k -
      1}),\\
      &\ord_m(p_1)\eta_{p_1}(\beta_1) \cdots \ord_m(p_{k - 2}) \eta_{p_{k -2}}(\beta_{k -
      2}) \ord_m(p_{k}) \eta_{p_{k}}(\beta_{k}), \\
      &\ldots\\
      &  \ord_m(p_2)\eta_{p_2}(\beta_2) \cdots \ord_m(p_{k}) \eta_{p_{k}}(\beta_{k})
  \} 
   \end{align*} 
which contains all possible products with $k-1$ factors, each one of $\ord_m(p_i)\eta_{p_i}(\beta_i)$, $i = 1, \ldots , k$.
  Using Lemmas~\ref{lem:lcm=prod_divided_by_gcd}, \ref{lem:orders}, and \ref{lem:order=lcm}, we obtain
  \begin{align*}
    \ord_m(N)
    &= \lcm\left(\ord_m\left(p_1^{\beta_1}\right),
      \ord_m\left(p_2^{\beta_2}\right), \ldots , \ord_m\left(p_{k}^{\beta_{k}}\right)\right)\\
    &= \frac{\ord_m(p_1) \eta_{p_1}(\beta_1) \cdots \ord_m(p_{k})
      \eta_{p_k}(\beta_{k})}{\gcd(\Pi_k)},
  \end{align*}
  thus
  \begin{equation}
    \label{eq:quotient}
    \frac{\ord_m(N)}{N} = \frac{\ord_m(p_1) \eta_{p_1}(\beta_1) \cdots \ord_m(p_{k})
      \eta_{p_k}(\beta_{k})}{N \cdot \gcd(\Pi_k)}.
  \end{equation}

Moreover, $\ord_m(p_i)\geq 1$ for every $i$, and using
  Lemma~\ref{lem:function_eta} on each $\eta_{p_i}(\beta_i)$, we obtain from Eq.~(\ref{eq:quotient}) 
  \begin{equation}
    \label{eq:part1-proof}
    \frac{\ord_m(N)}{N} \geq \frac{p_1^{\beta_1 - 2} \cdots p_{{k}}^{\beta_{{k}}-2}}{p_1^{\beta_1}
      \cdots p_{{k}}^{\beta_{{k}}} \cdot \gcd(\Pi_{k})} = \frac{1}{p_1^2 \ldots p_{{k}}^2 \cdot \gcd(\Pi_{k})} > 0.
  \end{equation}
  So we may take $C(m,P)$ as the minimum of the $\frac{1}{p_1^2 \ldots p_{k}^2 \cdot \gcd(\Pi_{k})}$'s taken over all subsets $\{p_1,...,p_k\}$ of $P$.

\end{proof}

The following material will be used in the proof of Proposition \ref{CSPfinal}
	\begin{definition}
	Let $N\in \NN$ and $N= \Pi_{i=1}^{r} p_i^{\beta_i} $ its decomposition in prime factors. The \textit{dominant prime} of $N$ is the factor $p_I$ in $P$ such that $p_I^{\beta_I}\ge p_i^{\beta_i}$ for $\forall i$. The factor $p_I^{\beta_I}$ is called the \textit{dominating factor}.
\end{definition}

	\begin{lemma}\label{lemmaoddordermodulo} Fix $s\in(\Z[1/m])^\times$, with $s>0$. There exist infinitely many integers $N>0$, coprime to $m$, such that $s$ has odd order in the multiplicative group $(\Z[1/m]/N\Z[1/m])^\times\simeq(\Z/N\Z)^\times$.
	\end{lemma}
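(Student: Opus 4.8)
The plan is to realize the required $N$ among the primes. First I would reduce the statement to the following claim: there exist infinitely many primes $q$ not dividing $m$ with $q\equiv 3\pmod 4$ and $\left(\frac{s}{q}\right)=1$. Indeed, for such a prime $(\Z/q\Z)^\times$ is cyclic of order $q-1=2b$ with $b$ odd, so its unique subgroup of index $2$ is simultaneously the set of squares and the set of elements of odd order; hence $s$ has odd order in $(\Z/q\Z)^\times$ as soon as it is a quadratic residue there. Taking $N=q$ and using the isomorphism $(\Z[1/m]/q\Z[1/m])^\times\simeq(\Z/q\Z)^\times$ of Lemma~\ref{prop:reduction_mod_N_is_nice} finishes the reduction.

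Next I would compute the relevant Legendre symbol. The units of $\Z[1/m]$ are exactly $\pm\prod_{p\mid m}p^{a_p}$ with $a_p\in\Z$, so, since $s>0$, we may write $s=\prod_{p\mid m}p^{a_p}$, and then for every prime $q$ coprime to $m$,
\[ \left(\frac{s}{q}\right)=\prod_{p\mid m}\left(\frac{p}{q}\right)^{a_p}=\prod_{p\,:\,a_p\ \mathrm{odd}}\left(\frac{p}{q}\right), \]
because each factor is $\pm1$. If every $a_p$ is even, then $s$ is a square in $\Z[1/m]$, the right-hand side is $1$ for every $q$ coprime to $m$, and since there are infinitely many primes $\equiv 3\pmod 4$ the claim is immediate. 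Otherwise let $Q$ be the nonempty set of primes $p\mid m$ with $a_p$ odd.

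In the remaining case I would use quadratic reciprocity to convert the condition ``$q\equiv 3\pmod 4$ and $\prod_{p\in Q}\left(\frac{p}{q}\right)=1$'' into a congruence condition on $q$ modulo $8\prod_{p\in Q,\,p\ \mathrm{odd}}p$, and exhibit one admissible residue class by the Chinese remainder theorem: impose $q\equiv 3\pmod 4$ (refined to $q\equiv 7\pmod 8$ if $2\in Q$, so that $\left(\frac{2}{q}\right)=1$), and for each odd $p\in Q$ choose $q\bmod p$ so that $\left(\frac{p}{q}\right)=1$ — this is possible because, once $q\bmod 4$ is fixed, $\left(\frac{p}{q}\right)=\pm\left(\frac{q}{p}\right)$ takes both values as $q$ varies modulo $p$. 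The resulting class is coprime to its modulus, so Dirichlet's theorem on primes in arithmetic progressions provides infinitely many primes $q$ in it; all but finitely many are coprime to $m$, and each satisfies $\left(\frac{s}{q}\right)=1$, hence gives a valid $N=q$.

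There is no serious obstacle here. The only point needing a little care is the reciprocity bookkeeping when $2\mid m$ and the $2$-adic exponent of $s$ is odd, which is handled by working modulo $8$ and selecting $q\equiv 7\pmod 8$; all the external input (quadratic reciprocity, Dirichlet's theorem) is classical. If a more conceptual formulation is preferred, one could replace Dirichlet by the Chebotarev density theorem applied to the multiquadratic field $\Q(\sqrt{-1},\{\sqrt{p}\}_{p\in Q})$, where the desired Frobenius class is nonempty by linear disjointness of the quadratic subextensions.
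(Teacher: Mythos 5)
Your proof is correct, but it takes a genuinely different route from the paper. The paper's argument is completely elementary: writing $s=a_1/a_2$ with $a_1>a_2>0$ coprime and supported on the primes dividing $m$, it sets $a_1^k-a_2^k=N_kq_k$ with $N_k$ the prime-to-$m$ part, observes that for odd $k$ the order of $s$ modulo $N_k$ divides $k$ and is hence odd, and then shows by a pigeonhole argument on the dominant primes of the $q_k$ that $q_k$ stays bounded along an infinite set of odd $k$, so that $N_k$ takes infinitely many values. Your approach instead produces \emph{prime} moduli: for $q\equiv 3\pmod 4$ the group $(\Z/q\Z)^\times$ is cyclic of order $2b$ with $b$ odd, so its index-two subgroup of squares coincides with the elements of odd order, and the problem reduces to finding infinitely many such $q$ with $\left(\frac{s}{q}\right)=1$; this you settle correctly by quadratic reciprocity, the Chinese remainder theorem and Dirichlet's theorem (your bookkeeping for the unit group of $\Z[1/m]$, for negative exponents, and for the prime $2$ via the class $q\equiv 7\pmod 8$ all checks out). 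The trade-off: the paper's proof uses no external input beyond the pigeonhole principle and yields explicit moduli $N_k\mid a_1^k-a_2^k$, whereas yours leans on Dirichlet (or Chebotarev) but is conceptually shorter and gives the stronger conclusion that a positive density of primes $q$ work.
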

	\begin{proof} If $s=1$, take any $N$ coprime with $m$. So we assume $s\neq 1$ and, replacing $s$ by $s^{-1}$ if necessary, we assume $s>1$.
	
		Let $P$ be the set of primes dividing $m$, set $r=|P|$. Write $s=\frac{a_1}{a_2}$ with $a_1>a_2>0$, coprime, and all their prime factors in $P$. For $k\in\NN$, set $a_1^k-a_2^k=  N_kq_k$, where $N_k$ is the maximal factor coprime to $m$. Observe that if a prime in $P$ divides $q_k$, it cannot simultaneously divide $a_1$ and $a_2$, since they are coprime.
		
		It is clear that, for odd $k$, $s$ will also have odd order in $(\Z[1/m]/(a_1^k-a_2^k)\Z[1/m])^\times$, hence also in $(\Z[1/m]/N_k\Z[1/m])^\times$. It is therefore enough to prove that the map $k\mapsto N_k$ takes infinitely many values on odd integers. This will follow immediately from the following.
		
		\medskip
		{\bf Claim:} There is an infinite family of odd integers $k$ such that $q_k \leq (a_1^{2r}-a_2^{2r})^r$.
		
\medskip
To prove the claim, take $k$ odd with $k>2r$. By the pigeonhole principle there are $i,j\in\{ 0,1\dots, r \}$ with $i<j$ such that the dominant primes in $q_{k-2i}$ and $q_{k-2j}$ are the same. We have that 
	\begin{align*}
		(s^{k-2i}-1)-(s^{k-2j}-1) &= s^{k-2j}(s^{2(j-i)}-1)\\
									&= \frac{a_1^{k-2j}}{a_2^{k-2i}}( a_1^{2(j-i)} -a_2^{2(j-i)}).
	\end{align*}			
		Write $p_i^{\beta_i}$ for the dominating factor of $q_{k-2i}$. Set $\beta =  \min\{\beta_i,\beta_j\}$ and $p=p_i$; say that $\beta=\beta_i$ (otherwise replace $i$ by $j$). We see that $p^\beta$ divides the numerator on the left, hence it divides the numerator on the right. We note that $p$ does not divide $a_1$, nor $a_2$, hence it must divide $a_1^{2(j-i)} -a_2^{2(j-i)}$, which is bounded by $(a_1^{2r}-a_2^{2r})$. So we get $p^\beta\leq (a_1^{2r}-a_2^{2r})$ and it follows that $q_{k-2i}\leq (a_1^{2r}-a_2^{2r})^r$.   	
	\end{proof}

\section{Property $D_\alpha$ for solvable Baumslag-Solitar groups}
\label{sec:prop-d_alpha-solv}

\subsection{Metric aspects of solvable Baumslag-Solitar groups}
\label{sec:gener-solv-baumsl}

We study the diameter of arithmetic box spaces of $\BS$ according to Eq.~(\ref{eq:property_D_alpha}). In this section, we will always assume that $\gcd(m, N) = 1$. We
recall that every element of $\BS$ ($m > 1$) admits a unique normal form of the type $t^{-i}a^\ell t^j$ with
$i, j \geq 0, \ell \in \Z$ and $\ell$ can be a multiple of $m$ only if either $i$ or $j$ is zero. Indeed, one
can rewrite $ta$ as $a^mt$, $ta^{-1}$ as $a^{-m}t$, $at^{-1}$ as $t^{-1}a^m$, and $a^{-1}t^{-1}$ as
$t^{-1}a^{-m}$ and the result follows.

\medskip
The normal form of a word is usually not the geodesic form, and we want to estimate how well the
normal form approximates the geodesic form.

\begin{proposition}[{\cite[Prop. 2.1]{burillo2015metric}}]
  \label{prop:burillo-elder}
  There exist constants $C_1, C_2, D_1, D_2 > 0$ such that for any $\omega = t^{-i} a^\ell t^j \in \BS$ with
  $\ell \neq 0$, we have 
  \[ C_1(i + j + \log |\ell|) - D_1 \leq \|\omega\| \leq C_2(i + j + \log |\ell|) + D_2 \]
  where $\| \cdot \|$ is the word metric with respect to $\{a^{\pm 1}, t^{\pm 1}\}$. Moreover we may take $C_2 = D_2
  = m$.
\end{proposition}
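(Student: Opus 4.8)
The plan is to argue entirely inside the semidirect-product (equivalently matrix) model $\BS\cong\Z[1/m]\rtimes_m\Z$, where $a\leftrightarrow(1,0)$, $t\leftrightarrow(0,1)$, the product is $(x,p)(y,q)=(x+m^{p}y,\,p+q)$, and the normal-form element $\omega=t^{-i}a^{\ell}t^{j}$ corresponds to the pair $(m^{-i}\ell,\;j-i)$. I would prove the two inequalities separately.

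\textit{Upper bound.} Expand $\ell$ in base $m$: if its nonzero digits $c_{1},\dots,c_{r}$ sit at positions $0\le p_{1}<\dots<p_{r}=\lfloor\log_{m}|\ell|\rfloor$, then, using $a^{m^{p}}=t^{p}at^{-p}$, we get $a^{\ell}=\prod_{s}a^{c_{s}m^{p_{s}}}=t^{p_{1}}a^{c_{1}}t^{p_{2}-p_{1}}a^{c_{2}}\cdots a^{c_{r}}t^{-p_{r}}$, a ``staircase'' word that climbs once from level $0$ to level $p_{r}$ and back, using $2p_{r}$ letters $t^{\pm1}$ and $\sum_{s}|c_{s}|$ letters $a^{\pm1}$. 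Prepending $t^{-i}$, appending $t^{j}$ and cancelling against the terminal $t$-power gives $\|\omega\|\le i+j+2p_{r}+\sum_{s}|c_{s}|$; since $p_{r}\le\log_{m}|\ell|$ and $\sum_{s}|c_{s}|$ is bounded by the base-$m$ digit sum — which one shrinks further by using a minimal-weight (balanced) representation — this has the form $C_{2}(i+j+\log|\ell|)+D_{2}$. The sharp values $C_{2}=D_{2}=m$ are exactly the refinement carried out in \cite{burillo2015metric}, which I would quote rather than reprove.

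\textit{Lower bound.} Take a geodesic word $w=g_{1}\cdots g_{n}$ for $\omega$ and let $s_{0}=0,\dots,s_{n}$ be its partial $t$-exponent sums ($s_{k}-s_{k-1}=\pm1$ when $g_{k}=t^{\pm1}$, and $0$ otherwise). Evaluating $w$ in the model yields $s_{n}=j-i$ and $m^{-i}\ell=\sum_{k:\,g_{k}=a^{\pm1}}\epsilon_{k}\,m^{s_{k-1}}$ with $\epsilon_{k}=\pm1$. First, $n\ge i+j$: writing $S^{-}=\min_{k}s_{k}\le0$, the last sum is $m^{S^{-}}$ times an integer, so the least $r\ge0$ with $m^{r}\cdot(m^{-i}\ell)\in\Z$ is at most $-S^{-}$; when $i,j>0$ the normal form forces $m\nmid\ell$, hence that least $r$ is exactly $i$, so $-S^{-}\ge i$ and $w$ contains at least $i$ letters $t^{-1}$ (in the prefix ending where $S^{-}$ is attained), while if $i=0$ or $j=0$ the identity $s_{n}=j-i$ gives the same bound $\#\{t^{-1}\}\ge i$; then $\#\{t^{+1}\}=\#\{t^{-1}\}+(j-i)\ge j$, whence $n\ge\#\{t^{-1}\}+\#\{t^{+1}\}\ge i+j$ and in particular $i,j\le n$. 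Second, the archimedean estimate: taking $|\cdot|$ in $\R$ and using $\max_{k}s_{k}\le n$, $m^{-i}|\ell|\le n\,m^{\max_{k}s_{k}}\le n\,m^{n}$, so $|\ell|\le n\,m^{2n}$ and $n=\Omega(\log|\ell|)$. Combining, $\|\omega\|=n\ge C_{1}(i+j+\log|\ell|)-D_{1}$.

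\textit{Main obstacle.} The upper bound is bookkeeping once one commits to the base-$m$ expansion, and nailing the constants $C_{2}=D_{2}=m$ is an optimisation I would import from \cite{burillo2015metric}. The substance is the lower bound, and the delicate point is the $m$-adic step: seeing that the $m$-denominator of $m^{-i}\ell$ equals $i$ precisely because of the normal-form restriction that $m\mid\ell$ only if $i=0$ or $j=0$, and then combining the forced $t^{-1}$-letters with the $t^{+1}$-letters forced by the total exponent $s_{n}$ so that the two counts add. That the normal form enters here is unavoidable: without it, $i$ and $\log|\ell|$ cannot be separated, as $t^{-k}a^{m^{k}}t^{k}=a$ shows.
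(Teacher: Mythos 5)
The paper does not prove this proposition at all: it is imported verbatim from Burillo--Elder \cite{burillo2015metric}, so there is no internal proof to compare against. Your argument is a correct, essentially self-contained proof of the qualitative statement (existence of $C_1,C_2,D_1,D_2$), and it follows the standard route, which is also in substance what the cited source does: the upper bound via the base-$m$ ``staircase'' rewriting of $a^{\ell}$, and the lower bound by tracking the partial $t$-exponent sums of a geodesic and reading off both an archimedean bound ($|\ell|\le n\,m^{2n}$, hence $n=\Omega(\log|\ell|)$) and an $m$-denominator bound forcing at least $i$ letters $t^{-1}$ and hence at least $j$ letters $t^{+1}$. Two points are worth noting. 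First, your handling of the denominator step is careful in the right way: since $m$ need not be prime, phrasing it as ``the least $r\ge 0$ with $m^{r}\cdot(m^{-i}\ell)\in\Z$'' rather than an $m$-adic valuation is exactly what makes the argument work in $\Z[1/m]$, and you correctly isolate where the normal-form restriction ($m\mid\ell$ only if $i=0$ or $j=0$) is used. Second, your bookkeeping for the upper bound yields constants like $(m+1)/\log m$ in front of $\log|\ell|$, which for small $m$ is worse than $m$; you honestly defer the sharp values $C_2=D_2=m$ to \cite{burillo2015metric}, which is acceptable here since the proposition is itself a citation and the rest of the paper only uses the existence of the constants (the value $C_m=2m(2+\log m)$ in Lemma \ref{lemma:diameter-bounds} absorbs them anyway).
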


Let $A = \left(
\begin{smallmatrix}
  1 & 1 \\ 0 & 1
\end{smallmatrix}\right)
$, $T = \left(
\begin{smallmatrix}
  m & 0 \\ 0 & 1
\end{smallmatrix}\right) \in GL_2\left(\Z\left[ 1/m \right]\right)$, and denote by $G_m$ the
subgroup of $GL_2\left(\Z\left[ 1/m \right]\right)$ generated by $A$ and $T$. In the previous section,
we saw that $BS(1, m) \simeq G_m \leq GL_2\left(\Z\left[ 1/m \right]\right)$. 

\medskip
As mentioned before, $BS(1, m) \simeq G_m$ is a finitely generated, residually finite group that
surjects onto $\Z$ so that Theorem \ref{thm:property_D_alpha_and_cyclic_groups} applies and we know that for
every $0 < \alpha < 1$, there exists a box space of $G_m$ with property $D_\alpha$. However, we are interested in specific box spaces of $G_m$, namely the arithmetic box spaces, or in other words, box spaces
of the form $\square_{(G_m(N_k))_k} G_m$. To this end, we start by studying the quotients
$G_m/G_m(N)$, and then explore how the diameters evolve.

\begin{proposition}
  Let $N \in \N$ be such that $\gcd(m, N) = 1$. Then
  \begin{equation}
    \label{eq:quotient_Gamma}
    G_m/G_m(N) \simeq \Z/N\Z \rtimes_m \Z/\ord_m(N)\Z, \quad \text{ and } \quad \left| G_m/G_m(N)
    \right| = N \cdot \ord_m(N),
  \end{equation}
  where $\Z/\ord_m(N) \Z$ acts on $\Z/N\Z$ by multiplication by $m$.
\end{proposition}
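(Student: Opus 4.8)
The plan is to compute the quotient $G_m/G_m(N)$ explicitly by reducing matrix entries modulo $N$. Recall that $G_m$ consists of matrices $\left(\begin{smallmatrix} m^k & r \\ 0 & 1\end{smallmatrix}\right)$ with $k \in \Z$ and $r \in \Z[1/m]$, and that $G_m(N) = G_m \cap M_N$ where $M_N$ is the kernel of reduction mod $N$ in $GL_2(\Z[1/m])$. So I would consider the reduction homomorphism $\pi_N : G_m \to GL_2(\Z/N\Z)$ obtained by restricting the reduction map $GL_2(\Z[1/m]) \to GL_2(\Z/N\Z)$ (this uses $\gcd(m,N)=1$, so that $m$ is invertible mod $N$ and $\Z[1/m] \to \Z/N\Z$ is well-defined, as guaranteed by Lemma \ref{prop:reduction_mod_N_is_nice}). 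By the first isomorphism theorem, $G_m/G_m(N) \cong \pi_N(G_m)$, so it suffices to identify the image.

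The image $\pi_N(G_m)$ consists of matrices $\left(\begin{smallmatrix} \bar m^k & \bar r \\ 0 & 1\end{smallmatrix}\right)$ in $GL_2(\Z/N\Z)$. First I would argue that $\bar r$ ranges over all of $\Z/N\Z$: since $1 \in \Z[1/m]$ maps onto $1 \in \Z/N\Z$ and $A = \left(\begin{smallmatrix} 1 & 1 \\ 0 & 1\end{smallmatrix}\right)$, the powers $A^j$ give every $\bar r \in \Z/N\Z$ (using $\Z[1/m]/N\Z[1/m] \simeq \Z/N\Z$ from Lemma \ref{prop:reduction_mod_N_is_nice}). Next, the top-left entry $\bar m^k$ ranges over the cyclic subgroup $\langle \bar m\rangle \leq (\Z/N\Z)^\times$, which has order exactly $\ord_m(N)$ by definition. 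The key structural point is that the map $\left(\begin{smallmatrix} m^k & r \\ 0 & 1\end{smallmatrix}\right) \mapsto (\bar r, k \bmod \ord_m(N))$ is a well-defined bijection onto $\Z/N\Z \times \Z/\ord_m(N)\Z$: well-definedness of the second coordinate follows because $m^k \equiv 1 \pmod N$ iff $\ord_m(N) \mid k$, and the first coordinate is well-defined since reduction mod $N$ on $\Z[1/m]$ is a homomorphism. Then I would check this bijection intertwines matrix multiplication with the semidirect product law: multiplying $\left(\begin{smallmatrix} m^k & r \\ 0 & 1\end{smallmatrix}\right)\left(\begin{smallmatrix} m^{k'} & r' \\ 0 & 1\end{smallmatrix}\right) = \left(\begin{smallmatrix} m^{k+k'} & m^k r' + r \\ 0 & 1\end{smallmatrix}\right)$ shows the $\Z/N\Z$-part transforms as $\bar r \mapsto \bar r + \bar m^k \bar r'$, which is exactly the action of $\Z/\ord_m(N)\Z$ on $\Z/N\Z$ by multiplication by $m$. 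This gives the isomorphism $G_m/G_m(N) \simeq \Z/N\Z \rtimes_m \Z/\ord_m(N)\Z$, and the order formula $|G_m/G_m(N)| = N \cdot \ord_m(N)$ is immediate from counting elements of the semidirect product.

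I do not expect a serious obstacle here; the only subtle point is the well-definedness of the kernel description — namely verifying that $G_m(N)$ really is the set of $\left(\begin{smallmatrix} m^k & r \\ 0 & 1\end{smallmatrix}\right) \in G_m$ with $\ord_m(N) \mid k$ and $r \in N\Z[1/m]$, which amounts to unwinding that reduction mod $N$ of such a matrix is the identity in $GL_2(\Z/N\Z)$ precisely under these conditions. This uses that $\Z[1/m] \cap N\Z[1/m]$ behaves as expected, for which I would cite Lemma \ref{prop:reduction_mod_N_is_nice}. The computation of the multiplication law is routine $2\times 2$ matrix arithmetic, and the bijectivity onto the semidirect product is a matter of exhibiting inverse maps, so I would present those briefly.
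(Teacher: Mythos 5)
Your proposal is correct and follows essentially the same route as the paper: both consider the reduction-mod-$N$ homomorphism $G_m \to GL_2(\Z/N\Z)$, identify its kernel as $G_m(N)$ and its image as $\Z/N\Z \rtimes_m \Z/\ord_m(N)\Z$, and conclude by the first isomorphism theorem. You merely spell out in more detail the identification of the image and the compatibility with the semidirect product law, which the paper dismisses as "clear."
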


\begin{proof}
  Consider reduction modulo $N$: 
\[
  \begin{array}{cccl}
    \phi \colon & G_m & \to & GL_2(\Z/N\Z) \\
    & w=\begin{pmatrix} m^k & x \\ 0 & 1 \end{pmatrix} & \mapsto & \begin{pmatrix}
      [m^k] & [x] \\ [0] & [1]
    \end{pmatrix}.
  \end{array}
\]
The image of $\phi$ is clearly isomorphic to $\Z/N\Z\rtimes_m \Z/\ord_m(N)\Z$, and of order $N\cdot\ord_m(N)$.
Moreover we have 
\[
 w= \begin{pmatrix}
    m^k & x \\ 0 & 1
  \end{pmatrix} \in G_m(N) \iff m^k \equiv 1 \pmod N \text{ and } x \in N\Z\left[\tfrac{1}{m}\right] \iff \phi(w)=1.
\]

So $\ker(\phi)=G_m(N)$ and the result follows from the first isomorphism theorem.
\end{proof}

\begin{example}
  Consider $BS(1,2)$ and $N = 5$. Then $\ord_2(5) = 4$, and $G_2/G_2(5) \simeq \Z/5\Z \rtimes_2
  \Z/4\Z$. The Cayley graph of the quotient is the graph drawn below, using $a = (1,0)$ and $t = (0,1)$ as generators. Note that one still needs to identify the bottom line with the upper line, and the line to the left with the line to the right.
    \begin{center}
    \begin{tikzpicture}[scale=0.65]
      \foreach \k in {0, 2,..., 10} {
        \foreach \l in {0, 2,..., 8} {
          \draw (\k, \l) node[scale=0.8] {$\bullet$};
        }
      }
      \draw (0,0) node[below] {$1$};
      \draw (2, 0) node[below] {$a$};
      \draw (0, 2) node[left] {$t$};
      \draw[thick, ForestGreen] (0,0) -- (2,0) -- (4,0) -- (6,0) -- (8, 0) -- (10,0);
      \draw[thick, ForestGreen] (0,2) -- (2,2) -- (4,2) -- (6,2) -- (8, 2) -- (10,2);
      \draw[thick, ForestGreen] (0,4) -- (2,4) -- (4,4) -- (6,4) -- (8, 4) -- (10,4);
      \draw[thick, ForestGreen] (0,6) -- (2,6) -- (4,6) -- (6,6) -- (8, 6) -- (10,6);
      \draw[thick, ForestGreen] (0,8) -- (2,8) -- (4,8) -- (6,8) -- (8, 8) -- (10,8);
      \draw[thick, OrangeRed!80] (0,0) -- (0, 2) -- (0, 4) -- (0, 6) -- (0,8);
      \draw[thick, OrangeRed!80] (2, 0) -- (6, 2) -- (8, 4) -- (4, 6) -- (2, 8);
      \draw[thick, OrangeRed!80] (4, 0) -- (2, 2) -- (6, 4) -- (8, 6) -- (4, 8);
      \draw[thick, OrangeRed!80] (8, 0) -- (4, 2) -- (2, 4) -- (6, 6) -- (8, 8);
      \draw[thick, OrangeRed!80] (6,0) -- (8,2) -- (4, 4) -- (2, 6) -- (6,8);
      \draw[thick, OrangeRed!80] (10,0) -- (10, 2) -- (10, 4) -- (10, 6) -- (10,8);
    \end{tikzpicture}
  \end{center}
\end{example}

\medskip
Thanks to the familiar structure of the quotient $G_m/G_m(N)$ and Proposition 2.1 from \cite{burillo2015metric}, we are able to estimate the diameter of arithmetic box spaces of $BS(1,m)$.

\begin{lemma}
  \label{lemma:diameter-bounds}
  Let $N \geq 2$. Then
  \begin{equation}
    \label{eq:estimation-diameter}
    \diam(\cay(G_m/ G_m(N))) = \Theta(\ord_m(N)).
  \end{equation}
  More precisely, there
  exists a constant $C_m > 0$ such that
  \begin{equation}
    \label{eq:diameter:upper-bound}
    \frac{1}{3}\cdot \ord_m(N)\leq\diam(\cay(G_m / G_m(N))) \leq C_m \cdot \ord_m(N).
  \end{equation}
\end{lemma}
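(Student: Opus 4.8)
The plan is to translate the problem into the group $\BS$ via the isomorphism $BS(1,m)\simeq G_m$, use the quotient structure $G_m/G_m(N)\simeq \Z/N\Z\rtimes_m\Z/\ord_m(N)\Z$, and apply the normal-form/geodesic comparison of Proposition \ref{prop:burillo-elder}. Throughout, write $\beta=\ord_m(N)$. An element of $G_m/G_m(N)$ is a pair $(x,k)$ with $x\in\Z/N\Z$ and $k\in\Z/\beta\Z$; under the isomorphism with $BS(1,m)/\langle a^N,t^\beta\rangle^{\mathrm{normal}}$ (the image of $BS(1,m)$), it is represented by a word $t^{-i}a^\ell t^j$ of $\BS$ with $0\le i,j$ and $\ell\in\Z$. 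The diameter of the Cayley graph of the quotient is $\max_{g}\min\{\|w\|: w \text{ word in } \BS \text{ representing the class of } g\}$, where $\|\cdot\|$ is the word length in $\{a^{\pm1},t^{\pm1}\}$.

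For the \textbf{upper bound}, I would show every class in $G_m/G_m(N)$ has a representative word of length $O(\beta)$ in $\BS$. Given $(x,k)$, pick the representative $t^{-i}a^\ell t^j$ cleverly: we may take $j$ with $0\le j<\beta$ so that $m^j$ is congruent to the "$T$-component", then choose $\ell$ with $|\ell|\le N$ to realize $x$ (since $a^N$ maps into $G_m(N)$), and $i=0$. Then by the right-hand inequality of Proposition \ref{prop:burillo-elder}, $\|t^{-i}a^\ell t^j\|\le C_2(i+j+\log|\ell|)+D_2 = m(0+j+\log|\ell|)+m \le m(\beta+\log N+1)$. The remaining point is to bound $\log N$ by $O(\beta)$: here one uses $N\mid m^\beta-1$ (since $\ord_m(N)=\beta$ means $m^\beta\equiv1\pmod N$), hence $N\le m^\beta-1<m^\beta$, so $\log N<\beta\log m$. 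This gives $\diam\le m(\beta+\beta\log m+1)\le C_m\cdot\beta$ for a suitable $C_m$ depending only on $m$. One should double-check the subtlety in the normal form that $\ell$ being a multiple of $m$ forces $i=0$ or $j=0$, but since we are free to choose $i=0$ this causes no trouble.

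For the \textbf{lower bound}, I would exhibit a single element whose every representative is long. The natural candidate is the element $(0,k_0)$ corresponding to $t^{k_0}$ for $k_0=\lfloor\beta/2\rfloor$ (or more simply: consider the element $t^{\beta-1}$, whose class needs $T$-exponent summing to $\beta-1\bmod\beta$). Any word $w$ in $\BS$ representing this class has normal form $t^{-i}a^\ell t^j$ with $j-i\equiv \beta-1\pmod\beta$ (tracking the exponent sum of $t$, which is a homomorphism $\BS\to\Z$ and descends mod $\beta$ on the quotient). Hence $i+j\ge |j-i|\ge \min(\beta-1, 1)$ — this is too weak; instead pick $k_0=\lfloor\beta/2\rfloor$ so that both $j-i\equiv k_0$ and $j-i\equiv k_0$ force $i+j\ge$ distance from $k_0$ to the nearest multiple of $\beta$, which is $\approx\beta/2$. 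Then if $\ell\ne 0$, Proposition \ref{prop:burillo-elder} gives $\|w\|\ge C_1(i+j+\log|\ell|)-D_1\ge C_1\cdot\beta/2 - D_1$; if $\ell=0$ the word reduces to a power of $t$ of length $\ge\beta/2$. Combining with a clean constant accounting, one gets $\diam\ge\frac13\beta$ (the $\frac13$ absorbing $C_1$ and $D_1$ for $\beta$ large, and small $\beta$ checked directly). The \textbf{main obstacle} is making the lower bound genuinely uniform: Proposition \ref{prop:burillo-elder} gives only $\|w\|\ge C_1(i+j+\log|\ell|)-D_1$ with unspecified $C_1,D_1$, so one must be careful that the additive error $D_1$ and multiplicative $C_1$ do not spoil the explicit constant $\frac13$; the fix is either to use the explicit $C_2=D_2=m$ side for the upper bound (which we did) and accept a clean but not-optimized $C_1$-based bound for the lower one, or to argue directly in $G_m/G_m(N)$ that the $t$-exponent forces length $\ge$ (distance in $\Z/\beta\Z$ from $0$ to $k_0$) without invoking $C_1$ at all, since each letter changes the $t$-exponent by at most one — this last observation is elementary and gives the sharp $\lfloor\beta/2\rfloor\ge\beta/3$ bound cleanly.
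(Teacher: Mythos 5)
Your proposal is correct and follows essentially the same route as the paper: the upper bound via the normal form $A^xT^k$ with $0\le x<N$, $0\le k<\ord_m(N)$, the explicit constants $C_2=D_2=m$ from Proposition \ref{prop:burillo-elder}, and the bound $N\le m^{\ord_m(N)}$; and the lower bound via the $t$-exponent, which is exactly the paper's projection onto the cyclic quotient $\Z/\ord_m(N)\Z$. Your closing remark that the $t$-exponent argument avoids the unspecified constants $C_1,D_1$ is precisely why the paper states the $\frac{1}{3}\ord_m(N)$ bound that way.
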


\begin{proof}
  Let $m \geq 2$ and consider $\BS \simeq G_m \subset \mathsf{GL}_2(\Z[1/m])$.
  Recall that $G_m/G_m(N) \simeq \Z/N\Z \rtimes_m \Z/\ord_m(N)\Z$ so that
  $\diam(G_m/G_m(N)) = \diam(\Z/N\Z \rtimes_m \Z/\ord_m(N)\Z) \geq \diam(\Z/\ord_m(N)\Z)$.
  Since the Cayley graph of $\Z/\ord_m(N)\Z$ is a
  cycle, we can roughly estimate the diameter to obtain
    \begin{equation}
      \diam(G_m/G_m(N)) \geq \frac{1}{3} \ord_m(N).
    \end{equation}

    For the second inequality, let $([x], [k]) \in \Z/N\Z \rtimes_m \Z/\ord_m(N)\Z$ be an element realising the diameter. We rewrite
    $([x], [k])$ as $\big( \begin{smallmatrix} m^k & x \\ 0 & 1 \end{smallmatrix} \big)G_m(N)$. The induced
    metrics are always smaller in a quotient, thus
    \[ \left\| \begin{pmatrix} m^k & x \\ 0 & 1 \end{pmatrix} G_m(N) \right\|_{G_m/G_m(N)} \leq
      \left\| \begin{pmatrix} m^k & x \\ 0 & 1 \end{pmatrix} \right\|_{G_m}.\] Recall that any word
    $\omega \in \BS$ can be written in normal form as $\omega = t^{-i} a^{\ell} t^j$. In the quotient
    $G_m/G_m(N)$, the situation is even simpler, as by the semi-direct product stucture every word can be written as $A^\ell T^j$ with
    $0 \leq \ell < N$ and $0 \leq j < \ord_m(N)$.
    With $\ell = x$ and $j = k$, we identify $\big( \begin{smallmatrix} m^k & x \\ 0 & 1 \end{smallmatrix} \big)$ with the element $A^xT^k$ in normal form in $G_m$.
    If $x=0$ we get $T^k$ and
    \[ \|T^k\|_{G_m} = k < \ord_m(N). \] Assume that $x \neq 0$. From Proposition
    \ref{prop:burillo-elder}, we obtain
    \begin{equation}
      \label{eq:length-1}
      \left\| \begin{pmatrix} m^k & x \\ 0 & 1 \end{pmatrix} \right\|_{G_m} = \|A^x T^k\|_{G_m} \leq 2m(k +
      \log x + 1).
    \end{equation}
    Note that since $m^{\ord_m(N)} \geq N$ (equivalently 
    $\log(N) \leq \ord_m(N) \cdot \log(m)$) and moreover
    $\log x \leq \log(N)$, Eq.~(\ref{eq:length-1}) becomes
    \begin{equation}
      \label{eq:length-2}
      \left\| \begin{pmatrix} m^k & x \\ 0 & 1 \end{pmatrix} \right\|_{G_m}
      \leq 2m(2 + \log(m)) \ord_m(N).
    \end{equation}
    Setting $C_m := 2m(2 + \log(m))$, we obtain
    \begin{equation}
      \diam(\cay(G_m/G_m(N))) \leq C_m \cdot \ord_m(N).
    \end{equation} 

\end{proof}

\begin{proposition}\label{prop:charactD_alpha}
  An arithmetic box space $\square_{(G_m(N_k))_k}G_m$ has property $D_\alpha$ if and only if $$\ord_m(N_k)=\Omega(N_k^{\frac{\alpha}{1-\alpha}}).$$
\end{proposition}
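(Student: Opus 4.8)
The plan is to combine two facts already in hand: the order formula $|G_m/G_m(N_k)|=N_k\cdot\ord_m(N_k)$ from~(\ref{eq:quotient_Gamma}), and the two-sided estimate $\frac13\ord_m(N_k)\le\diam(\cay(G_m/G_m(N_k)))\le C_m\ord_m(N_k)$ of Lemma~\ref{lemma:diameter-bounds}. Once these are inserted into the defining inequality~(\ref{eq:property_D_alpha}) of property $D_\alpha$, what remains is an elementary manipulation of exponents; there is no genuinely hard step, only careful bookkeeping of the (fixed, positive) constants. We assume $\alpha<1$ so that $\tfrac{\alpha}{1-\alpha}$ is defined; the case $\alpha=1$ is $D_1$, which never holds here since $G_m$ is not virtually cyclic, in agreement with the bound $\ord_m(N_k)<N_k$ which already precludes $\ord_m(N_k)=\Omega(N_k^s)$ for $s\ge 1$.

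First I would unwind $D_\alpha$: it asks for a constant $C>0$ with $\diam(\cay(G_m/G_m(N_k)))\ge C\,(N_k\ord_m(N_k))^\alpha$ for all $k$. For the reverse implication one uses $\diam\ge\frac13\ord_m(N_k)$, and for the direct implication one uses $\diam\le C_m\ord_m(N_k)$; in this way $D_\alpha$ is seen to be equivalent to the existence of a constant $c>0$ with $\ord_m(N_k)\ge c\,(N_k\ord_m(N_k))^\alpha$ for all $k$, i.e.\ to $\ord_m(N_k)=\Omega\bigl((N_k\ord_m(N_k))^\alpha\bigr)$.

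It then remains to simplify this last condition. Dividing by $\ord_m(N_k)^\alpha$ turns $\ord_m(N_k)\ge c\,(N_k\ord_m(N_k))^\alpha$ into $\ord_m(N_k)^{1-\alpha}\ge c\,N_k^\alpha$, and since $1-\alpha>0$ this is equivalent to $\ord_m(N_k)\ge c^{1/(1-\alpha)}N_k^{\alpha/(1-\alpha)}$; conversely, starting from $\ord_m(N_k)\ge c'\,N_k^{\alpha/(1-\alpha)}$ one raises to the power $1-\alpha$ and multiplies back by $\ord_m(N_k)^\alpha$ to recover an inequality of the previous shape. Hence $D_\alpha$ holds if and only if $\ord_m(N_k)=\Omega(N_k^{\alpha/(1-\alpha)})$, as claimed. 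The only subtlety, and the extent of the difficulty, is that all the constants involved ($\tfrac13$, $C_m$, $\alpha$, $1-\alpha$) are independent of $k$, so passing through the powers $1-\alpha$ and $\tfrac1{1-\alpha}$ preserves the $\Omega$-relations uniformly in $k$.
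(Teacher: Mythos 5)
Your proof is correct and follows the same route as the paper: both arguments substitute the two-sided diameter estimate of Lemma~\ref{lemma:diameter-bounds} and the order formula $|G_m/G_m(N_k)|=N_k\cdot\ord_m(N_k)$ into the definition of $D_\alpha$, and then rearrange $\ord_m(N_k)=\Omega\bigl((N_k\ord_m(N_k))^\alpha\bigr)$ into $\ord_m(N_k)=\Omega(N_k^{\alpha/(1-\alpha)})$. Your version merely spells out the exponent bookkeeping (and the $\alpha=1$ edge case) that the paper leaves implicit.
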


\begin{proof} Using Lemma \ref{lemma:diameter-bounds}:
$$\square_{(G_m(N_k))_k}G_m\,\mbox{has}\,D_\alpha \iff \diam(G_m/G_m(N_k))=\Omega(|G_m/G_m(N_k)|^\alpha)$$
$$\iff \ord_m(N_k)=\Omega(N_k^\alpha\cdot \ord_m(N_k)^\alpha)\iff \ord_m(N_k)=\Omega(N_k^{\frac{\alpha}{1-\alpha}}).$$
  \end{proof}

We present here the main structure theorem for the arithmetic box spaces of $\BS$.

\begin{theorem}
  \label{thm:theorem-principal-1}
  For any $m \geq 2$, the following statements hold~:
  \begin{enumerate}
  \item \label{item:D_alpha_smaller_than_1_2} if an arithmetic box space $\square_{(G_m(N_k))_k}G_m$
    has property $D_\alpha$, then $\alpha \leq \frac{1}{2}$,
    
  \item \label{item:box_space_with_D_1_2}  there exists an arithmetic box space with property $D_{1/2}$,
    
  \item \label{item:box_space_without_D_alpha} there exists an arithmetic box space of
    $G_m$ without property $D_\alpha$ for any $\alpha \in ]0, 1/2]$.
    \item Fix $\alpha<1$. Every arithmetic box space of $G_m$ is covered by some box space with $D_\alpha$.
  \end{enumerate}
\end{theorem}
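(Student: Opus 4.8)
The plan is to reduce parts~(1)--(3) to elementary number theory through Proposition~\ref{prop:charactD_alpha}: for $\alpha<1$ the box space $\square_{(G_m(N_k))_k}G_m$ has $D_\alpha$ exactly when $\ord_m(N_k)=\Omega\big(N_k^{\alpha/(1-\alpha)}\big)$, so everything comes down to realising, or to obstructing, such a lower bound along a divisibility chain $(N_k)$ of moduli coprime to $m$. I will use throughout that $\ord_m(N)\mid\varphi(N)$, hence $\ord_m(N)<N$, and that the levels of any box space satisfy $N_k\to\infty$ (a bounded divisibility chain stabilises, contradicting $\bigcap_kG_m(N_k)=\{1\}$ since $G_m$ is infinite). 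For~(1): if $D_\alpha$ holds with $\alpha=1$ then by Theorem~\ref{thm:property_D_alpha_and_cyclic_groups}(3) the group $G_m\cong BS(1,m)$ would be virtually cyclic, which it is not (it contains the non-finitely-generated subgroup $\Z[1/m]$); and if $D_\alpha$ holds with $\alpha<1$ then $\ord_m(N_k)\ge cN_k^{\alpha/(1-\alpha)}$ for some $c>0$ and all large $k$, which together with $\ord_m(N_k)<N_k$ and $N_k\to\infty$ is possible only when $\alpha/(1-\alpha)\le1$, i.e.\ $\alpha\le\tfrac12$.

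For~(2): I will take a prime $p\nmid m$ and $N_k=p^k$, a divisibility chain of moduli coprime to $m$ with trivial intersection; Lemma~\ref{lem:lowerboundonratio} applied to $P=\{p\}$ then gives $\ord_m(N_k)/N_k\ge C(m,\{p\})>0$, so $\ord_m(N_k)=\Omega(N_k)$, and since $\tfrac{1/2}{1-1/2}=1$ Proposition~\ref{prop:charactD_alpha} gives $D_{1/2}$. For~(3) the plan is to make $\ord_m(N_k)$ negligible against every positive power of $N_k$, which I will achieve with $N_k:=m^{k!}-1$: here $\gcd(N_k,m)=1$, and $N_k\mid N_{k+1}$ because $k!\mid(k+1)!$ implies $m^{k!}-1\mid m^{(k+1)!}-1$; as $N_k\to\infty$ one has $\bigcap_kG_m(N_k)=\{1\}$, so this is a genuine arithmetic box space. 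Since $m^{k!}\equiv1\pmod{N_k}$ we get $\ord_m(N_k)\mid k!$, hence $\ord_m(N_k)\le k!$, whereas $N_k\ge m^{k!}/2$; therefore for every $\varepsilon>0$,
\[ \frac{\ord_m(N_k)}{N_k^{\varepsilon}}\;\le\;\frac{2^{\varepsilon}\,k!}{m^{\varepsilon\,k!}}\;\longrightarrow\;0\qquad(k\to\infty), \]
so $\ord_m(N_k)\neq\Omega(N_k^{\varepsilon})$ for any $\varepsilon>0$. Taking $\varepsilon=\alpha/(1-\alpha)$ and applying Proposition~\ref{prop:charactD_alpha} will show this box space fails $D_\alpha$ for every $\alpha\in\,]0,1[$, in particular for every $\alpha\in\,]0,\tfrac12]$.

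For~(4) (equivalently Proposition~\ref{covering}), fix $0<\alpha<1$ and an arithmetic box space $\square_{(G_m(N_k))_k}G_m$. Let $\pi\colon G_m\to\Z$ be the homomorphism $\big(\begin{smallmatrix}m^k&x\\0&1\end{smallmatrix}\big)\mapsto k$ (so $\pi(A)=0$ and $\pi(T)=1$). I will choose a divisibility chain $s_1\mid s_2\mid\cdots$ of positive integers with $s_k\ge\big(3N_k^{2\alpha}\big)^{1/(1-\alpha)}$ for every $k$ — possible by a recursive choice, e.g.\ inside the powers of $2$ — and set $H_k:=G_m(N_k)\cap\pi^{-1}(s_k\Z)$. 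Then each $H_k$ is normal of finite index, $H_{k+1}\subseteq H_k$, $H_k\subseteq G_m(N_k)$, and $\bigcap_kH_k\subseteq\bigcap_kG_m(N_k)=\{1\}$, so $\square_{(H_k)}G_m$ is a box space covering the given one. Since $H_k\subseteq\pi^{-1}(s_k\Z)$, the quotient $G_m/H_k$ surjects onto $\Z/s_k\Z$, whose Cayley graph on the images of the generators is the cycle $C_{s_k}$; as quotient maps do not increase distances, $\diam(\cay(G_m/H_k))\ge\lfloor s_k/2\rfloor\ge s_k/3$. On the other hand $|G_m/H_k|=N_k\,\ord_m(N_k)\cdot s_k/\gcd(s_k,\ord_m(N_k))\le N_k^2s_k$, using $\ord_m(N_k)\le N_k$. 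Combining, $\diam(\cay(G_m/H_k))\ge s_k/3\ge(N_k^2s_k)^\alpha\ge|G_m/H_k|^\alpha$ by the choice of $s_k$, so $\square_{(H_k)}G_m$ has $D_\alpha$.

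Parts~(1) and~(2) are routine once Proposition~\ref{prop:charactD_alpha} is available, and in~(3) the only genuine idea is the modulus $m^{k!}-1$, of order at most $k!$ yet itself doubly-exponentially large; the rest is bookkeeping. The main obstacle is~(4): one has to see that the congruence filtration should be left intact and instead be "pinched" by intersecting with the kernels of $G_m\to\Z\to\Z/s_k\Z$ for a divisibility chain $(s_k)$ chosen so large that the forced cyclic quotient $\Z/s_k\Z$, of diameter $\asymp s_k$, dominates the polynomially bounded order $|G_m/H_k|\le N_k^2s_k$, while the $H_k$ still nest; the remaining point — arranging $(s_k)$ to be simultaneously a divisibility chain and to outgrow $N_k^{2\alpha/(1-\alpha)}$ — is an easy recursion.
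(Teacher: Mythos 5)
Your proposal is correct and follows essentially the same route as the paper: part (1) via $\ord_m(N_k)\le N_k$ and Proposition~\ref{prop:charactD_alpha}, parts (2)--(3) via explicit divisibility chains whose orders are respectively $\Omega(N_k)$ and $o(N_k^{\varepsilon})$, and part (4) by shrinking each $G_m(N_k)$ inside the kernel of a map onto a huge cyclic group $\Z/s_k\Z$ whose diameter dominates $|G_m/H_k|^{\alpha}$, exactly as in the paper's adaptation of Khukhro--Valette. The only (harmless) deviations are cosmetic: $p^k$ with Lemma~\ref{lem:lowerboundonratio} in place of $(m^2-1)^k$, $m^{k!}-1$ in place of $m^{2^k}-1$, and an auxiliary chain $(s_k)$ in place of the paper's $n_k=\ord_m(N_k)N_k^{D}$.
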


\begin{proof}
  \begin{enumerate}
  \item If $\square_{(G_m(N_k))_k}G_m$
    has property $D_\alpha$, using $N_k\geq\ord_m(N_k)$ and Proposition \ref{prop:charactD_alpha}, we get $N_k=\Omega(N_k^{\frac{\alpha}{1-\alpha}})$, which forces $\alpha\leq\frac{1}{2}$.

  \item Let $(N_k)_k \subset \N$ be the sequence defined by $N_k = (m^2-1)^k$. Clearly, $N_k \mid N_{k+1}$ for every $k$. We apply Lemma \ref{lem:orders} with $N=m^2-1$, so that $\ord_m(m^2-1)=2$ and $\mu=1$.
  We thus obtain:
    \begin{equation}
      \label{eq:order_D1_2}
      \ord_m(N_k) = 2 \cdot (m^2-1)^{k-1}, \forall k \geq 1.
    \end{equation}
    i.e. $\ord_m(N_k)=\Omega(N_k)$. By Proposition \ref{prop:charactD_alpha} the box space $\square_{(G_m(N_k))_k}G_m$ has property $D_{1/2}$.

  \item We consider the sequence $(N_k)_k$ defined by $N_k = m^{2^k}-1$ and prove that the arithmetic box space
    $\square_{(G_m(N_k))_k}G_m$ does not have property $D_\alpha$ for any $\alpha \in ]0,1/2]$. It is
    straightforward that 
    $N_k \mid N_{k+1}$ for every $k$, and $\ord_m(N_k) = 2^k$. 
    We have
    \[ \lim_{k \to \infty} \frac{\ord_m(N_k)}{N_k^\frac{\alpha}{1-\alpha}} = \lim_{k \to \infty}
      \frac{2^k}{(m^{2^k}-1)^{\frac{\alpha}{1-\alpha}}} = 0, \]
    i.e. $\ord_m(N_k)=o(N_k^{\frac{\alpha}{1-\alpha}})$. By Proposition \ref{prop:charactD_alpha} this shows that the arithmetic box space
    $\square_{(G_m(m^{2^k}-1))_k}G_m$ does not have property $D_\alpha$ for any $\alpha \in ]0,1/2]$.
    
    \item We adapt the proof of Theorem 3 in \cite{khukhro2017expanders}. Pick an integer $D>0$ with $\frac{D}{D+1}\ge\alpha$. Let $\square_{G_m(N_k)}G_m$ be any arithmetic box space of $G_m$. Define 
    $$n_k=\ord_m(N_k)\cdot N_k^D$$
    and
    $$M_k=\left\{\left(\begin{array}{cc}m^{\ell n_k}  & r \\0 & 1\end{array}\right):\ell\in\Z,\;r\in N_k\Z[\tfrac{1}{m}]\right\}.$$
    It is readily checked that $M_k$ is a subgroup and, because $\ord_m(N_k)|n_k$, that $M_k$ is normal in $G_m$ and is contained in $G(N_k)$. As $n_k|n_{k+1}$, we have that $\square_{(M_k)}G_m$ is a box space which covers $\square_{(G_m(N_k))}G_m$. 
    
    It remains to check that $\square_{(M_k)}G_m$ has property $D_\alpha$. But $G/M_k$ maps onto the cyclic group $\Z/n_k\Z$ so we have
    $$\diam(G/M_k)\ge\diam(\Z/n_k\Z)\ge \frac{n_k}{3}.$$
    On the other hand
    $$|G/M_k|^\alpha =n_k^\alpha N_k^\alpha=\ord_m(N_k)^\alpha N_k^{\alpha(D+1)}\le
    \ord_m(N_k)N_k^D=n_k.$$
    This concludes the proof.
    
  \end{enumerate}

  \end{proof}

\subsection{Density results}\label{densityresults}
  
A natural question after encountering the constructions of Theorem \ref{thm:theorem-principal-1}.\ref{item:box_space_with_D_1_2} and
\ref{thm:theorem-principal-1}.\ref{item:box_space_without_D_alpha} is ``how many arithmetic box spaces of $\BS$ have $D_{1/2}$''? In the
following paragraphs, we give a partial answer to this question.

Let $(N_k)_k \subset \N$ be such that $N_k \mid N_{k+1}$ for every $k > 0$, and denote by $P_k$ the set of
prime factors of $N_k$. Moreover, we define the set of prime factors of the sequence $(N_k)_k$ by
\begin{equation}
  \label{eq:prime_factors_sequence}
  P := \bigcup_{k=1}^{+\infty} P_k.
\end{equation}

Before stating our main result from this section, we need to introduce some definitions about the density of
prime numbers. We follow Powell~\cite{powell1980primitive} for the terminology.

\begin{definition}
  Let $P \subset \mathcal{P}$ be a subset of the prime numbers. The \textit{natural primitive density} of $P$ is
  (if the limit exists) 
  \[ d'(P) := \lim_{N \to \infty} \frac{\left| \{p \leq N \mid p \in P\}\right|}{\left| \{p \leq N \mid p \in
        \mathcal{P}\}\right|}. \]
  The \textit{analytic primitive density} of $P$ is (if the limit exists) 
  \[ D'(P) = \lim_{s \to 1^+} \frac{\sum_{p \in P}^{} \frac{1}{p^s}}{\sum_{p \in \mathcal{P}}^{} \frac{1}{p^s}}. \]
\end{definition}

If $P$ is finite then $d'(P)=D'(P)=0$. Suppose now that $D'(P) > 0$. In this case, we see that $\sum_{p \in P}^{}
  \frac{1}{p} = + \infty$, otherwise $D'(P)$ would be equal to $0$. Observe that 
  \[ \prod_{p \in P} \left(1 - \frac{1}{p}\right) = 0 \iff \sum_{p \in P}^{} \ln\left( 1 - \frac{1}{p}
    \right) = - \infty. \]
  But using that $\ln(1 + x) \leq x$ for $x > -1$
  \[ \sum_{p \in P}^{} \ln \left( 1 - \frac{1}{p}\right) \leq - \sum_{p \in P}^{} \frac{1}{p} = - \infty.\]
  Therefore, we obtain
  \begin{equation}
    \label{eq:prod_goes_to_0}
    \prod_{p \in P} \left(1 - \frac{1}{p}\right) = 0
  \end{equation}
  if $D'(P)>0$.

  \begin{theorem}
    \label{thm:density_results}
  Let $\square_{(G_m(N_k))_k}G_m$ be an arithmetic box space, and let $P$ be the set of prime factors of
  the sequence $(N_k)_k$.
  \begin{enumerate}
  \item If $|P| < + \infty$, then $\square_{(G_m(N_k))_k}G_m$ has $D_{1/2}$~;
  \item If $D'(P) > 0$, then $\square_{(G_m(N_k))_k}G_m$ does not have $D_\alpha$ for every $\alpha>0$.
  \end{enumerate}
\end{theorem}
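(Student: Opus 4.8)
The plan is to reduce both parts to the characterisation in Proposition~\ref{prop:charactD_alpha}, namely that $\square_{(G_m(N_k))_k}G_m$ has $D_\alpha$ if and only if $\ord_m(N_k)=\Omega(N_k^{\alpha/(1-\alpha)})$. For part (1), when $P$ is finite, Lemma~\ref{lem:lowerboundonratio} gives a constant $C(m,P)>0$ with $\ord_m(N_k)/N_k\geq C(m,P)$ for every $k$, since every prime factor of $N_k$ lies in $P$. Hence $\ord_m(N_k)=\Omega(N_k)=\Omega(N_k^{(1/2)/(1-1/2)})$, and Proposition~\ref{prop:charactD_alpha} immediately yields property $D_{1/2}$.

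For part (2), assume $D'(P)>0$; I want to show $\ord_m(N_k)=o(N_k^{\alpha/(1-\alpha)})$ fails only because in fact $\ord_m(N_k)/N_k^\epsilon\to 0$ for every $\epsilon>0$, which rules out $D_\alpha$ for all $\alpha>0$ at once. The key point is that $\ord_m(N_k)$ divides $|(\Z/N_k\Z)^\times|=\phi(N_k)$, hence $\ord_m(N_k)\leq\phi(N_k)=N_k\prod_{p\mid N_k}(1-1/p)$. As $k\to\infty$ the set of prime divisors of $N_k$ exhausts $P$ (because $N_k\mid N_{k+1}$, the prime sets $P_k$ are increasing with union $P$), and by Eq.~(\ref{eq:prod_goes_to_0}) the product $\prod_{p\in P}(1-1/p)=0$. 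Therefore $\prod_{p\mid N_k}(1-1/p)\to 0$, so $\ord_m(N_k)/N_k\to 0$. This already shows $\square_{(G_m(N_k))_k}G_m$ does not have $D_{1/2}$; to get the stronger conclusion for every $\alpha>0$, I would instead argue that $\ord_m(N_k)=o(N_k^{\alpha/(1-\alpha)})$ for each fixed $\alpha$, equivalently (setting $\gamma=\alpha/(1-\alpha)>0$) that $\ord_m(N_k)/N_k^{\gamma}\to 0$.

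The main obstacle is precisely this last refinement: knowing only $\ord_m(N_k)\leq\phi(N_k)$ gives a bound that beats $N_k^\gamma$ only when $\gamma$ is close to $1$, since $\phi(N_k)$ can be as large as a constant times $N_k/\log\log N_k$, which is not $o(N_k^\gamma)$ for small $\gamma$. To overcome this I would exploit that the $N_k$ are a divisibility chain with infinitely many new prime factors (or a prime factor to an increasing power). Concretely, fix a large threshold: since $\prod_{p\in P}(1-1/p)=0$, for any $\delta>0$ there is a finite subset $P_0\subset P$ with $\prod_{p\in P_0}(1-1/p)<\delta$, and for $k$ large enough $P_0\subseteq P_k$, so $\ord_m(N_k)\leq\phi(N_k)\leq N_k\prod_{p\in P_0}(1-1/p)<\delta N_k$. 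If moreover $N_k\to\infty$ (which holds since the chain is strictly increasing once $|P|=\infty$, or else a fixed prime's exponent grows), then for the fixed $\alpha$ we have $N_k^{1-\gamma}\to\infty$ when $\gamma<1$, so $\ord_m(N_k)<\delta N_k=\delta N_k^{\gamma}\cdot N_k^{1-\gamma}$; this does \emph{not} directly give $o(N_k^\gamma)$. The correct fix is to use that $\ord_m(N_k)$ is bounded by $\lambda(N_k)$, the Carmichael function, and more usefully by $\ord_m(p_1^{\beta_1})\cdots$ combined via $\lcm$; but the cleanest route is: given $\gamma<1$ choose $\delta$ and note $\ord_m(N_k)\le N_k\prod_{p\in P_k}(1-1/p)$, and since this product tends to $0$ faster than any power is false in general, one must instead show $N_k$ grows fast relative to the number of its prime factors. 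I expect to invoke that in a divisibility chain $N_k\mid N_{k+1}$ with $D'(P)>0$, one may pass to a subsequence where $N_k\geq 2^{\,c\,\pi_P(N_k)}$ or similar, making $\phi(N_k)=o(N_k^\gamma)$; pinning down this growth estimate is the crux of the argument, and I would handle it by combining the prime-counting lower bound for $\sum_{p\in P,\,p\le x}1/p$ coming from $D'(P)>0$ (which gives $\sum\asymp\log\log x$) with the elementary inequality relating $\phi(N)/N$ to that partial sum.
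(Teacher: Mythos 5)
Part (1) of your proposal coincides with the paper's proof: Lemma~\ref{lem:lowerboundonratio} gives $\ord_m(N_k)=\Omega(N_k)$ and Proposition~\ref{prop:charactD_alpha} with $\alpha/(1-\alpha)=1$ does the rest. For part (2), the portion you actually complete --- $\ord_m(N_k)\le\phi(N_k)=N_k\prod_{p\mid N_k}(1-1/p)$, the prime sets $P_k$ increase to $P$, and Eq.~(\ref{eq:prod_goes_to_0}) then forces $\ord_m(N_k)=o(N_k)$, hence failure of $D_{1/2}$ --- is also precisely the paper's argument. Your diagnosis of what is still missing is correct: for $\alpha<1/2$ one must exclude $\ord_m(N_k)=\Omega(N_k^{\gamma})$ with $\gamma=\alpha/(1-\alpha)<1$, and $o(N_k)$ does not do that; moreover no argument resting only on $\ord_m(N)\le\phi(N)$ can possibly do it, since $\phi(N)/N\ge c/\log\log N$ by Mertens' theorem, so $\phi(N_k)$ is never $o(N_k^{\gamma})$ for $\gamma<1$. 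You should be aware that the paper's own proof stops exactly where yours does: it concludes the full statement from $\ord_m(N_k)=o(N_k)$ via Proposition~\ref{prop:charactD_alpha}, which only negates the case $\gamma\ge 1$, i.e.\ $\alpha\ge 1/2$ (the range $\alpha>1/2$ being in any case already excluded by Theorem~\ref{thm:theorem-principal-1}).

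None of the repairs you sketch closes this gap. Passing to the Carmichael function saves at most a factor $2^{\omega-1}$, where $\omega$ is the number of distinct odd prime factors of $N_k$, hence a factor polynomial in $\log N_k$; the estimate $\sum_{p\in P,\,p\le x}1/p\asymp\log\log x$ coming from $D'(P)>0$ again only improves $\phi(N_k)/N_k$ by iterated logarithms; and a lower bound of the shape $N_k\ge 2^{c\,\omega}$ is automatic and gives nothing. The genuine obstruction is that $\ord_m(N_k)$ is the $\lcm$ of the $\ord_m(p^{\beta_p})$, and for a positive-density family of primes these orders typically carry large, pairwise distinct prime factors, so this $\lcm$ may well be of size $N_k^{c}$ for a fixed $c>0$. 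As written, your proposal (like the published proof) establishes the conclusion only for $\alpha\ge 1/2$; handling $\alpha<1/2$ requires an input that is genuinely absent from both your argument and the paper's, or else a weakening of the statement to the failure of $D_{1/2}$.
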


\begin{proof}
In view of Proposition \ref{prop:charactD_alpha}, we must study the asymptotics of the quotient
  $\frac{\ord_m(N_k)}{N_k}$. 
  \begin{enumerate} 
    \item By Lemma \ref{lem:lowerboundonratio}, there exists a constant $C(m,P)$ such that $\frac{\ord_m(N_k)}{N_k}\geq C(m,P)$, i.e. $\ord_m(N_k)=\Omega(N_k)$. Proposition \ref{prop:charactD_alpha} applies to show that $\square_{(G_m(N_k))_k}G_m$ has $D_{1/2}$.
  
\item Assume now $D'(P)>0$, pick $N=p_1^{\beta_1}...p_k^{\beta_k}$ with $p_1,...,p_k\in P$. We have $\ord_m(N)\leq \phi(N)=|(\Z/N\Z)^\times|$, where $\phi$ denotes Euler's totient function. Then
$$\frac{\ord_m(N)}{N}\leq \prod_{i=1}^k\frac{\phi(p_i^{\beta_i})}{p_i^{\beta_i}}=\prod_{i=1}^k\left(1-\frac{1}{p_i}\right).$$
In view of Eq.~(\ref{eq:prod_goes_to_0}) we then get $\ord_m(N_k)=o(N_k)$, which proves the second part of the theorem thanks to Proposition~\ref{prop:charactD_alpha}.
  
\end{enumerate}
\end{proof}

We state here a few open questions related to the previous theorem. 
\begin{itemize}
    \item If we assume that $d'(P) > 0$, does the
associated arithmetic box space have $D_\alpha$ for some $\alpha \in ]0, 1/2[$ or not?
\item What happens in the case $|P| = + \infty$ and $D'(P) = 0$?
\item Given $\alpha \in ]0,1/2]$, can we create an arithmetic box space with
exactly $D_\alpha$?
\end{itemize}


\section{CSP for $BS(1,m)$}
\label{sec:csp-bs1-m}

For a subring $A$ of $\Q$, we define three subgroups of $GL_n(A)$:
\begin{itemize}
    \item $T_n(A)$, the subgroup of upper triangular matrices;
    \item $U_n(A)$, the unipotent subgroup, i.e. the subgroup of $T_n(A)$ consisting of matrices with 1's down the diagonal;
    \item $D_n(A)$, the subgroup of diagonal matrices.
\end{itemize}
The map $\Delta:T_n(A)\rightarrow D_n(A)$ taking a matrix to its diagonal, is a surjective group homomorphism with kernel $U_n(A)$. 

We will also need the set $N_n(A)$ of upper triangular nilpotent matrices, i.e. upper triangular matrices with 0's down the diagonal. Note that $T_n(A)={\bf 1}_n + N_n(A)$.

\subsection{Representations of $A$ into $U_n(A)$}

We recall Chernikov's theorem (see Theorem 4.10 in \cite{Warfield}): if $N$ is a torsion-free nilpotent group,
for every $k\geq 1$ the map $N\rightarrow N:x\mapsto x^k$ is injective.

The first lemma discusses one-parameter subgroups in $U_n(A)$.

\begin{lemma}\label{oneparameter} For every $g\in U_n(A)$, there exists a unique homomorphism $\alpha:A\rightarrow U_n(A)$ such that $\alpha(1)=g$.
\end{lemma}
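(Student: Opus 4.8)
The idea is to use the fact that $U_n(A)$ is a torsion-free nilpotent group together with the theory of Mal'cev coordinates / the exponential map, specialised to the concrete situation $U_n(A) = \mathbf{1}_n + N_n(A)$. First I would establish \emph{uniqueness}: if $\alpha,\beta:A\to U_n(A)$ are two homomorphisms with $\alpha(1)=\beta(1)=g$, then for every rational $p/q$ (with $p\in\Z$, $q\in\N^\ast$, $p/q\in A$) we have $\alpha(p/q)^q=\alpha(p)=\beta(p)=\beta(p/q)^q$, and since taking $q$-th powers is injective in the torsion-free nilpotent group $U_n(A)$ by Chernikov's theorem, $\alpha(p/q)=\beta(p/q)$. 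As $A\subseteq\Q$, this already pins down $\alpha$ on all of $A$.

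For \emph{existence}, I would exploit that $N_n(A)$ is a nilpotent (associative, hence Lie) algebra in which $X^n=0$ for all $X\in N_n(A)$, so that the truncated exponential $\exp(X)=\sum_{j=0}^{n-1}\frac{X^j}{j!}$ and logarithm $\log(\mathbf{1}_n+X)=\sum_{j=1}^{n-1}\frac{(-1)^{j-1}}{j}X^j$ are mutually inverse bijections $N_n(A)\otimes\Q \leftrightarrow U_n(A)\otimes\Q$ — here I pass to $\Q$-coefficients to make sense of the denominators $j!$. Write $g=\exp(Y)$ with $Y=\log(g)\in N_n(\Q)$, and define $\alpha(s)=\exp(sY)$ for $s\in A$. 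Since all the $sY$ lie in the single abelian (indeed, one-dimensional over $\Q$) Lie subalgebra $\Q Y$, the Baker–Campbell–Hausdorff formula degenerates to $\exp(sY)\exp(tY)=\exp((s+t)Y)$, so $\alpha$ is a homomorphism with $\alpha(1)=g$. The one genuine verification is that $\alpha$ actually lands in $U_n(A)$ and not merely in $U_n(\Q)$: here I would argue that, because $g\in U_n(A)$, its matrix powers $g^k$ ($k\in\Z$) lie in $U_n(A)$, and $\alpha(k)=g^k$; then for $s=p/q\in A$, $\alpha(s)$ is the unique $q$-th root of $g^p$ in $U_n(\Q)$, and one must see this root already has entries in $A$. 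This can be done entrywise by induction on the superdiagonals: solving $x^q=g^p$ in $U_n$ amounts, on each superdiagonal, to dividing an $A$-polynomial expression in lower entries by $q$, and the resulting quotient lies in $A$ precisely because $s=p/q\in A$ means $1/q$ times things already reachable — more cleanly, since $q\mid p$ is false in general but $q$ is invertible in the localisation $A$ (as $p/q\in A$ forces... ) — the cleanest route is instead: $\alpha(s)$ for $s\in A$ can be written as $\prod$ of $\alpha(1/q_i)$-type factors, so it suffices to check $\alpha(1/q)\in U_n(A)$ whenever $1/q\in A$, and since $1/q\in A$ the ring $A$ contains $\Z[1/q]$, so all the relevant denominators are units in $A$.

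The main obstacle I anticipate is exactly this last integrality point — ensuring $\alpha(s)\in U_n(A)$ rather than just $U_n(\Q)$. For the application in the paper $A=\Z[1/m]$, so the subtlety is real but manageable: the relevant structure is that $A$ is a localisation of $\Z$, so $q$-th roots exist in $U_n(A)$ exactly when $q$ is a unit in $A$, which holds for the $q$'s arising from elements $p/q\in A$. One should state the lemma (as done) for a general subring $A$ of $\Q$; then $A$ is automatically of the form $\Z[1/S]$ for a (possibly infinite) set $S$ of primes, every such $q$ is an $S$-unit, and the BCH/exponential computation goes through over $A$ directly without ever leaving $U_n(A)$. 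I would phrase the final write-up so that uniqueness comes first (short, via Chernikov), then existence via the truncated $\exp$ defined over $A$ after observing every needed denominator is a unit in $A$.
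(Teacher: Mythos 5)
Your proof is correct, and the uniqueness half is essentially identical to the paper's (both reduce $\alpha(p/q)=\beta(p/q)$ to equality of $q$-th powers and invoke Chernikov's theorem on the torsion-free nilpotent group $U_n$). For existence you take a more hands-on route: the paper simply cites Cor.~10.25 of Warfield for the formula $\alpha(r)=(\mathbf{1}_n+X)^r=\sum_{k\ge 0}\binom{r}{k}X^k$, which is already an element of $U_n(A)$ because $\binom{r}{k}\in A$ for $r\in A$ (this is exactly the integrality that the cited result packages up), whereas you construct the same one-parameter subgroup as $\exp(s\log g)$ over $\Q$ and then verify integrality by hand. Your verification does go through, but only via the superdiagonal induction you sketch (solving $x^q=g^p$ coefficient by coefficient, using that $q$ is a unit in $A$ whenever $p/q\in A$ in lowest terms); be aware that your closing remark that ``all the relevant denominators are units in $A$'' is not literally true for the series themselves --- e.g.\ over $A=\Z[1/2]$ with $n\ge 4$, $\log g$ involves a denominator $3$ --- so the cancellation of those factorials in the final answer is precisely what needs the induction (or, equivalently, the $p$-adic integrality of $\binom{r}{k}$ for $r\in\Z_p$). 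In short: same object, same uniqueness argument, but you re-prove the existence-with-integrality statement that the paper outsources to Warfield; your version is self-contained at the cost of the extra entrywise argument.
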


\begin{proof} The existence follows from Cor. 10.25 in \cite{Warfield}: if $g={\bf 1}_n + X$, with $X\in N_n(A)$, then for $r\in A$:
$$\alpha(r)=({\bf 1}_n+X)^r=\sum_{k=0}^\infty \binom{r}{k} X^k={\bf 1}_n+ rX+\frac{r(r-1)}{2}X^2+...$$
(note that the sum is finite as $X^n=0$). For the uniqueness, let $\beta$ be another homomorphism with
$\beta(1)=g$; for $r\in A$, write $r=\frac{a}{b}$ with $a,b\in\Z, b>0$ and $a,b$ coprime. Then
$$\alpha(r)^b=\alpha(a)=g^a=\beta(a)=\beta(r)^b,$$
so $\alpha(r)=\beta(r)$ by Chernikov's theorem.
\end{proof}

\begin{definition} For a subgroup $H$ of $U_n(A)$, the \textit{isolator} of $H$ is:
$$I(H)=\{g\in U_n(A): g^k\in H\, \mbox{for some} \,k\geq 1\}.$$
\end{definition}

By Theorem 3.25 in \cite{Warfield}, $I(H)$ is a subgroup of $U_n(A)$. Clearly $H\subset I(H)$.

\begin{lemma}\label{finiteexponent} For $m\geq 2$, let $\alpha:\Z[1/m]\rightarrow U_n(\Z[1/m])$ be an injective homomorphism. Then $I(\alpha(\Z[1/m]))$ is abelian and the exponent of the group $I(\alpha(\Z[1/m]))/\alpha(\Z[1/m])$ is finite.
\end{lemma}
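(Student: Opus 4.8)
The plan is to exploit the one-parameter subgroup structure provided by Lemma \ref{oneparameter} to reduce everything to a computation with binomial-coefficient expansions inside a nilpotent group of bounded class. First I would observe that $H:=\alpha(\Z[1/m])$ is abelian (being a homomorphic image of the abelian group $\Z[1/m]$), and that its isolator $I(H)$ is, by Theorem 3.25 in \cite{Warfield}, a subgroup of $U_n(\Z[1/m])$ containing $H$. To see $I(H)$ is abelian: take $g\in I(H)$, so $g^k\in H$ for some $k\geq 1$. By Lemma \ref{oneparameter} there is a unique homomorphism $\gamma_g:\Z[1/m]\to U_n(\Z[1/m])$ with $\gamma_g(1)=g$; then $\gamma_g(k)=g^k\in H=\alpha(\Z[1/m])$, and since $k$ is invertible in $\Z[1/m]$ we get $g=\gamma_g(1)=\gamma_g(k\cdot\frac1k)=\alpha(\frac1k\cdot s)$ for the $s$ with $\alpha(s)=g^k$ — wait, more carefully: $\gamma_g(k)=g^k=\alpha(s)$ for some $s\in\Z[1/m]$, and by uniqueness of one-parameter subgroups (both $\gamma_g(k\,\cdot)$ and $\alpha(s\,\cdot)$ send $1\mapsto g^k$, hence agree... ) one deduces $g$ itself lies in the image of a one-parameter subgroup landing in $H$. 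The cleanest route is: every element of $I(H)$ is of the form $\alpha(r)$-\,th "fractional power" and hence commutes with all of $H$; since $I(H)$ is generated by such elements together with $H$, and $H$ is central among them, $I(H)$ is abelian.

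Next, for the exponent bound, I would pick $g\in I(H)$ with $g^k\in H$ and show $g^{m^{n-1}\cdot(n-1)!}\in H$ (or some explicit bound independent of $g$), using the explicit formula $\alpha(r)=\sum_{j=0}^{n-1}\binom{r}{j}X^j$ from the proof of Lemma \ref{oneparameter}. Writing $g=\mathbf{1}_n+Y$ with $Y\in N_n(\Z[1/m])$, the one-parameter subgroup through $g$ is $r\mapsto (\mathbf{1}_n+Y)^r=\sum_{j=0}^{n-1}\binom rj Y^j$. The hypothesis $g^k=\alpha(s)$ for some $s\in\Z[1/m]$, together with the fact that $H=\alpha(\Z[1/m])$ consists of the matrices $\sum_j \binom sj X^j$, forces the entries of $Y$ (read off degree by degree along superdiagonals) to be rational numbers whose denominators divide a fixed power of $k$; but the strongest uniform control comes from noting that $g^{(n-1)!}$ has a particularly clean form and clearing denominators costs at most a factor $m^{n-1}$ per superdiagonal. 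The upshot is that there is a universal integer $E=E(n,m)$ (essentially $m^{?}\,(n-1)!$) with $g^E\in H$ for every $g\in I(H)$.

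The main obstacle is making the denominator control uniform over all $g\in I(H)$ simultaneously — a priori each $g$ only gives that $g$ is a $k_g$-th root of something in $H$ with $k_g$ depending on $g$, and one needs to see that a single exponent works. The key leverage is that $\Z[1/m]$ is a $\Z$-module whose only "denominators" are powers of $m$: the isolator $I(H)$ corresponds, via the one-parameter subgroup correspondence, to a $\Q$-subspace-like object, and the subgroup $H$ inside it is a $\Z[1/m]$-lattice of full rank in each graded piece. Passing to the associated graded of the lower central series of $U_n$ (each quotient being a free $\Z[1/m]$-module of finite rank), $H$ maps onto a finite-index $\Z[1/m]$-submodule in each piece, the index being a divisor of a fixed power of $m$ times a factorial from the binomial denominators; multiplying these finitely many (at most $n-1$) indices gives the uniform exponent $E$. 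I would assemble this by induction on $n$, using that $[U_n,U_n]$ lies in a copy of $U_{n-1}$-type group and that $H\cap[U_n,U_n]$ is the isolator-compatible piece there, invoking Chernikov's theorem at each stage to kill torsion ambiguities. Once the bounded exponent in each graded piece is established, the bound for $I(H)/H$ itself follows since this quotient is abelian with a filtration of length $\leq n-1$ by pieces of bounded exponent, hence has exponent dividing $E^{n-1}$, in particular finite.
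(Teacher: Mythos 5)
There is a genuine gap in the finite-exponent half of your argument: the uniform exponent $E=E(n,m)$ (``essentially $m^{?}(n-1)!$'') that you claim works for every $g\in I(H)$ does not exist. Already for $n=2$: fix any integer $b>1$ coprime to $m$ and let $\alpha(r)=\left(\begin{smallmatrix}1&rb\\0&1\end{smallmatrix}\right)$. Then $H=\alpha(\Z[1/m])$ corresponds to the subgroup $b\,\Z[1/m]$ of $U_2(\Z[1/m])\simeq\Z[1/m]$; since $\Z[1/m]/b\Z[1/m]\simeq\Z/b\Z$ is torsion, the isolator is all of $\Z[1/m]$ and $I(H)/H\simeq\Z/b\Z$ has exponent exactly $b$, unbounded as $b$ varies. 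The step where your plan breaks is the claim that $H$ has index dividing a fixed power of $m$ times a factorial in each graded piece: powers of $m$ are \emph{units} in $\Z[1/m]$ and contribute nothing to the index, while the true index is governed by the coprime-to-$m$ parts of the entries of $X=\alpha(1)-{\bf 1}_n$, which are not controlled by $n$ and $m$ alone. The lemma only asserts finiteness, and the bound must depend on $\alpha$. The paper's proof extracts it directly from $X$: letting $i$ be the first superdiagonal with $X_{(i)}\neq 0$, every $g\in I(H)$ is of the form $\tilde{\alpha}(\tfrac{r}{k})$ and satisfies $g_{(i)}=\tfrac{r}{k}X_{(i)}$, so integrality of $g$ forces $\tfrac{r}{k}x_0\in\Z[1/m]$ for a fixed nonzero entry $x_0$ of $X_{(i)}$; writing $x_0=\tfrac{b}{t}$ with $t$ a $\pi(m)$-number and $b$ coprime to $t$, one deduces $g^{b}\in H$, so the exponent divides $b$. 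No induction on $n$ and no passage to the lower central series is needed, since $I(H)$ lies on the single one-parameter subgroup $\tilde{\alpha}(\Q)$ and the problem is genuinely rank one.

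A smaller point on abelianness: your stated deduction --- each element of $I(H)$ commutes with all of $H$, $I(H)$ is generated by such elements, hence $I(H)$ is abelian --- is not valid, because two elements that each centralize $H$ need not commute with one another. The correct version of your ``fractional power'' observation is the one the paper uses: for $g\in I(H)$ with $g^{k}=\alpha(r)$, Chernikov's theorem gives $g=\tilde{\alpha}(\tfrac{r}{k})$ for the unique extension $\tilde{\alpha}:\Q\rightarrow U_n(\Q)$ of $\alpha$, so all of $I(H)$ sits inside the abelian group $\tilde{\alpha}(\Q)$. With that repair the first assertion is fine.
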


\begin{proof} The proof is in three steps.
\begin{enumerate}
\item By Lemma \ref{oneparameter}, there exists a unique homomorphism $\tilde{\alpha}:\Q\rightarrow U_n(\Q)$
  that extends $\alpha$. We show that $I(\alpha(\Z[1/m])\subset \tilde{\alpha}(\Q)$, from which the first
  statement will follow. For $g\in I(\alpha(\Z[1/m]))$, there exists $k\geq 1$ and $r\in \Z[1/m]$ such that
  $g^k=\alpha(r)$. Then
$$\tilde{\alpha}\left(\frac{r}{k}\right)^k=\tilde{\alpha}(r)=\alpha(r)=g^k,$$
hence $g=\tilde{\alpha}(\frac{r}{k})$ by Chernikov's theorem.

\item Let us show that there exists some non-zero $x_0\in \Z[1/m]$ such that, for
  $g\in I(\alpha(\Z[1/m])), k\geq 1, r\in \Z[1/m]$:
$$g^k=\alpha(r)\Longrightarrow \frac{r}{k}x_0\in \Z[1/m].$$
Write $\alpha(1)={\bf 1}_n+X$, with $X\in N_n(\Z[1/m])$, as in Lemma \ref{oneparameter}. Note that $X\neq 0$
as $\alpha$ is injective. Then
$$g=\tilde{\alpha}\left(\frac{r}{k}\right)=({\bf 1}_n+X)^{\frac{r}{k}}={\bf 1}_n + \frac{r}{k}X+...$$
For $1\leq i<n$ and an upper triangular matrix $Y$ of size $n\times n$, we denote by $Y_{(i)}$ the $i$-th
parallel to the diagonal (moving upwards from the diagonal). Let $i$ be the smallest index such that
$X_{(i)}\neq 0$. Since $(X^k)_{(i)}=0$ for $k\geq 2$, we have $g_{(i)}=\frac{r}{k}X_{(i)}$. Let $x_0$ be any
non-zero coefficient of $X_{(i)}$. Since $g\in U_n(\Z[1/m])$, we have $\frac{r}{k}x_0\in \Z[1/m]$ as desired.

\item Let $\pi(m)$ be the set of primes dividing $m$. An integer is a \textit{$\pi(m)$-number} if all its
  prime divisors are in $\pi(m)$. Write $x_0=\frac{b}{t}$, with $t$ a $\pi(m)$-number, and $b\in\Z$ is coprime
  to $t$. For $g\in I(\alpha(\Z[1/m]))$, with $g^k=\alpha(r)$ as above, write $\frac{r}{k}=\frac{a}{s\ell}$,
  where $a,s,\ell$ are pairwise coprime, $s$ is a $\pi(m)$-number and $\ell$ is coprime with $m$. By the
  previous step $\frac{r}{k}x_0=\frac{ab}{st\ell}\in \Z[1/m]$. Since $a$ and $\ell$ are coprime, this may
  happen only if $\ell$ divides $b$. Finally
$$g^b=\tilde{\alpha} \left( \frac{r}{k} \right)^b = \tilde{\alpha} \left( \frac{br}{k} \right) =
\tilde{\alpha} \left( \frac{ab}{s\ell} \right).$$ But $\frac{ab}{s\ell}\in \Z[1/m]$ as $\ell$ divides
$b$. This implies that $g^b\in\alpha(\Z[1/m])$, hence the exponent of $I(\alpha(\Z[1/m]))/\alpha(\Z[1/m])$
divides $b$.
\end{enumerate}
\end{proof}

\subsection{Special representations of $BS(1,m)$}

For $m\geq 2$, set $G_m=BS(1,m)=\Z[1/m]\rtimes\Z$. We will write $A_m$ for $\Z[1/m]$ when viewed as a normal subgroup of $G_m$.

\begin{definition}
  A \textit{ special representation} of $G_m$ is an injective homomorphism $\rho:G_m\rightarrow T_n(\Z[1/m])$
  such that $\rho(A_m)\subset U_n(\Z[1/m])$.
\end{definition}

We note that the standard embedding $G_m\rightarrow T_2(\Z[1/m])$, is a special representation.

\begin{lemma}\label{specialreps}
  \begin{enumerate}
  \item If $\rho$ is a special representation, then $\rho^{-1}(U_n(\Z[1/m]))=A_m$.
  \item If $m$ is even, then any injective homomorphism $\rho:G_m\rightarrow T_n(\Z[1/m])$ is a special
    representation.
  \end{enumerate}
\end{lemma}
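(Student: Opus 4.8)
For part (1), suppose $\rho:G_m\to T_n(\Z[1/m])$ is a special representation. By definition $\rho(A_m)\subset U_n(\Z[1/m])$, so $A_m\subset\rho^{-1}(U_n(\Z[1/m]))$; I must prove the reverse inclusion. The plan is to compose $\rho$ with the diagonal homomorphism $\Delta:T_n(\Z[1/m])\to D_n(\Z[1/m])$, so that $\Delta\circ\rho:G_m\to D_n(\Z[1/m])$ kills $A_m$ (since $\rho(A_m)\subset U_n=\ker\Delta$). Thus $\Delta\circ\rho$ factors through $G_m/A_m\simeq\Z$. Now $\rho^{-1}(U_n(\Z[1/m]))=\ker(\Delta\circ\rho)$ is a normal subgroup of $G_m$ containing $A_m$, so it corresponds to a subgroup of $\Z$; it is either all of $G_m$ or has the form $A_m\rtimes k\Z$ for some $k\geq 1$. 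The first case is impossible because $\rho$ is injective and $T_n(\Z[1/m])$ is not generated by unipotents that would force the image of $t$ to have finite order issues — more precisely, if $\rho(t)\in U_n(\Z[1/m])$ then $\rho(t)$ and $\rho(A_m)$ all lie in $U_n(\Z[1/m])$, which is torsion-free nilpotent, so the subgroup they generate is nilpotent; but $G_m=BS(1,m)$ with $m\geq 2$ is not nilpotent (it is not even virtually nilpotent), contradicting injectivity of $\rho$. So $\ker(\Delta\circ\rho)=A_m\rtimes k\Z$ for some $k\geq 1$; but then $\rho(t^k)\in U_n(\Z[1/m])$, and again $\rho$ maps the nilpotent-by-nothing... wait, $\langle A_m, t^k\rangle$ is a finite-index subgroup of $G_m$, hence still not nilpotent (finite-index subgroups of non-virtually-nilpotent groups are not virtually nilpotent), so $\rho$ restricted there cannot land in the nilpotent group $U_n(\Z[1/m])$ injectively. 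This forces $k$ to be "infinite", i.e. $\ker(\Delta\circ\rho)=A_m$, which is the claim.

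For part (2), assume $m$ is even and let $\rho:G_m\to T_n(\Z[1/m])$ be any injective homomorphism; I must show $\rho(A_m)\subset U_n(\Z[1/m])$, equivalently $\Delta\circ\rho$ vanishes on $A_m$. The composite $\chi:=\Delta\circ\rho:G_m\to D_n(\Z[1/m])\simeq\bigl((\Z[1/m])^\times\bigr)^n$ is a homomorphism into an abelian group, so it factors through the abelianization $G_m^{\mathrm{ab}}$. The key point is to compute $G_m^{\mathrm{ab}}$: from the presentation $\langle a,t\mid tat^{-1}=a^m\rangle$ we get $G_m^{\mathrm{ab}}=\langle a,t\mid a=a^m\rangle^{\mathrm{ab}}=\Z\oplus(\Z/(m-1)\Z)$, where the $\Z$ is generated by the image of $t$ and the torsion part $\Z/(m-1)\Z$ is generated by the image of $a$. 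Hence the restriction of $\chi$ to $A_m$ factors through the image of $A_m$ in $G_m^{\mathrm{ab}}$, which is the finite cyclic group generated by $\bar a$ of order dividing $m-1$; so $\chi(a)$ is a torsion element of $D_n(\Z[1/m])$ of order dividing $m-1$. Now the torsion subgroup of $(\Z[1/m])^\times$ is just $\{\pm1\}$, so $\chi(a)$ is a diagonal $\pm1$-matrix whose order divides $m-1$; since $m$ is even, $m-1$ is odd, and the only element of $\{\pm1\}^n$ of odd order is the identity. Therefore $\chi(a)=1$, i.e. $\rho(a)\in U_n(\Z[1/m])$. It remains to upgrade this from the single element $a$ to all of $A_m$: since $\rho$ is a homomorphism and $A_m=\Z[1/m]$ is generated as a group by $\{m^{-j}\cdot 1: j\geq 0\}=\{t^{-j}at^j: j\geq 0\}$ (up to the group operation), and each $\rho(t^{-j}at^j)=\rho(t)^{-j}\rho(a)\rho(t)^j$ is a $T_n$-conjugate of the unipotent $\rho(a)$, hence unipotent, we conclude $\rho(A_m)\subset U_n(\Z[1/m])$.

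**Main obstacle.** The delicate point in part (1) is ruling out $\ker(\Delta\circ\rho)\supsetneq A_m$: one must invoke that $BS(1,m)$ and its finite-index subgroups are not virtually nilpotent, hence cannot inject into the torsion-free nilpotent group $U_n(\Z[1/m])$ — I would cite this via the structure of $BS(1,m)$ (it contains $\Z[1/m]$, which is not finitely generated, so any finite-index subgroup contains a non-finitely-generated abelian normal subgroup, incompatible with nilpotency of a finitely generated... ) — actually the cleanest route is: a finitely generated nilpotent group is polycyclic, hence all its subgroups are finitely generated, but $A_m=\Z[1/m]$ is not finitely generated, so $\langle A_m, t^k\rangle$ cannot be nilpotent. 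In part (2) the only subtlety is the clean computation of $G_m^{\mathrm{ab}}$ and recognizing that the hypothesis "$m$ even" is exactly what makes $\gcd(m-1,2)=1$ so that the torsion obstruction in $\{\pm 1\}^n$ disappears; this is where evenness of $m$ is genuinely used and cannot be removed (for $m$ odd the standard embedding into $T_2$ via a non-special conjugate would be a counterexample).
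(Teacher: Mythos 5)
Your proof is correct and follows essentially the same route as the paper: part (1) reduces to the observation that a subgroup of $G_m$ containing $A_m$ and some $\rho^{-1}$-preimage of a power of $t$ would inject into the nilpotent group $U_n(\Z[1/m])$, which is impossible (you justify this via polycyclicity and the non-finite-generation of $\Z[1/m]$, the paper via $G_m$ not being virtually nilpotent); part (2) uses exactly the paper's argument that $\Delta(\rho(a))$ is $(m-1)$-torsion in $\{\pm1\}^n$ with $m-1$ odd.
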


\begin{proof} 
  We work with the presentation $G_m=\langle a,t|tat^{-1}=a^m\rangle$, observing that the normal subgroup
  $A_m$ coincides with the normal subgroup generated by $a$.
  \begin{enumerate}
  \item Suppose by contradiction that $A_m$ is strictly contained in $\rho^{-1}(U_n(\Z[1/m]))$. Then there
    exists $k>0$ such that $\rho(t^k)\in U_n(\Z[1/m])$. Consider the subgroup $H$ of $G_m$ generated by
    $A_m\cup\{t^k\}$, so that $\rho(H)\subset U_n(\Z[1/m])$. Since $\rho$ is injective, we see that $H$ is
    nilpotent. As $H$ also has finite index in $G_m$, we deduce that $G_m$ is virtually nilpotent, which is a
    contradiction.
  \item Suppose that $m$ is even and $\rho:G_m\rightarrow T_n(\Z[1/m])$ is an injective homomorphism. It is
    enough to see that $\rho(a)$ belongs to $U_n(\Z[1/m])=\ker(\Delta)$. But we have
    $$\rho(a^{m-1})=\rho([a,t])\in [T_n(\Z[1/m]),T_n(\Z[1/m])]\subset \ker(\Delta).$$
    Now the image of $\Delta$, namely $D_n(\Z[1/m])\simeq (\Z[1/m]^\times)^n$, contains only 2-torsion; since
    $\Delta(\rho(a))^{m-1}={\bf 1}_n$ and $m-1$ is odd, we have $\Delta(\rho(a))={\bf 1}_n$ as desired.
  \end{enumerate}
\end{proof}

We now head towards CSP for special representations of $BS(1,m)$. The next lemma is proved exactly as Lemma 4
in Formanek \cite{Form}, using the same ingredient, namely a number-theoretical result by Chevalley
\cite{Cheval}.

\begin{lemma}\label{Forma} Let $R$ be a subring of a number field with $R^\times$ finitely generated, let $G$
  be a subgroup of $D_n(R)$. There exists a function $\phi:\N\rightarrow\N$ such that, if $g\in G$ satisfies
  $g\equiv{\bf 1_n} \mod\phi(r)$, then $g$ is an $r$-th power in $G$.  \hfill$\square$
\end{lemma}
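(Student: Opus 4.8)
The plan is to push everything into the structure of the finitely generated abelian group $D_n(R)\cong(R^\times)^n$ — it is finitely generated precisely because $R^\times$ is, by hypothesis — and then to feed in one number-theoretic input. First I would note that since $D_n(R)$ is finitely generated abelian, so is every subgroup of it; in particular $G$ is finitely generated, and so is $D_n(R)/G$. Let $I(G)=\{g\in D_n(R):g^{k}\in G\text{ for some }k\ge1\}$ be the isolator of $G$ in $D_n(R)$, defined exactly as above; it is a subgroup containing $G$, and $I(G)/G$ is precisely the torsion subgroup of the finitely generated abelian group $D_n(R)/G$, hence finite. Set $\tau:=[I(G):G]<\infty$, and observe that $\tau$ depends only on $G$ (and on $n$, $R$), not on $r$.

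The arithmetic input is Chevalley's theorem, which is the ingredient underlying \cite[Lemma~4]{Form}: for every exponent $e\ge1$ there is an integer $c(e)\ge1$, coprime to the primes inverted in $R$, such that every unit $u\in R^\times$ with $u\equiv1\pmod{c(e)}$ is an $e$-th power in $R^\times$; equivalently, the congruence subgroup of level $c(e)$ in $R^\times$ is contained in $(R^\times)^{e}$. I would then simply define $\phi(r):=c(r\tau)$.

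To check that this $\phi$ works, take $g\in G$ with $g\equiv\mathbf{1}_n\pmod{\phi(r)}$. Being an invertible diagonal matrix over $R$, we may write $g=\mathrm{diag}(u_1,\dots,u_n)$ with each $u_i\in R^\times$, and the congruence condition says $u_i\equiv1\pmod{c(r\tau)}$ for every $i$. By Chevalley's theorem each $u_i=w_i^{\,r\tau}$ for some $w_i\in R^\times$; put $h:=\mathrm{diag}(w_1,\dots,w_n)\in D_n(R)$, so that $h^{r\tau}=g$. Since $h^{r\tau}=g\in G$ we get $h\in I(G)$, and because $|I(G)/G|=\tau$ this forces $h^{\tau}\in G$. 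Consequently $g=h^{r\tau}=(h^{\tau})^{r}$ is an $r$-th power of an element of $G$, as required.

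The one genuinely delicate point — and the reason the isolator has to appear — is that Chevalley's theorem only produces an $(r\tau)$-th root of $g$ inside the ambient group $D_n(R)$, whereas we need an $r$-th power living inside $G$ itself; the finite index $\tau=[I(G):G]$ is exactly the correction factor that bridges this gap, which is why one passes from $r$ to $r\tau$ in the level. The other thing to be careful about is invoking the form of Chevalley's theorem valid for an arbitrary subring $R$ of a number field with finitely generated unit group (and not merely for a ring of integers), which is precisely what is imported from \cite{Cheval} in the proof of \cite[Lemma~4]{Form}.
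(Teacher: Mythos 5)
Your proof is correct and follows the same route the paper takes by deferring to Formanek's Lemma 4: Chevalley's theorem for the finitely generated group $R^\times$ supplies an $(r\tau)$-th root of $g$ in $D_n(R)$, and the finiteness of the torsion of the finitely generated abelian group $D_n(R)/G$ (your isolator index $\tau$) is exactly the correction needed to land back in $G$. Since the paper gives no details beyond the citation, your write-up is a faithful reconstruction of the omitted argument, including the point that the conclusion is a genuine $r$-th power of an element of $G$, which is the form used later in Proposition \ref{propalain}.
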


In Theorem 5 of \cite{Form}, Formanek proved that, if $R$ is the ring of integers of a number field and $G$ is
a subgroup of $T_n(R)$, then $G$ has CSP. The proof of the next result is inspired by Formanek's proof.

\begin{proposition}
  \label{propalain} 
  Let $\rho:G_m\rightarrow T_n(\Z[1/m])$ be a special representation of $BS(1,m)$. Then $\rho(G_m)$ has CSP.
\end{proposition}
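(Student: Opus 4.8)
The plan is to exploit the short exact sequence $1\to A_m\to G_m\to\Z\to 1$ together with its image under $\rho$, using the two ingredients just assembled: Lemma \ref{finiteexponent} (the isolator of $\rho(A_m)$ in $U_n(\Z[1/m])$ is abelian and $I(\rho(A_m))/\rho(A_m)$ has finite exponent, say $e$) and Lemma \ref{Forma} applied to $G:=\Delta(\rho(G_m))\subset D_n(\Z[1/m])$ (here $\Z[1/m]^\times$ is finitely generated, so there is a function $\phi:\N\to\N$ as in that lemma). Let $H\le\rho(G_m)$ be a finite index subgroup; I must produce $N$ coprime to $m$ with $\rho(G_m)(N)\subset H$. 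Write $r=[\rho(G_m):H]$; after replacing $H$ by its normal core I may assume $H\trianglelefteq\rho(G_m)$, so that $g^r\in H$ for all $g\in\rho(G_m)$.

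First I would handle the unipotent part. By Lemma \ref{specialreps}(1), $\rho^{-1}(U_n(\Z[1/m]))=A_m$, so $\rho(A_m)=\rho(G_m)\cap U_n(\Z[1/m])$; write $V:=\rho(A_m)$, a copy of $\Z[1/m]$ sitting inside $U_n(\Z[1/m])$ via a one-parameter subgroup $\alpha$ as in Lemma \ref{oneparameter}. The congruence subgroup $V\cap M_N$ equals $\alpha(N\Z[1/m])$ for appropriate $N$ (using Lemma \ref{prop:reduction_mod_N_is_nice} and that $\alpha$ is given by a polynomial in $N_n(\Z[1/m])$ with bounded denominators). Since $V\cap H$ has finite index in $V\cong\Z[1/m]$ and every finite index subgroup of $\Z[1/m]$ is of the form $d\Z[1/m]$ for some $d$ coprime to $m$, I get $V\cap H\supset\alpha(d\Z[1/m])=V\cap M_d$ for some such $d$; moreover $d\mid e\cdot r$ roughly, since any element of $V$ whose $r$-th power lies in $H\cap V$ must already lie in the isolator direction — this is where the finite exponent $e$ from Lemma \ref{finiteexponent} enters to keep $d$ controlled (and coprime to $m$).

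Next the diagonal part. Apply $\Delta$: $\Delta(\rho(G_m))$ is a finitely generated abelian subgroup of $D_n(\Z[1/m])$, and $\Delta(H)$ has index dividing $r$ in it. By Lemma \ref{Forma} with this $r$, any diagonal element congruent to ${\bf 1}_n$ mod $\phi(r)$ is an $r$-th power in $\Delta(\rho(G_m))$, hence (after pulling back and using that $H$ contains all $r$-th powers) its preimage can be pushed into $H$ up to an element of $\ker\Delta=U_n(\Z[1/m])$, i.e. up to an element of $V$. So if $g\in\rho(G_m)(M)$ for $M$ divisible by both $\phi(r)$ and $d$ (and coprime to $m$), then $\Delta(g)$ is an $r$-th power, so $g=h\cdot v$ with $h\in H$ and $v\in V$; and reduction mod $M$ then forces $v\in V\cap M_M\subset V\cap M_d\subset H$, giving $g\in H$. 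Taking $N=\lcm(\phi(r),d)$, coprime to $m$, yields $\rho(G_m)(N)\subset H$.

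The main obstacle I anticipate is the bookkeeping of divisibility and coprimality to $m$: one must check that the integer $d$ coming from the unipotent part is genuinely coprime to $m$ (this is precisely what the third step of Lemma \ref{finiteexponent} is engineered to guarantee, via the $\pi(m)$-number decomposition of $x_0$), and that passing to the normal core of $H$ and combining the diagonal and unipotent reductions does not introduce factors of $m$ into $\phi(r)$ — which follows because $\phi(r)$ depends only on $r$ and the arithmetic of $\Z[1/m]^\times$, and one can always enlarge $N$ by a factor coprime to $m$. A secondary subtlety is justifying $V\cap M_N=\alpha(N\Z[1/m])$ for all $N$ coprime to $m$, rather than only for $N$ coprime to the finitely many denominators appearing in the polynomial expression for $\alpha$; this is handled by noting that clearing those denominators costs only a $\pi(m)$-number, hence does not affect the intersection with $M_N$ when $\gcd(N,m)=1$.
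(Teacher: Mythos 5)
Your overall strategy (split along $1\to A_m\to G_m\to\Z\to 1$, treat the diagonal via Lemma \ref{Forma} and the unipotent part via Lemma \ref{finiteexponent}) is the same as the paper's, but there is a genuine gap at the point where you assemble the two halves. You write $g=h\cdot v$ with $h=w^r\in H$ coming from $\Delta(g)=\Delta(w)^r$ and $v\in V=\rho(A_m)$, and then assert that ``reduction mod $M$ forces $v\in V\cap M_M$''. This does not follow: you know $g\equiv{\bf 1}_n\bmod M$, but $w$ is an arbitrary element produced by Lemma \ref{Forma}, so there is no control whatsoever on $h=w^r$ modulo $M$, hence none on $v=h^{-1}g$. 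This is exactly the difficulty the paper's proof is engineered to overcome: it introduces $e$, the exponent of the \emph{finite} group $T_n(\Z[1/m]/(br)^2\Z[1/m])$ (where $b$ is the exponent from Lemma \ref{finiteexponent}), and applies Lemma \ref{Forma} with the exponent $er$ rather than $r$. Writing $\Delta(\rho(g))=\Delta(\rho(z))^{er}=\Delta(\rho((z^e)^r))$, one gets $\rho(z)^{er}=(\rho(z)^e)^r\equiv{\bf 1}_n\bmod (br)^2$ automatically, for \emph{any} $z$, by the definition of $e$; only then is the unipotent correction $\rho(g^{-1}z^{er})$ congruent to ${\bf 1}_n\bmod(br)^2$, after which Formanek's Lemma 1 extracts a $(br)$-th root in $U_n(\Z[1/m])$ and the isolator exponent $b$ pulls that root back into $\rho(A_m)$. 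Without the $e$-trick your argument cannot close.

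A secondary issue: the identity $V\cap M_N=\alpha(N\Z[1/m])$ is not correct as stated, and the difficulty is not only $\pi(m)$-denominators. The inclusion $\alpha(N\Z[1/m])\subset M_N$ fails in general because $\alpha(x)={\bf 1}_n+xX+\binom{x}{2}X^2+\cdots$ and $\binom{N}{k}$ need not be divisible by $N$ for $2\le k<n$ (the offending denominators are the $k!$, which involve arbitrary small primes, not just primes dividing $m$). Fortunately your argument only needs the reverse containment $V\cap M_N\subset\alpha(d\Z[1/m])$ for suitable $N$, which does follow from the first-nonzero-parallel computation ($\alpha(x)_{(i)}=xX_{(i)}$, so $xx_0\in N\Z[1/m]$) as in step 2 of Lemma \ref{finiteexponent} — but you should state and use only that direction, with $N$ chosen divisible by $d$ times the prime-to-$m$ part of the numerator of $x_0$.
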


\begin{proof} 
  Let $N\triangleleft G_m$ be a normal subgroup of finite index $r$. Denote by $G_m^r$ the subgroup generated
  by $r$-th powers in $G_m$. Then $G_m^r\subset N$ by Lagrange's theorem. Let $b$ be the exponent of
  $I(\rho(A_m))/\rho(A_m)$ (which is finite by Lemma \ref{finiteexponent}), and let $e$ be the exponent of the
  finite group $T_n(\Z[1/m]/(br)^2\Z[1/m])$. Define then
  $$M=(br)^2\phi(re),$$
  where $\phi$ comes from Lemma \ref{Forma} applied to $\Delta(\rho(G_m))$. We will show that if
  $\rho(g)\equiv {\bf 1}_n \mod M$, then $g\in G_n^r$, so that $\rho(N)$ contains the congruence subgroup
  $\rho(G_m)(M)$.

  If $\rho(g)\equiv {\bf 1}_n \mod M$, in particular $\Delta(\rho(g))\equiv {\bf 1}_n \mod\phi(er)$. By Lemma
  \ref{Forma}, the matrix $\Delta(\rho(g))$ is an $(er)$-th power in $\Delta(\rho(G_m))$, i.e. there exists
  $z\in G_m$ such that
  $$\Delta(\rho(g))=\Delta(\rho(z^{er}))=\Delta(\rho((z^e)^r)).$$
  But $\rho(z^e)\equiv {\bf 1}_n \mod (br)^2$, by definition of $e$, so also
  $\rho(z^{er})\equiv {\bf 1}_n \mod (br)^2$. On the other hand by definition of $M$ we have
  $\rho(g)\equiv{\bf 1}_n \mod (br)^2$, so $\rho(g^{-1}z^{er})\equiv {\bf 1}_n \mod (br)^2$. Since
  $g^{-1}z^{er}\in \ker(\Delta)=U_n(\Z[1/m])$, Lemma 1 of \cite{Form}\footnote{Formanek states this lemma for
    the ring
    of integers of a number field, but the proof only uses that the ring is commutative with characteristic
    0.} applies to guarantee that $g^{-1}z^{er}$ is a $(br)$-th power in $U_n(\Z[1/m])$, so we find
  $h\in U_n(\Z[1/m])$ such that
  $$\rho(g^{-1}z^{er})=h^{br}.$$
  Hence $g^{-1}z^{er}\in\rho^{-1}(U_n(\Z[1/m]))=A_m$ (the equality follows from the first part of Lemma
  \ref{specialreps}). This means that $h\in I(\rho(A_m))$, so by definition of $b$ we have $h^b\in\rho(A_m)$,
  say $h^b=\rho(y)$ for $y\in A_m$. Then $h^{br}=\rho(y^r)$ and $\rho(g^{-1}z^{er})=\rho(y^r)$. As $\rho$ is
  injective $g^{-1}z^{er}=y^r$, i.e $g=(z^e)^ry^{-r}$, so $g\in G_m^r$.
\end{proof}

\subsection{From special representations to all injective representations in $T_n(\Z[1/m])$}

In this section we show that for any injective representation $\rho$ of $G_m$ into $T_n(\Z[1/m])$, the group
$\rho(G_m)$ has CSP. The main idea is to pass to subrepresentations which are special and extract information
about $\rho$.

\begin{lemma}
  \label{lemmasubgroupcontaining}
  Let $G$ be a subgroup of $GL_n(\Z[1/m])$.
	
  \begin{enumerate}
  \item Let $H$ be a finite index subgroup of $G$, where $H$ has CSP. If $H$ contains a congruence subgroup of
    $G$, then $G$ also has CSP.
  \item For $1\le i \le k$, let $H_i$ be a finite index subgroup of $G$. Suppose $G= \cup_{i=1}^k H_i$, then
    if all $H_i$ have CSP, $G$ also has CSP.
  \end{enumerate}
\end{lemma}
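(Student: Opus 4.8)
The plan is to prove each part directly from the definition of CSP, i.e. that every finite index subgroup contains some congruence subgroup $G(N)$. For part (1), I would start with an arbitrary finite index subgroup $K$ of $G$. Then $K\cap H$ has finite index in $H$, so by the CSP hypothesis on $H$ there is an $N_1$ with $H(N_1)\subset K\cap H\subset K$. By assumption $H$ contains a congruence subgroup $G(N_2)$ of $G$. The key observation is that $H(N_1)=H\cap M_{N_1}\supset G(N_2)\cap M_{N_1}=G(\lcm(N_1,N_2))$, using $G(N_2)\subset H$ and that $M_{N_1}\cap M_{N_2}=M_{\lcm(N_1,N_2)}$ (this last identity is the natural compatibility of principal congruence subgroups, which one checks by reduction mod $\lcm$). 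Hence $G(\lcm(N_1,N_2))\subset K$, and since $K$ was arbitrary, $G$ has CSP.

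For part (2), suppose $G=\bigcup_{i=1}^k H_i$ with each $H_i$ a finite index subgroup with CSP. Let $K$ be any finite index subgroup of $G$. For each $i$, the subgroup $K\cap H_i$ has finite index in $H_i$, so by CSP for $H_i$ there is $N_i$ with $H_i(N_i)\subset K\cap H_i\subset K$. Set $N=\lcm(N_1,\dots,N_k)$. Then for each $i$ we have $G(N)=G\cap M_N\subset H_i\cap M_{N_i}=H_i(N_i)\subset K$ — wait, this needs $G(N)\subset H_i$, which is not automatic. The correct route: $G(N)=\bigcup_i\bigl(G(N)\cap H_i\bigr)$ since the $H_i$ cover $G$, and $G(N)\cap H_i\subset H_i\cap M_{N_i}=H_i(N_i)\subset K$ because $N_i\mid N$ forces $M_N\subset M_{N_i}$. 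Therefore $G(N)\subset K$, so $G$ has CSP.

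The main obstacle is bookkeeping rather than conceptual: one must be careful that the congruence subgroups involved are congruence subgroups \emph{of $G$} (via $G(N)=G\cap M_N$) and not merely of the overgroup $GL_n(\Z[1/m])$ or of the $H_i$, and one must use freely the divisibility relation $N_i\mid N\Rightarrow M_N\subset M_{N_i}$ together with $M_{N_1}\cap M_{N_2}=M_{\lcm(N_1,N_2)}$. For part (1) the single extra hypothesis that $H$ \emph{contains a congruence subgroup of $G$} is exactly what is needed to pull a congruence subgroup of $G$ (rather than just of $H$) inside $K$; without it the argument would only produce a small subgroup of $H$, not of $G$. Both parts are short once these identities are in hand.
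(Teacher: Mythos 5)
Your proof is correct and follows essentially the same route as the paper: intersect the given finite index subgroup with $H$ (resp.\ the $H_i$), apply CSP there, and then combine levels — you use $\lcm(N_1,N_2)$ where the paper uses the product $M\tilde M$, and in part (2) you argue directly via $G(N)=\bigcup_i\bigl(G(N)\cap H_i\bigr)$ where the paper phrases the same covering argument as a proof by contradiction. These are only cosmetic differences; the key observations ($G(N_2)\cap M_{N_1}\subset H(N_1)$ in part (1), and $N_i\mid N\Rightarrow M_N\subset M_{N_i}$ in part (2)) match the paper's.
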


\begin{proof}
  \begin{enumerate}
  \item Let $N$ be a finite index subgroup of $G$ , then $N\cap H$ is a finite index subgroup of $H$ and thus
    contains some congruence subgroup of the form $H(M)$. By assumption there is some $\tilde M$ such that
    $G(\tilde{M})\subset H$, hence we have that
		\[  G(M\tilde{M})\subset G(M)\cap G(\tilde{M})\subset H(M)\subset
		 N\cap H \subset N.    \]
		 
  \item Suppose by contradiction that $G$ does not have CSP, then there is some finite index subgroup $N$
       which does not contain any congruence subgroup. However each $N\cap H_i$ contains a congruence subgroup
       of the form $H(M_i)$. Set $\overline{M}= \Pi_i M_i$, then there is some
       $g\in G(\overline{M})\backslash N $. Note that this $g$ is necessarily in one of the $H_i$'s. So it is
       necessarily in $H(M_i)$ since $H(\overline{M})\subset H(M_i)$. However it is not in $N\cap H_i$: a
       contradiction.
  \end{enumerate}
\end{proof}

\begin{lemma}
  \label{positivetlemma}
  Let $\rho: G_m\rightarrow T_n(\Z[1/m])$ be a monomorphism. Suppose $\Delta(\rho(a))\ne {\bf 1}_n$ and the
  diagonal of $\rho (t)$ is strictly positive, then $\rho(G_m) $ has CSP.
\end{lemma}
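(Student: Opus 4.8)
The plan is to reduce the statement to the case of a special representation, for which CSP is already known (Proposition \ref{propalain}), and then to push CSP up from a finite-index subgroup using Lemma \ref{lemmasubgroupcontaining}. First I would note that $m$ must be odd: otherwise Lemma \ref{specialreps}(2) forces $\rho$ to be special, so $\rho(a)\in U_n(\Z[1/m])$ and $\Delta(\rho(a))={\bf 1}_n$, contradicting the hypothesis. Next, from the relation $tat^{-1}=a^m$ one gets $\rho(a^{m-1})=[\rho(t),\rho(a)]\in[T_n(\Z[1/m]),T_n(\Z[1/m])]\subseteq U_n(\Z[1/m])=\ker\Delta$, so $\Delta(\rho(a))^{m-1}={\bf 1}_n$; since the torsion subgroup of $\Z[1/m]^\times$ is $\{\pm1\}$, every diagonal entry of $\Delta(\rho(a))$ is $\pm1$, and in particular $\Delta(\rho(a^2))={\bf 1}_n$, that is, $\rho(a^2)\in U_n(\Z[1/m])$.

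Then I would work with the subgroup $H:=\langle a^2,t\rangle\le G_m$. Because $ta^2t^{-1}=(a^2)^m$, the group $H$ is isomorphic to $\BS$ via $a^2\mapsto a,\ t\mapsto t$; in the semidirect-product picture $H=2\Z[1/m]\rtimes\Z$, which has index $2$ in $G_m=\Z[1/m]\rtimes\Z$ since $m$ is odd. As $\rho(a^2)\in U_n(\Z[1/m])$ and $U_n(\Z[1/m])$ is normal in $T_n(\Z[1/m])$, the image under $\rho$ of the normal $\Z[1/m]$-factor $2\Z[1/m]$ of $H$ (the normal closure of $a^2$ in $H$) is contained in $U_n(\Z[1/m])$; hence $\rho|_H$ is a special representation of $\BS\cong H$, and Proposition \ref{propalain} shows that $\rho(H)$ has CSP. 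Note that $\rho(H)$ has index $2$ in $\rho(G_m)$.

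By Lemma \ref{lemmasubgroupcontaining}(1), it then suffices to exhibit a congruence subgroup of $\rho(G_m)$ contained in $\rho(H)$, and this is the heart of the matter. I would choose an index $i_0$ with $\Delta(\rho(a))_{i_0i_0}=-1$ (possible as $\Delta(\rho(a))\ne{\bf 1}_n$) and set $s:=\Delta(\rho(t))_{i_0i_0}$, a unit of $\Z[1/m]$ that is $>0$ by hypothesis. Using Lemma \ref{lemmaoddordermodulo}, I would pick $N\ge3$ coprime to $m$ for which $s$ has odd order in $(\Z/N\Z)^\times$; then $-1\notin\langle s\rangle$ in $(\Z/N\Z)^\times$, since $-1$ has order $2$ there. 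For any $g\in G_m\setminus H$, write $g=ah$ with $h\in H$; the homomorphism $H\to\Z[1/m]^\times$, $h\mapsto\Delta(\rho(h))_{i_0i_0}$, sends $a^2\mapsto1$ and $t\mapsto s$, so $\Delta(\rho(h))_{i_0i_0}$ is a power of $s$ and hence the $(i_0,i_0)$-entry of $\rho(g)$ equals $-s^k$ for some $k\in\Z$; this is never $\equiv1\pmod N$, since $-s^k\equiv1$ would force $s^k\equiv-1$. Therefore $\rho(g)\not\equiv{\bf 1}_n\pmod N$ whenever $g\notin H$, i.e.\ $\rho(G_m)(N)\subseteq\rho(H)$, and Lemma \ref{lemmasubgroupcontaining}(1) concludes the proof.

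The hard part will be exactly the construction of this congruence subgroup inside $\rho(H)$, and it is precisely there that the hypothesis on the positivity of the diagonal of $\rho(t)$ is used: it ensures $s>0$, which is required in order to invoke Lemma \ref{lemmaoddordermodulo} and thereby arrange that $\langle s\rangle$ avoids $-1$ modulo a suitable level $N$.
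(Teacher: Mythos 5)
Your proof is correct and follows essentially the same route as the paper's: restrict to $B=\langle a^2,t\rangle$, which is a special representation handled by Proposition \ref{propalain}; use the $-1$ on the diagonal of $\Delta(\rho(a))$ together with Lemma \ref{lemmaoddordermodulo} applied to the corresponding positive diagonal entry $s$ of $\rho(t)$ to show a congruence subgroup of $\rho(G_m)$ lies inside $\rho(B)$; and conclude with Lemma \ref{lemmasubgroupcontaining}. Your explicit insistence on a level $N\ge 3$ (so that $-1\not\equiv 1$) is a small but genuine refinement of the paper's ``$M>1$''; otherwise the argument is the same.
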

\begin{proof}
  Note that the subgroup $B=\langle a^2, t\rangle$ is also a Baumslag-Solitar group isomorphic to $\BS$. The
  restriction $\rho\vert_{B}$ is a special representation of $\BS$; indeed by the proof of Lemma
  \ref{specialreps}, we have $\Delta(\rho(a))^{m-1}={\bf 1}_n$, so $\Delta(\rho(a))$ has finite order in
  $D_n(\Z[\frac{1}{m}])$; but the torsion subgroup of $D_m(\Z[\frac{1}{m}])$ is $\{\pm 1\}^n$, so
  $\Delta(\rho(a^2))={\bf 1}_n$. By Proposition \ref{propalain}, the group $\rho(B)$ has the CSP.
		
  We show there is some congruence subgroup of $\rho(G_m)$ that is fully contained inside
  $\rho(B)$. 
  Since $\Delta(\rho(a))\ne {\bf 1}_n$, there is some $-1$ on the diagonal, suppose it appears at the
  $i^{\text{th}}$ position. Let $s$ be the element of $(\Z[1/m])^\times$ in the $i^{\text{th}}$ position in
  $t$. By Lemma \ref{lemmaoddordermodulo}, we find $M>1$, coprime with $m$, such that $s$ has odd order in
  $(\Z[1/m]/M\Z[1/m])^\times$.
		
  We note that if $\omega$ is a word with letters in $\{a,t\}$ representing an element of $G_m\setminus B$,
  then it contains an odd number of $a$'s. Hence, the element on the $i^{\text{th}}$ position on the diagonal
  of $\rho(\omega)$ is of the form $-s^r$ for some $r\in \ZZ$. If we suppose that
  $\rho(\omega) \in \rho(G_m)(M)$, then $s^r = -1 \mod M$.  This is impossible since $s$ has odd order in
  $(\Z[\frac{1}{m}]/M\Z[1/m])^\times$ and thus cannot have a power equal to $-1$. So $\rho(\omega) $ cannot be
  in the corresponding congruence subgroup, i.e. $\rho(G_m)(M)\subset\rho(B)$. By Lemma
  \ref{lemmasubgroupcontaining}, the result follows.
\end{proof}

We are ready to complement Proposition \ref{propalain}.

\begin{proposition}
  \label{CSPfinal}
  For any injective homomorphism $\rho: G_m\rightarrow T_n(\Z[1/m])$, the group $\rho(G_m)$ has CSP.
\end{proposition}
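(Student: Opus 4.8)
The plan is to reduce the general case to the situation already handled by Lemma \ref{positivetlemma}, and ultimately to Proposition \ref{propalain}, by peeling off two obstructions: matrices with negative diagonal entries in $\rho(t)$, and the possibility that $\Delta(\rho(a))={\bf 1}_n$. First I would dispose of the case $\Delta(\rho(a))={\bf 1}_n$: then $\rho(A_m)\subset U_n(\Z[1/m])$ directly (since $A_m$ is generated as a normal subgroup by $a$, and $U_n$ is normal in $T_n$), so $\rho$ is itself a special representation and Proposition \ref{propalain} applies verbatim. So assume $\Delta(\rho(a))\neq{\bf 1}_n$; by the computation in Lemma \ref{specialreps}(2) we know $\Delta(\rho(a))$ has order dividing $m-1$, hence (as the torsion of $D_n(\Z[1/m])$ is $\{\pm 1\}^n$) it is a diagonal sign matrix $\neq{\bf 1}_n$.

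Next I would remove the sign ambiguity in $\rho(t)$. Write $\Delta(\rho(t))=\mathrm{diag}(s_1,\dots,s_n)$ with $s_j\in(\Z[1/m])^\times$, and let $S=\{j : s_j<0\}$. The idea is to pass to the finite-index subgroup $B_0=\langle a, t^2\rangle$, which is again isomorphic to $BS(1,m)$ (indeed $t^2 a t^{-2} = a^{m^2}$, so $B_0\cong BS(1,m^2)$), on which $\rho$ restricts to a monomorphism with $\Delta(\rho(t^2))=\mathrm{diag}(s_1^2,\dots,s_n^2)$ having strictly positive diagonal. If $\Delta(\rho(a))\neq{\bf 1}_n$ still holds, Lemma \ref{positivetlemma} gives CSP for $\rho(B_0)$. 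I then need to see that $\rho(B_0)$ contains a congruence subgroup of $\rho(G_m)$, and that $G_m$ is covered by finitely many conjugates / cosets so that Lemma \ref{lemmasubgroupcontaining} applies. For the first point: an element of $G_m\setminus B_0$ is represented by a word with an odd number of $t$'s, so the $j$-th diagonal entry of its $\rho$-image has the form (up to the $\pm 1$ coming from $\Delta(\rho(a))$) $\pm s_j^{r}$ with $r$ odd for some $j$ with $s_j\neq 1$; choosing, via Lemma \ref{lemmaoddordermodulo}, an $M>1$ coprime to $m$ such that a suitable $s_j$ (one with $s_j\neq\pm 1$, or handling $s_j=-1$ by the argument already used in Lemma \ref{positivetlemma}) has odd order mod $M$ forces $\rho(G_m)(M)\subset\rho(B_0)$. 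The case where every $s_j\in\{\pm 1\}$ needs separate care: then $\rho(t)^2$ is unipotent, and since $\langle a, t^2\rangle$ has finite index, $\rho(G_m)$ is then virtually a subgroup of $U_n(\Z[1/m])\cdot D_n(\{\pm1\}^n)$, hence virtually nilpotent-by-finite — but $G_m$ is not virtually nilpotent, so this forces $n$ large enough that this degenerate case cannot occur, or more carefully one observes $t^2$ acting on $\rho(A_m)$ by $m^2$-th powers is incompatible with $\rho(t^2)$ unipotent unless $\rho(A_m)$ is trivial; I would need to check this dichotomy cleanly.

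Finally, to combine everything: if $\Delta(\rho(a^2))={\bf 1}_n$ but $\Delta(\rho(a))\neq{\bf 1}_n$ after passing to $B_0$, one uses instead the subgroup $\langle a^2, t^2\rangle$ and Proposition \ref{propalain} directly, then climbs back up through Lemma \ref{lemmasubgroupcontaining}(1) twice (once from $\langle a^2,t^2\rangle$ to $\langle a,t^2\rangle$ or $\langle a^2,t\rangle$, once more to $G_m$), at each stage exhibiting a congruence subgroup of the bigger group inside the smaller one via the odd-order trick of Lemma \ref{lemmaoddordermodulo}. I expect the main obstacle to be the bookkeeping in the degenerate case where $\Delta(\rho(t))$ has all entries in $\{\pm 1\}$ and $\Delta(\rho(a))\neq{\bf 1}_n$ simultaneously: there Lemma \ref{positivetlemma} does not apply after squaring $t$ because positivity is automatic but may coincide with $\Delta(\rho(t^2))={\bf 1}_n$, and one must argue directly that a congruence subgroup of $\rho(G_m)$ lands in a special subrepresentation using only the sign pattern of $\Delta(\rho(a))$ together with Lemma \ref{lemmaoddordermodulo} applied to the relevant diagonal entry of $\rho(t)$ — essentially re-running the proof of Lemma \ref{positivetlemma} without the positivity hypothesis, which should still work since that hypothesis was only used to ensure $\rho(B)$ itself (rather than a further index-two subgroup) is special.
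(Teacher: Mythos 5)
Your reduction of the case $\Delta(\rho(a))={\bf 1}_n$ to Proposition \ref{propalain}, and your use of Lemma \ref{positivetlemma} on $\langle a,t^2\rangle\simeq\BStwo$, both match the paper. But the bridge back from $\langle a,t^2\rangle$ to $G_m$ is where your plan breaks. You want to apply Lemma \ref{lemmasubgroupcontaining}(1), which requires exhibiting $M$ with $\rho(G_m)(M)\subset\rho(\langle a,t^2\rangle)$, and you propose to get this from Lemma \ref{lemmaoddordermodulo}. That cannot work as stated: the complement of $\langle a,t^2\rangle$ consists of elements whose $t$-exponent sum $d$ is odd, and their $j$-th diagonal entry is $\pm s_j^{d}$. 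The odd-order trick of Lemma \ref{positivetlemma} only excludes entries of the form $-s^{d}$ (since $s^{d}\equiv-1$ is impossible when $s$ has odd order); when the sign is $+$, an odd order $o$ of $s_j$ modulo $M$ gives $s_j^{o}\equiv 1$ with $o$ odd, so the diagonal entry does \emph{not} obstruct membership in the congruence subgroup. To exclude odd powers of $s_j$ you would need $s_j$ to have \emph{even} order modulo $M$ — the opposite of what Lemma \ref{lemmaoddordermodulo} provides, and no such lemma is available in the paper. Your closing paragraph, which proposes to ``climb back up'' through Lemma \ref{lemmasubgroupcontaining}(1) twice, inherits the same problem, and you yourself flag the degenerate cases as unresolved.

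The paper sidesteps this entirely with an idea absent from your plan: for $m$ odd it produces \emph{three} subgroups $B_1=\langle a^2,t\rangle$, $B_2=\langle a^2,at\rangle$, $B_3=\langle a,t^2\rangle$, each isomorphic to $\BS$ or $\BStwo$, and shows by a parity analysis of the normal form $t^{-k}a^rt^\ell$ that their set-theoretic union is all of $G_m$. Then $\rho|_{B_1}$ and $\rho|_{B_2}$ are special representations (Proposition \ref{propalain}), $\rho|_{B_3}$ satisfies Lemma \ref{positivetlemma}, and part (2) of Lemma \ref{lemmasubgroupcontaining} — the union version, which you never invoke — yields CSP for $\rho(G_m)$ without ever having to place a congruence subgroup of $G_m$ inside a single $B_i$. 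The missing ingredients in your proposal are precisely the subgroup $B_2=\langle a^2,at\rangle$, the covering claim $G_m=B_1\cup B_2\cup B_3$, and the use of Lemma \ref{lemmasubgroupcontaining}(2). (Incidentally, your worry about $\Delta(\rho(t))$ having all entries in $\{\pm1\}$ with $\Delta(\rho(a))\neq{\bf 1}_n$ can be settled directly: then $\langle a^2,t^2\rangle\simeq BS(1,m^4)$ would land in $U_n(\Z[1/m])$, which is nilpotent, contradicting injectivity — but this does not repair the main gap.)
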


\begin{proof}
  Since we already know the result for $m$ even (by Proposition \ref{propalain} and Lemma \ref{specialreps}),
  we may and will assume that $m$ is odd.

  The group $G_m$ has three subgroups $B_1=\langle a^2, t\rangle$, $B_2=\langle a^2, at\rangle$ and
  $B_3=\langle a,t^2\rangle$, which are all isomorphic to $\BS$ or $\BStwo$. We show that they cover
  $G_m$. Any element $\omega\in G_m$ can be described by a word of the form $t^{-k} a^r t^\ell$, with
  $k,\ell\in \NN$ and $r\in \ZZ$. Replacing $\omega$ by $\omega^{-1}$ if necessary, we may assume
  $k\leq \ell$.
  \begin{itemize}
  \item If $r$ is even, then $\omega \in B_1$;
  \item If $k$ and $l$ are both even or odd, then $\omega \in B_3$; indeed
    $\omega=t^{-2k}(t^k a^rt^{-k})t^{k+\ell}=t^{-2k}a^{rm^k}t^{k+\ell}\in B_3$ as $k+\ell$ is even.
  \item If $r$ is odd and $k,\ell$ do not have the same parity, then $\omega \in B_2$. Indeed
    \[t^{-k} a^r t^\ell = (t^{-1} a^{-1})^k a^{\sum_{i=0}^{k-1} m^i} a^r a^{-\sum_{i=0}^{\ell-1} m^i}
      (at)^l=(at)^{-k} a^{r -\sum_{i=k}^{\ell-1} m^i} (at)^\ell.\]
    Since $r,\ell-k$ and $m$ are odd, the exponent of $a$ in the latter expression is even, so $\omega\in B_2$.
		
  \end{itemize}
	
  Note that if $\Delta(\rho(a))= {\bf 1}_n$, then it is a special representation and Proposition
  \ref{propalain} applies. If it is not then the restrictions of $\rho$ to $B_1$ and $B_2$ are special
  representations, and the restriction of $\rho$ to $B_3$ satisfies the assumptions of Lemma
  \ref{positivetlemma}. Hence the $\rho(B_i)$'s all have CSP. Now simply applying Lemma
  \ref{lemmasubgroupcontaining} finishes the argument.
\end{proof}

\subsection{An alternative approach to CSP for $BS(1,p^\ell)$ ($p$ prime)}
We use the language of $\Q$-algebraic groups. Denoting by $\mathbb{G}_a$ (resp. $\mathbb{G}_m$) the additive
group (resp. the multiplicative group), we set $\mathbb{G}=\mathbb{G}_a\rtimes\mathbb{G}_m$, the affine group
viewed as a subgroup of $GL_2$ via the standard embedding.

Let $p_1,...,p_r$ be distinct primes, and let $S=\{p_1,...,p_r,\infty\}$ be viewed as a set of places of
$\Q$. Then the ring $\mathcal{O}_S$ of $S$-integers in $\Q$ is precisely
$$\mathcal{O}_S=\Z[1/p_1,...,1/p_r],$$
so that $\mathcal{O}_S^\times=\{\pm p_1^{k_1}...p_r^{k_r}:k_1,...,k_r\in\Z\}$ and
$$\mathbb{G}(\mathcal{O}_S)=\left\{\left(\begin{array}{cc}\pm p_1^{k_1}...p_r^{k_r} & a \\0 & 1\end{array}\right):k_1,...k_r\in\Z,\;a\in \Z[1/p_1,...,1/p_r]\right\}.$$
It is known that $\mathbb{G}(\mathcal{O}_S)$ satisfies CSP: see formula $(\ast\ast)$ on page 108 of
\cite{Raghunathan}.

Note that taking $r=1$, i.e $S=\{p.\infty\}$ and $\mathcal{O}_S=\Z[\frac{1}{p}]$, makes $BS(1,p^\ell)$ appear
as a finite index subgroup in $\mathbb{G}(\mathcal{O}_S)$, namely
$[\mathbb{G}(\mathcal{O}_S):BS(1,p^\ell)]= 2\ell$.

\begin{proposition}
  Let $\mathbb{H}\subset GL_n$ be a $\Q$-subgroup, and let $\rho:\mathbb{G}\rightarrow\mathbb{H}$ be a
  $\Q$-isomorphism. Then $\rho(BS(1,p^\ell))$ satisfies the CSP.
\end{proposition}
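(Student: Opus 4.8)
The plan is to reduce the statement to the known CSP for $\mathbb{G}(\mathcal{O}_S)$ with $S=\{p,\infty\}$, established via formula $(\ast\ast)$ on page 108 of \cite{Raghunathan}, by tracking how CSP behaves under the $\Q$-isomorphism $\rho$ and under commensurability. First I would recall that $BS(1,p^\ell)$ sits as a finite index subgroup of $\mathbb{G}(\mathcal{O}_S)$ with $[\mathbb{G}(\mathcal{O}_S):BS(1,p^\ell)]=2\ell$, so by Lemma \ref{lemmasubgroupcontaining}(1) it suffices to know that $\mathbb{G}(\mathcal{O}_S)$ has CSP as a subgroup of its ambient $GL_n(\Z[1/p])$ (and that $BS(1,p^\ell)$ contains a congruence subgroup of $\mathbb{G}(\mathcal{O}_S)$, which is immediate since both are commensurable and congruence subgroups of $\mathbb{G}(\mathcal{O}_S)$ have finite index). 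The content of $(\ast\ast)$ in \cite{Raghunathan} is precisely that the congruence kernel of $\mathbb{G}$ over $\mathcal{O}_S$ is trivial, i.e. every finite index subgroup of $\mathbb{G}(\mathcal{O}_S)$ contains a principal congruence subgroup.

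Next I would address the role of $\rho$ and of the abstract algebraic group $\mathbb{H}$. The key point is that CSP in the sense of the paper is an intrinsic property of an $S$-arithmetic group together with its ambient $GL_n$, but by a standard fact the congruence topology on $\mathbb{G}(\mathcal{O}_S)$ depends only on the $\Q$-algebraic group $\mathbb{G}$, not on the chosen faithful $\Q$-representation: any two $\Q$-embeddings of $\mathbb{G}$ into $GL_n$ and $GL_{n'}$ induce the same congruence topology, because a $\Q$-morphism of affine group schemes of finite type is defined over $\Z[1/d]$ for some integer $d$ and hence is continuous for the congruence topologies, with the same being true of its inverse when it is an isomorphism. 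Applying this with the $\Q$-isomorphism $\rho:\mathbb{G}\to\mathbb{H}$, the congruence topology on $\rho(\mathbb{G}(\mathcal{O}_S))$ as a subgroup of $GL_n$ coincides (up to commensurability of the defining congruence subgroups) with the one transported from $\mathbb{G}(\mathcal{O}_S)$. Therefore $\rho$ sends a neighbourhood basis of principal congruence subgroups to a neighbourhood basis of finite-index subgroups each containing a congruence subgroup of $\mathbb{H}\cap GL_n(\Z[1/p])$, and vice versa, so CSP for $\mathbb{G}(\mathcal{O}_S)$ transfers to CSP for $\rho(\mathbb{G}(\mathcal{O}_S))$.

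Finally I would combine the two reductions: $\rho(BS(1,p^\ell))$ is a finite index subgroup of $\rho(\mathbb{G}(\mathcal{O}_S))$, the latter has CSP by the transfer argument, and $\rho(BS(1,p^\ell))$ contains a congruence subgroup of $\rho(\mathbb{G}(\mathcal{O}_S))$ (being commensurable to it); then Lemma \ref{lemmasubgroupcontaining}(1) yields CSP for $\rho(BS(1,p^\ell))$, viewed inside $GL_n(\Z[1/p])$.

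I expect the main obstacle to be making precise and rigorous the claim that the congruence topology is a $\Q$-algebraic invariant independent of the representation — that is, checking that a $\Q$-isomorphism of algebraic groups is bicontinuous for congruence topologies. This requires a clean statement that every $\Q$-morphism between affine $\Q$-groups extends to a morphism of group schemes over $\mathcal{O}_{S'}$ for a possibly larger finite $S'$, so that reduction modulo an ideal commutes with $\rho$ up to enlarging levels; once that bookkeeping of primes (those dividing $p$ versus the finitely many extra primes in the denominators of $\rho$ and $\rho^{-1}$) is handled, the rest is a routine application of the two parts of Lemma \ref{lemmasubgroupcontaining} together with the cited CSP for $\mathbb{G}(\mathcal{O}_S)$.
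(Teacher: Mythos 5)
Your proposal is correct and follows essentially the same route as the paper: it rests on CSP for $\mathbb{G}(\mathcal{O}_S)$ from Raghunathan together with the fact that the $\Q$-isomorphism $\rho$ carries $S$-congruence subgroups to $S$-congruence subgroups, which is exactly the representation-independence of the congruence topology you sketch (the paper simply cites Lemma 3.1.1(ii) of Chapter I of Margulis for this). One small correction: your closing appeal to Lemma \ref{lemmasubgroupcontaining}(1) is in the wrong direction, since that lemma promotes CSP from a finite-index subgroup up to the ambient group, whereas you need the (easier) descent; but this is immediate, because any finite-index $L\le\rho(BS(1,p^\ell))$ has finite index in $\rho(\mathbb{G}(\mathcal{O}_S))$, hence contains some $\rho(\mathbb{G}(\mathcal{O}_S))(M)\supset(\rho(BS(1,p^\ell)))(M)$, which is how the paper concludes in a single chain of inclusions.
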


\begin{proof}
  Let $K$ be a finite index subgroup in $\rho(BS(1,p^\ell))$. Since $\mathbb{G}(\mathcal{O}_S)$ satisfies the
  CSP, we find $N>0$ such that $\mathbb{G}(\mathcal{O}_S)(N)\subset \rho^{-1}(K)$. By Lemma 3.1.1.(ii) in
  Chapter I of \cite{MargulisBook}, 
  the subgroup $\rho(\mathbb{G}(\mathcal{O}_S)(N))$ is an $S$-congruence subgroup in
  $\mathbb{H}(\mathcal{O}_S)$, i.e. it contains $\mathbb{H}(\mathcal{O}_S)(M)$ for some $M>1$. Then
  $(\rho(BS(1,p^\ell)))(M)\subset \mathbb{H}(\mathcal{O}_S)(M)\subset
  \rho(\mathbb{G}(\mathcal{O}_S)(N))\subset K$.
\end{proof}

\bibliographystyle{plainnat}
\bibliography{bib.bib}

\end{document}